\newcounter{minutes}\setcounter{minutes}{\time}
\newcounter{hours}\setcounter{hours}{\time}
\newcommand{\comment}[1]{}
\newtheorem{theorem}[equation]{Theorem}
\newtheorem{lemma}[equation]{Lemma}
\newtheorem{corollary}[equation]{Corollary}
\newtheorem{proposition}[equation]{Proposition}
\theoremstyle{definition}
\newtheorem{definition}[equation]{Definition}
\newtheorem{example}[equation]{Example}
\newcommand{\beq}{\begin{equation}}
\newcommand{\eeq}{\end{equation}}
\newtheorem{remark}[equation]{Remark}
\numberwithin{equation}{section}
\newcommand{\RN}{\mathbb{R}^n}
\newcommand{\B}{{\mathbb{B}}}
\newcommand{\C}{{\mathbb{C}}}
\newcommand{\N}{{\mathbb{N}}}
\newcommand{\R}{{\mathbb{R}}}
\renewcommand{\Re}{{\rm Re}}       
\def\NABLA#1{{\mathop{\nabla\kern-.5ex\lower1ex\hbox{$#1$}}}}
\def\Nabla#1{\nabla\kern-.5ex{}_{#1}}
\def\Tabla#1{\Tilde\nabla\kern-.5ex{}_{#1}}
\renewcommand{\Tilde}{\widetilde}
\newtheorem{nonsec}[equation]{}
\begin{document}

\def\thefootnote{}
\footnotetext{ \texttt{\tiny File:~\jobname .tex,
          printed: \number\year-\number\month-\number\day,
          \thehours.\ifnum\theminutes<10{0}\fi\theminutes}
} \makeatletter\def\thefootnote{\@arabic\c@footnote}\makeatother

\title{On Quasi-inversions}

\author{David Kalaj}
\address{University of Montenegro, faculty of natural sciences and mathematics,
Cetinjski put b.b. 81000, Podgorica, Montenegro}
\email{davidk@t-com.me}

\author{Matti Vuorinen}
\address{Department of Mathematics and Statistics, University of Turku, 20014
Turku, Finland} \email{vuorinen@utu.fi}

\author{Gendi Wang}
\address{Department of Mathematics and Statistics, University of Turku, 20014
Turku, Finland} \email{genwan@utu.fi}

\maketitle

\begin{abstract}
Given a bounded domain $D \subset  {\mathbb R}^n$ strictly starlike with
respect to $0 \in D\,,$ we define a quasi-inversion w.r.t. the boundary
$\partial D \,.$ We show that the quasi-inversion is bi-Lipschitz w.r.t.
the chordal metric if and only if every "tangent line" of $\partial D$
is far away from the origin. Moreover,  the bi-Lipschitz constant tends
to $1,$ when $\partial D$ approaches the unit sphere in a suitable way.
For the formulation of our results we use the concept of
the $\alpha$-tangent condition due to F. W. Gehring and J. V\"ais\"al\"a
(Acta Math. 1965). This condition is shown to be equivalent to the bi-Lipschitz and quasiconformal extension property of what we call the polar parametrization of  $\partial D$.
In addition, we show that the polar parametrization, which is a mapping of the unit sphere onto $\partial D\,,$ is bi-Lipschitz if and only if $D$ satisfies the $\alpha$-tangent condition.
\end{abstract}

{\small \sc Keywords.} {Quasi-inversion, starlike domain, $\alpha$-tangent condition}

{\small \sc 2010 Mathematics Subject Classification.}{26A16(30C62, 51F99, 53A04)}
\comment{
 \makeatletter
\renewcommand*\subjclass[2][2000]{%
  \def\@subjclass{#2}%
  \@ifundefined{subjclassname@#1}{%
    \ClassWarning{\@classname}{Unknown edition (#1) of Mathematics
      Subject Classification; using '1991'.}%
  }{%
    \@xp\let\@xp\subjclassname\csname subjclassname@#1\endcsname
  }%
}
 \makeatother}

\section{Introduction}

M\"obius transformations in $\overline{\R}^n=\R\cup\{\infty\}$ can be defined as mappings of the form $F=h_1\circ\cdots\circ h_m$, where each $h_j,\,j=1,\cdots,m$, is either a reflection in a hyperplane of $\R^n$ or an inversion in a sphere $S^{n-1}(a,r)=\{x\in\RN: |x-a|=r\}$, a mapping of the form \cite{ah2, be}
\beq\label{rh}
x\mapsto h(x)= a+\frac{r^2(x-a)}{|x-a|^2},\,\,\,\,x\in\R^n\setminus\{a\},
\eeq
and $h(a)=\infty$, $h(\infty)=a$, where $a\in\R^n$ and $r>0$.
M\"obius transformations have an important role in the study of the hyperbolic geometry of the unit ball $\mathbb{B}^n$ or of the upper half space
 $\mathbb{H}^n=\{(x_1,\cdots,x_n)\in\RN: x_n>0\}$ because the hyperbolic metric remains invariant under M\"obius automorphisms of the corresponding space. This
 invariance property immediately follows from the characterizing property of M\"obius transformations \cite[p. 32, Theorem 3.2.7]{be}:
the absolute ratio of every quadruple stays invariant under a M\"obius transformation $f$ 
\beq
|f(a),f(b),f(c),f(d)|=|a,b,c,d|.
\eeq
One of the basic facts is the distance formula for the mapping $h$ in (\ref{rh}) \cite{ah2},\cite[3.1.5]{be}, \cite[1.5]{vu}:
\beq\label{hf}
|h(x)-h(y)|=\frac{r^2|x-y|}{|x-a||y-a|}.
\eeq

Because M\"obius transformations are conformal maps they also have a role in the theory of quasiconformal maps which is the motivation of the present study. Our starting point is to define a quasi-inversion with respect to the boundary $\mathcal{M}=\partial D$ of a bounded domain $D\subset\RN$, strictly starlike w.r.t. the origin.  Given a point $x\in\R^n\setminus\{0\}$, consider the ray $L_x=\{tx: t>0\}$ and write  $r_x=|w|$ where $w$ is the unique point in $L_x\cap\mathcal{M}$. Then $r_x=r_{sx}$ for all $s>0$. We define the quasi-inversion in $\mathcal{M}$ by
\beq
x\mapsto f_{\mathcal{M}}(x)=\frac{r^2_x}{|x|^2 }x,\,\,\,\,x\in\R^n\setminus\{0\},
\eeq
and $f_{\mathcal{M}}(0)=\infty$, $f_{\mathcal{M}}(\infty)=0$ (e.g. see Figure \ref{qi}). Then $f_{\mathcal{M}}(x)=x$ for $x\in {\mathcal{M}}$ and $f_{\mathcal{M}}(D)=\overline{\R}^n\setminus\overline{D}$.

\begin{figure}[ht]
\includegraphics[height=5.2cm]{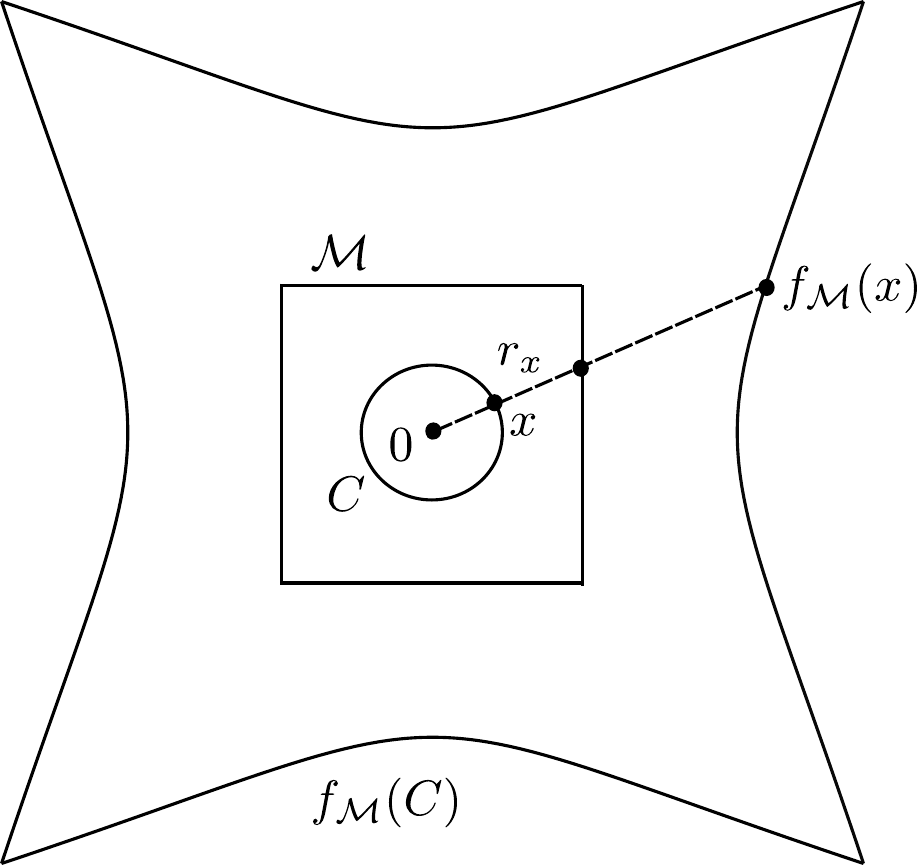}
\caption{\label{qi} The image $f_{\mathcal{M}}(C)$
of the circle $C=S^1(0,1/2)$ under the quasi-inversion $f_{\mathcal{M}}\,.$
Here $\mathcal{M}$ is the boundary of the square centered at $0$ with side length $2\,.$ }
\end{figure}
 Obviously the quasi-inversion in $S^{n-1}(0,r)$ coincides with the inversion $h$ in (\ref{rh}) when $a=0$. Therefore it is a natural question to study which properties of inversions hold for quasi-inversions, too.

Perhaps the simplest question is to investigate under which conditions on $\partial D$ we have a counterpart of the identify (\ref{hf}) in the form of an inequality, with a constant depending on $\partial D$. This is the content of our first main result, formulated as Theorem \ref{khh}. Secondly, it is a basic fact that the inversion in (\ref{rh}) with $a=0$ and $r=1$ is a 1-bi-Lipschitz (isometry) mapping w.r.t. the chordal metric $q$ (see Section 2). The result here for quasi-inversions is formulated as Theorem~\ref{qinv1}. Third, because the inversion $h$ in $\partial \mathbb{B}^n$ transforms the points $x,\,y\in\mathbb{B}^n$ to $h(x),\,h(y)\in\overline{\R}^n\setminus\overline{\mathbb{B}}^n$ with the equal hyperbolic distances $\rho_{\mathbb{B}^n}(x,y)=\rho_{\overline{\R}^n\setminus\overline{\mathbb{B}}^n}(h(x),h(y))$
we may look for a similar result in the quasi-inversion case. This question requires finding a suitable counterpart of the hyperbolic metric for a strictly starlike domain $D$ and its complementary domain $\overline{\R}^n\setminus\overline{D}$. Presumably several metrics could be used here. In our third main result we generalize this to the case of a M\"obius invariant metric introduced by P. Seittenranta, see Theorem \ref{qinv2}.
Moreover, several other results will be proved. It should be pointed out that the idea of quasi-inversion (without the name) was used by M. Fait, J. G. Krzyz, and J. Zygmunt in \cite{fkz}.
Some of the notions we use are close to the study of F. W. Gehring and
J. V\"ais\"al\"a \cite{gv} concerning quasiconformal mapping problems in $\R^3$. We refine some of these results and also a later result by
O. Martio and U. Srebro \cite{mar2}.

For a nonempty subset  $\mathcal{A}\subset\R^n\setminus\{0\}\,,$
the radial projection $\Pi: \mathcal{A}\rightarrow S^{n-1}$ is defined by
\beq\label{rpx}
\Pi(x)=x/|x|.
\eeq
Let $\mathcal{M}\subset\RN$ be
now the boundary of a domain in $\R^n$,  strictly starlike w.r.t. the origin.
Then the inverse function of $\Pi:\mathcal{M}\rightarrow S^{n-1}$ is denoted by
$\varphi: S^{n-1}\rightarrow\mathcal{M}$. Further, we define the radial
extension of $\varphi$ by
\beq\label{re}
\varphi_a: x\mapsto|x|^a\varphi(x/|x|),\,\,\,\,x\in\R^n\setminus\{0\},
\eeq
and $\varphi_a(0)=0$, $\varphi_a(\infty)=\infty$, where $a>0$. Note that for $\mathcal{M}= S^{n-1}$ this mapping is the standard radial stretching \cite[p. 49]{vais}.
The main result of
Section~3 is Theorem~\ref{multi}, which states roughly that the radial projection of $\mathcal{M}$ is bi-Lipschitz if and only if the domain bounded by $\mathcal{M}$ satisfies the $\alpha$-tangent condition, and the bi-Lipschitz constant depends only on $\alpha$ and the distance from origin to $\mathcal{M}$. Our results imply the unexpected fact that the global bi-Lipschitz constant of the radial projection of a hyper-surface onto the unit sphere is equal to the maximal value of the local bi-Lipschitz constant. The main result of Section~4 is Theorem~\ref{star1} which says that $\varphi_1$ is bi-Lipschitz if and only if the domain bounded by $\mathcal{M}$ satisfies the $\alpha$-tangent condition, with an explicit bi-Lipschitz constant. Moreover, $\varphi_a$ is quasiconformal if and only if the domain bounded by $\mathcal{M}$ satisfies the $\alpha$-tangent condition.
Further, we give an explicit constant of quasiconformality in terms of the angle $\alpha\in(0,\pi/2]$ and $a$.
We finish the paper by proving that the quasi-inversion in $\mathcal{M}$ is a $K$-quasiconformal mapping with $K=\cot^2\frac{\alpha}{2}$, see Theorem \ref{qinv3} in Section~5.

\section{Preliminaries}
Let $B^n(x,r)$ be the ball centered at $x$ with radius $r>0$ and $S^{n-1}(x,r)$
its boundary sphere. We abbreviate
$B^n(r)= B^n(0,r),\, S^{n-1}(r)= S^{n-1}(0,r),\, {\mathbb B}^n = B^n(1),\, S^{n-1}= S^{n-1}(1)$.
Let $[a,b]$ be the the segment with endpoints $a,\,b$. 

\begin{nonsec}{\bf Dilatations.}
Let $D,D'$ be subdomains of $\R^n$ and $f:D\to D'$ be a differentiable
homeomorphism and denote its Jacobian by $J(x,f)\,, x\in D.$
If $x\in D $ and $J(x,f)\neq 0\,,$ then the derivative of $f$ at
$x\in D$ is a bijective linear mapping $f'(x):\RN\rightarrow \RN$  and we denote
\begin{eqnarray}\label{lineardil}
H_I(f'(x))=\frac{|J(x,f)|}{\lambda_f(x)^n},\ \ \ H_O(f'(x))=\frac{\Lambda_f(x)^n}{|J(x,f)|},\ \ \ H(f'(x))=\frac{\Lambda_f(x)}{\lambda_f(x)},
\end{eqnarray}
where  $$\Lambda_f(x):=\max\{|f'(x)h|: |h|=1\}\,\,\, {\rm and } \,\, \, \, \lambda_f(x):=\min\{|f'(x)h|: |h|=1\}.$$
Sometimes instead of $\Lambda_f(x)$ we use notation $|f'(x)|$, to denote the norm of the matrix $A=f'(x)$. If $\lambda_1^2\le \dots\le \lambda_n^2$ ($\lambda_i>0, i=1,2,\cdots,n$) are eigenvalues of the symmetric matrix $AA^t$ where $A^t$ is the adjoint of $A$, then we have the following well-known formulas
\begin{equation}\label{formula}
|J(x,f)|=\prod_{k=1}^n\lambda_k,\ \  \ \Lambda_f(x)=\lambda_n,\ \ \  \lambda_f(x)=\lambda_1.
\end{equation}

By \eqref{lineardil} and \eqref{formula}, we arrive at the following simple inequalities \cite[14.3]{vais}
\begin{equation}\label{io1}
H(f'(x))\le \min\{H_I(f'(x)), H_O(f'(x))\}\le H(f'(x))^{n/2}.
\end{equation}
\begin{equation}\label{io2}
 H(f'(x))^{n/2}\le \max\{H_I(f'(x)),H_O(f'(x))\}\le H(f'(x))^{n-1}.
\end{equation}

The quantities
$$K_I(f)=\sup_{x\in D}H_I(f'(x)),\ \ \  K_O(f)=\sup_{x\in D}H_O(f'(x))$$
are called the inner and outer dilatation of $f$, respectively.
The maximal dilatation of $f$ is
$$K(f)=\max\{K_I(f),K_O(f)\}.$$
\end{nonsec}

\begin{nonsec}{\bf Quasiconformal mappings.}
In the literature, see e.g. \cite{car}, we can find various definitions of quasiconformality which are equivalent.
The following analytic definition for quasiconformal mappings is from \cite[Theorem 34.6]{vais}:
a homeomorphism $f: D\rightarrow D'$ is $C$-quasiconformal if and only if the following conditions are satisfied: (i) $f$ is ACL; (ii) $f$ is differentiable a.e.; (iii) $\Lambda_f(x)^n/C\le|J(x,f)|\le C\lambda_f(x)^n$ for a.e. $x\in D$. By \cite[Theorem 34.4]{vais}, if $f$ satisfies the conditions (i), (ii) and $J(x,f)\neq 0$ a.e., then
$$K_I(f)={\rm ess}\sup_{x\in D}H_I(f'(x)),\ \ \  K_O(f)={\rm ess}\sup_{x\in D}H_O(f'(x)).$$
Hence (iii) can be written as $K(f)\le C$ which by \eqref{io2} is equivalent to
\beq\label{k1}
H(f'(x))\le K \text{ for a.e. $x\in D$}.
\eeq
Here the constant $K\le C^{2/n}$.
In this paper we say that a quasiconformal mapping $f: D\to D'$ is $K$-quasiconformal if $K$ satisfies \eqref{k1}.

It is important to notice that  $f$ is $K$-quasiconformal if and only if $f^{-1}$ is $K$-quasiconformal and that the composition of $K_1$ and $K_2$ quasiconformal mappings is  $K_1K_2$-quasiconformal. (
It is well-known that this also holds for $K$-quasiconformality
in V\"ais\"al\"a's sense, see \cite[Corollary 13.3, Corollary 13.4]{vais}).

Recall that for the case of planar differentiable mapping $f$, we have
\beq\label{lamd1}
\Lambda_f(z)=|f_z|+|f_{\bar{z}}|=|f'(z)|,
\eeq
\beq\label{lamd2}
\lambda_f(z)=\left||f_z|-|f_{\bar{z}}|\right|=\frac{1}{|(f^{-1})'(z)|}.
\eeq
Hence the condition \eqref{k1} can be written as
$$
|\mu_f(z)|\le k \,\,{\rm a.e.\, on}\,\,D\,\,\,{\rm where}\, \,\,k=\frac{K-1}{K+1}
$$
and $\mu_f(z)= f_{\bar{z}}/f_z$ is the complex dilatation of $f$.
Sometimes instead of $K$-quasiconformal we write $k$-quasiconformal.
\end{nonsec}

\begin{nonsec}{\bf Lipschitz mappings.}
 Let $(X,d_X)$ and $(Y,d_Y)$ be metric spaces. Let $f: X\rightarrow Y$ be continuous. We say that $f$ is $L$-Lipschitz, if
\begin{eqnarray}\label{l1}
d_Y(f(x),f(y))\leq L d_X(x,y),\,\,{\rm for}\,\, x,\,y\in X,
\end{eqnarray}
and $L$-bi-Lipschitz, if $f$ is a homeomorphism and
\begin{eqnarray}\label{l2}
d_X(x,y)/L\leq d_Y(f(x),f(y))\leq L d_X(x,y),\,\,{\rm for}\,\,x,\,y\in X.
\end{eqnarray}
A $1$-bi-Lipschitz mapping is called an isometry. The smallest $L$ for which \eqref{l1} holds will be denoted by ${\rm Lip}(f)$.

Let $f: D\rightarrow fD$ be a Lipschitz map, where $D, fD\subset \R^n$
and $D$ is convex. By the Mean value theorem, we have the following simple fact
\beq\label{bil}
{\rm Lip}(f)=\mathrm{ess}\sup_{x\in D}|f'(x)|.
\eeq
Recall that Lipschitz maps are a.e. differentiable by the Rademacher-Stepanov theorem \cite[p. 97]{vais}.
\end{nonsec}

\begin{nonsec}{\bf Starlike domains.}
A bounded domain $D\subset\RN$ is said
to be {\it strictly starlike w.r.t. the point $a$} if each ray emanating
from $a$ meets $\partial D$ at exactly one point.
\end{nonsec}

\begin{nonsec}{\bf $\alpha$-tangent condition.} Suppose $D\subset\RN$ is a strictly
starlike domain w.r.t. the origin and $x\in\partial D$.
For each $z\in\partial D$, $z\neq x$, we let $\alpha(z,x)$
denote the acute angle which the segment $[z,x]$ makes with the ray from $0$ through $x$, and we define
$$\alpha(x)=\liminf_{z\rightarrow x}\alpha(z, x)\in [0,\pi/2].$$
If $\partial D$  has a tangent hyperplane at $x$ whose normal forms an acute angle $\theta$ with the ray from $0$ through $x$, then $$\alpha(x)=\pi/2-\theta.$$
We say a domain $D$ (and its boundary $\partial D$) satisfies the {\it $\alpha$-tangent condition} if for every $x\in\partial D$ we have $\alpha(x)\ge \alpha\in(0,\pi/2]$. If we do not specify $\alpha$ then we simply say  that $D$ satisfies the {\it tangent condition}.

F. W. Gehring and J. V\"ais\"al\"a \cite[Theorem 5.1]{gv} studied the dilatations of a homeomorphism in terms of the above $\alpha$-tangent condition (without the name).
\end{nonsec}

\begin{nonsec}{\bf $\beta$-cone condition {\rm ( Martio and Srebro \cite{mar2})}.}
Let $D\subset\RN$ be a bounded domain with $0\in D$ and let $\beta\in(0,\pi/4]$.
We say that $D$ satisfies the {\it $\beta$-cone condition} if the open cone
$$C(x,\beta):=\{z\in\R^n:|z-x|<|x|,
\left<x-z,x\right>>|x-z||x|\cos \beta\}$$
with vertex $x$ and central angle $\beta$ lies in $D$ whenever $x\in\partial D$. Note that if $D$ satisfies the $\beta$-cone condition, then $D$ is strictly starlike.
\end{nonsec}


\begin{proposition}\label{stat}
A domain $D$ satisfies the $\beta$-cone condition if and only if it satisfies the $\alpha$-tangent condition.
\end{proposition}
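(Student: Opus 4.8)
The plan is to prove both implications by translating each geometric condition into an angle comparison at a boundary point $x\in\partial D$, and then comparing the two angles directly.

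\textbf{From the $\beta$-cone condition to the $\alpha$-tangent condition.} Fix $x\in\partial D$ and suppose $C(x,\beta)\subset D$. I want to show $\alpha(x)\ge\beta$, i.e. that for every sequence $z_j\to x$ with $z_j\in\partial D$ we have $\liminf_j\alpha(z_j,x)\ge\beta$. The key observation is that since $C(x,\beta)\subset D$, no boundary point $z\ne x$ can lie in the closed cone $\overline{C(x,\beta)}$ near $x$; more precisely, a point $z$ with $|z-x|<|x|$ and $\langle z-x,x\rangle>|z-x||x|\cos\beta$ lies in $D$, hence is not on $\partial D$. Reversing the inequality, every $z\in\partial D$ with $z\ne x$ and $|z-x|<|x|$ satisfies $\langle x-z,x\rangle\le|x-z||x|\cos\beta$, which says exactly that the angle between $x-z$ and $x$ is at least $\beta$. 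But that angle is (a candidate for) $\alpha(z,x)$: one has to check that the acute angle between the segment $[z,x]$ and the ray $\R_{+}x$ equals the angle between $x-z$ and $x$ (it does, since $x$ and $-x$ span the same line and the convention takes the acute choice; as $z\to x$ one also checks this angle is automatically $\le\pi/2$). Taking $\liminf$ over $z\to x$ gives $\alpha(x)\ge\beta$, so the $\alpha$-tangent condition holds with the same constant $\alpha=\beta$.

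\textbf{From the $\alpha$-tangent condition to the $\beta$-cone condition.} Now suppose $\alpha(x)\ge\alpha$ for all $x\in\partial D$; I claim $D$ satisfies the $\beta$-cone condition with $\beta=\alpha$ (or possibly a slightly smaller explicit angle if the open/closed or ``acute angle'' conventions force a loss — this is a point to watch). Suppose not: then there is $x\in\partial D$ and a point $y\in C(x,\beta)$ with $y\notin D$. Consider the segment $[x,y]$: it starts at the boundary point $x$, and since $C(x,\beta)$ is open and convex with vertex $x$, a whole initial sub-segment $(x,x+\eps(y-x)]$ lies in $C(x,\beta)$. If all of $[x,y]$ avoided $D$ we would already have boundary points arbitrarily close to $x$ inside the open cone; if instead the segment enters $D$, then by connectedness it must cross $\partial D$ at some point $z\in[x,y]\cap\partial D$, and this $z$ can be taken as close to $x$ as we like by shrinking $y$ toward $x$ along the cone (rescaling $y$ to $x+t(y-x)$, $t\to0$). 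Either way we produce a sequence $z_j\in\partial D$, $z_j\to x$, lying inside the closed cone $\overline{C(x,\beta)}$, so that the angle between $x-z_j$ and $x$ is $\le\beta$. Hence $\alpha(z_j,x)\le\beta$ for all $j$, forcing $\alpha(x)=\liminf_{z\to x}\alpha(z,x)\le\beta=\alpha$. Since $\alpha(x)\ge\alpha$, we get $\alpha(x)=\alpha$, which is fine, but to get a genuine contradiction I should start from a strict inequality: pick $\beta<\alpha$, run the same argument, and deduce $\alpha(x)\le\beta<\alpha$, contradicting the hypothesis. This shows $C(x,\beta)\subset D$ for every $\beta<\alpha$, and a limiting/closure argument (or a direct check that the supremum is attained) upgrades this to $\beta=\alpha$.

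\textbf{Main obstacle.} The only real subtlety is bookkeeping about the two different angle conventions: $\alpha(z,x)$ is defined as the \emph{acute} angle between the segment $[z,x]$ and the \emph{ray} from $0$ through $x$, while the cone $C(x,\beta)$ is defined via $\langle z-x,x\rangle>|z-x||x|\cos\beta$, which measures the (not necessarily acute) angle between $z-x$ and the vector $x$ itself. Near $x$, the vectors $z-x$ point ``inward,'' i.e. roughly toward $-x$, so one must be careful whether the relevant angle is measured against $x$ or $-x$, and whether it is automatically acute. I would settle this once and for all with a short lemma: for $z\in\partial D$ close to $x$, $\alpha(z,x)$ equals the angle in $[0,\pi/2]$ between the line $\R x$ and the line $\R(z-x)$, and $z\in C(x,\beta)\iff$ the angle between $x-z$ and $x$ is $<\beta$ (with $|z-x|<|x|$) — then the two implications above become the clean monotone comparison sketched. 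The strict-inequality trick ($\beta<\alpha$, then pass to the limit) handles the boundary case cleanly and avoids fussing over open versus closed cones.
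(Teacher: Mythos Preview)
Your forward implication ($\beta$-cone $\Rightarrow$ $\alpha$-tangent with $\alpha=\beta$) matches the paper's, which simply declares it obvious.

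The reverse implication has a genuine gap. From a single point $y\in C(x,\beta)\setminus D$ you try to manufacture boundary points $z_j\in\partial D\cap \overline{C(x,\beta)}$ with $z_j\to x$, but neither case of your dichotomy delivers this. In Case~1 (the open segment $(x,y]$ misses $D$) the points of $(x,y]$ near $x$ lie in $\R^n\setminus D$, not necessarily in $\partial D$; you would still need to connect them, inside the cone, to a nearby point of $D$ to locate a boundary point. In Case~2 the crossing point $z\in[x,y]\cap\partial D$ may sit far from $x$, and your ``shrinking'' move $y\mapsto y_t=x+t(y-x)$ fails because nothing prevents $y_t\in D$ for all small $t$: it is perfectly consistent with your hypotheses that $C(x,\beta)\cap B(x,\eps)\subset D$ while $C(x,\beta)$ still contains points of $\R^n\setminus D$ farther out. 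Since $\alpha(x)$ is only a $\liminf$ as $z\to x$, a boundary point in the cone that stays bounded away from $x$ yields no contradiction with the $\alpha$-tangent condition at $x$. The strict-inequality trick with $\beta<\alpha$ does not repair this; the problem is producing the nearby boundary points at all, not the endpoint $\beta=\alpha$.

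The paper proceeds differently and proves only the qualitative equivalence, without claiming $\beta=\alpha$. For each $x$ it argues by contradiction that \emph{some} $\beta_x>0$ works: if not, pick $a_n\in C(x,1/n)\cap\partial D$ (such points exist since the cone contains both the axis $(0,x)\subset D$ and a point outside $D$), pass to a subsequential limit $a\in\partial D$, and note that shrinking the \emph{cone angle} to $0$ forces $a\in(0,x]$. Then $a=x$ gives $\alpha(x)=0$, while $a\in(0,x)$ contradicts strict starlikeness. A second compactness argument on $\partial D$ upgrades the pointwise $\beta_x$ to a uniform $\beta$. The essential difference is that the paper shrinks the angle and uses starlikeness to exclude the ``far'' limit on the axis, whereas you shrink the distance along a cone of fixed angle, which gives no control on where $\partial D$ is actually met.
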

\begin{proof}
It is obvious that $D$ satisfies the $\alpha$-tangent condition with $\alpha=\beta$ if $D$ satisfies the $\beta$-cone condition.

We want to show that if $D$ satisfies the $\alpha$-tangent condition, then
$D$ satisfies the $\beta$-cone condition.
First, fix $x\in\partial D$, we prove that there exists a constant
$\beta_x\in(0,\pi/4]$, such that $C(x,\beta_x)\subset D$. If not, then
for all $ n\in\N$, there exists a sequence $ a_n\in C(x, 1/n)\cap \partial D$.
Since $\partial D$ is compact there is a subsequence still denoted by $\{a_n\}$
converging to a limiting point $a\in (0,x]\cap\partial D$. If $a=x$,
then $\alpha(x)\le\lim\inf_{n\rightarrow \infty}\alpha(a_n,x)=0$
which contradicts the $\alpha$-tangent condition.
If $a\in(0,x)$, a contradiction with  the starlikeness of $D$ follows.

Next, we prove that there exists a uniform constant $\beta\in (0,\pi/4]$ for all $x\in\partial D$, such that $C(x,\beta)\subset D$. Suppose not,
then for all $n\in\N$, there exists $x_n\in\partial D$ such that $C(x_n, 1/n)\cap \partial D\neq\emptyset$. So there exist a sequence of points $(y_n)$ from the boundary, with $y_n\neq x_n$ such that $\alpha(x_n,y_n)$ tends to zero when $n$ tends to infinity. By \cite[Eq.~5.11]{gv} we infer that the radial projection $\Pi$ of $\partial D$ onto the unit sphere $S$ is a bi-Lipschitz mapping (see section~3). Let $\varphi$ be its inverse, then there is a constant $L\ge 1$ such that \begin{equation}\label{lipip}
\frac{|\varphi(\eta)-\varphi(\zeta)|}{|\eta-\zeta|}\le L,\ \ \  \eta,\zeta\in S.
\end{equation}
 Then $\varphi(x)=r(x) x$ for some positive continuous function $r$ in $S$. Let $r_{\min}$ and $r_{\max}$ be the minimum and maximum of $r$ respectively. Then we have $x_n=\varphi(\eta_n)$, $y_n=\varphi(\zeta_n)$, for some $x_n,y_n\in S^{n-1}$ and we obtain
    $$\cos\alpha(x_n,y_n)=\frac{\left<\varphi(\eta_n)-\varphi(\zeta_n),\varphi(\eta_n)\right>}{|\varphi(\eta_n)-\varphi(\zeta_n)|\cdot|\varphi(\eta_n)|}.$$ Thus
\[   \begin{split}\cos^2\alpha(x_n,y_n)-1&
=\frac{\left<\varphi(\eta_n)-\varphi(\zeta_n),\varphi(\eta_n)\right>^2}{|\varphi(\eta_n)-\varphi(\zeta_n)|^2\cdot|\varphi(\eta_n)|^2}-1
\\&=\frac{(r^2(\eta_n)-r(\eta_n)r(\zeta_n)\left<\eta_n,\zeta_n\right>)^2}{|\varphi(\eta_n)-\varphi(\zeta_n)|^2\cdot r^2(\eta_n)}-1\\&
\frac{r(\eta_n)^2r(\zeta_n)^2(\left<\eta_n,\zeta_n\right>^2-1)}{r^2(\eta_n)|\varphi(\eta_n)-\varphi(\zeta_n)|^2}\\&\le \frac{r(\eta_n)^2r(\zeta_n)^2(\left<\eta_n,\zeta_n\right>^2-1)}{2r^2(\eta_n)L^2(1-\left<\eta_n,\zeta_n\right>)}\le -r^2_{\min}/L^2
\end{split}\]

Thus $\cos\alpha(x_n,y_n)\le \sqrt{1-r^2_{\min}/L^2}$ which is contradiction, because $\cos\alpha(x_n,y_n)$ tends to $1$.
  This proves our proposition. 
\end{proof}

\begin{remark}\label{alfabeta}
It can be proved that if the domain satisfies the $\beta$-cone condition almost everywhere (or on some dense subset of the  boundary), then it satisfies the $\beta$-cone condition everywhere. On the other hand there exists a domain satisfying the $\alpha$-tangent condition almost everywhere but not everywhere.
For example, consider the domain $D$ bounded by the graph of the
Cantor step function $\mathcal{C}$ and the following segments
$[T,A]$, $[A,B]$, $[B,C]$, and $[C,O]$, where $T=(1,1)$, $A=(2,1)$, $B=(2,-1)$, $C=(0,-1)$, $O=(0,0)$  (Figure \ref{ct}). The domain
$D$ is strictly starlike w.r.t. the point
$S=(1,-1/2)$ and satisfies the $\alpha$-tangent condition almost everywhere but not
the $\beta$-cone condition almost everywhere. A point where the cone condition is ruined is
$T=(1,1)$ together with some points in its neighborhood.
The size of the neighborhood depends on a given positive number $\beta$.
\end{remark}

\begin{figure}[htp]\label{1}
\centering
\includegraphics[width=.32 \textwidth]{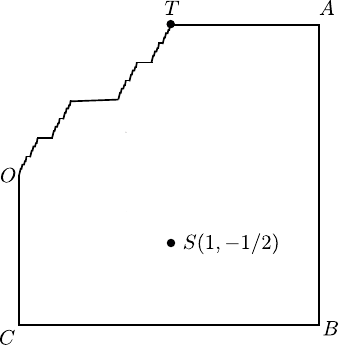}
\caption{\label{ct} A domain satisfies the $\alpha$-tangent condition almost everywhere with $\alpha=\arctan1/2$, but not everywhere.}
\end{figure}

\begin{nonsec}{\bf The chordal metric.}
The chordal metric is defined by
\beq\label{q}
\left\{\begin{array}{ll}
q(x,y)=\frac{|x-y|}{\sqrt{1+|x|^2}\sqrt{1+|y|^2}},&\,\,\, x, y\neq\infty,\\
q(x,\infty)=\frac{1}{\sqrt{1+|x|^2}},&\,\,\, x\neq\infty.
\end{array}\right.
\eeq
\end{nonsec}

\begin{nonsec}{\bf M\"obius metric {\rm (Seittenranta \cite{s2})}.}
Let $G$ be an open subset of $\overline{\R}^n$ with card $\partial G\geq 2$. For all $x,\,y\in G$, the
M\"obius (or absolute ratio) metric $\delta_G$ is defined as
\begin{eqnarray*}
\delta_G(x,y)=\log (1+\sup_{a,b\in\partial G}|a,x,b,y|)\,,\quad
|a,x,b,y|= \frac{|a-b||x-y|}{|a-x||b-y|} \,.
\end{eqnarray*}
It is a well-known basic fact that $\delta_G$ agrees
with the hyperbolic metric both in the case of the unit ball as well
as in the case of the half space (cf. \cite[8.39]{vu}).
\end{nonsec}

\begin{nonsec}{\bf Ferrand's metric.}
Let $G\subset\overline{{\mathbb R}}^n\,$  be a domain
with card $\partial G\geq 2$. We define a continuous density function
$$w_G(x)=\sup_{a,b\in\partial G}\frac{|a-b|}{|x-a||x-b|}\,,\,\,\,\,x\in G\setminus\{\infty\},$$
and a metric $\sigma$ in $G$,
$$\sigma_G(x,y)=\inf_{\gamma\in \Gamma}\int_{\gamma}w_G(t)|dt|,$$
where $\Gamma$ is the family of all rectifiable curves joining $x$
and $y$ in $G$.
\end{nonsec}

\begin{proposition}\label{pr}
 The chordal metric $q$ is invariant under the inversion in the unit sphere. The M\"obius metric $\delta_G$ and Ferrand's metric $\sigma_G$
are M\"obius-invariant.
\end{proposition}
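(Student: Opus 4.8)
\textbf{Proof proposal for Proposition~\ref{pr}.}

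The plan is to treat the three assertions separately, since they rest on rather different mechanisms. For the invariance of the chordal metric $q$ under the inversion $h(x)=x/|x|^2$ in the unit sphere (with $h(0)=\infty$, $h(\infty)=0$), I would argue by direct computation using the distance formula \eqref{hf}. With $a=0$ and $r=1$ we have $|h(x)-h(y)|=|x-y|/(|x|\,|y|)$, and one also has the elementary identities $1+|h(x)|^2=1+1/|x|^2=(1+|x|^2)/|x|^2$. Substituting these into the definition \eqref{q} gives
\[
q(h(x),h(y))=\frac{|x-y|/(|x|\,|y|)}{\sqrt{(1+|x|^2)/|x|^2}\,\sqrt{(1+|y|^2)/|y|^2}}=\frac{|x-y|}{\sqrt{1+|x|^2}\sqrt{1+|y|^2}}=q(x,y),
\]
and the cases involving $0$ and $\infty$ follow by the same substitution or by continuity. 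This step is purely mechanical.

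For the M\"obius metric $\delta_G$, the key observation is that the absolute (cross) ratio $|a,x,b,y|$ appearing in its definition is itself a M\"obius invariant: for any M\"obius transformation $f$ one has $|f(a),f(x),f(b),f(y)|=|a,x,b,y|$, which is exactly the characterizing property quoted in the Introduction \cite[p.~32, Theorem 3.2.7]{be}. Given a M\"obius transformation $f$ with $f(G)=G'$, one has $f(\partial G)=\partial G'$, so
\[
\sup_{a',b'\in\partial G'}|a',f(x),b',f(y)|=\sup_{a,b\in\partial G}|f(a),f(x),f(b),f(y)|=\sup_{a,b\in\partial G}|a,x,b,y|,
\]
and therefore $\delta_{G'}(f(x),f(y))=\delta_G(x,y)$. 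The only point needing care is that $f$ maps $\partial G$ bijectively onto $\partial G'$ (true because $f$ is a homeomorphism of $\overline{\R}^n$) and that the supremum is genuinely transported; both are routine.

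For Ferrand's metric $\sigma_G$, I would show that the density transforms correctly under M\"obius maps and that this makes the line integral invariant. Writing $f$ for a M\"obius transformation with $f(G)=G'$, the identity \eqref{hf} (applied locally to the pieces of $f$, or directly for an inversion, and trivially for isometries) gives, for $x\in G$ and $a,b\in\partial G$,
\[
\frac{|f(a)-f(b)|}{|f(x)-f(a)|\,|f(x)-f(b)|}=\frac{|a-b|}{|x-a|\,|x-b|}\cdot\frac{1}{|f'(x)|_{\text{conf}}},
\]
where $|f'(x)|_{\text{conf}}$ denotes the (isotropic) conformal stretching factor of $f$ at $x$; hence $w_{G'}(f(x))=w_G(x)/|f'(x)|_{\text{conf}}$. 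Taking the supremum over $a,b\in\partial G=f^{-1}(\partial G')$ preserves this, so $w_{G'}(f(x))\,|f'(x)|_{\text{conf}}=w_G(x)$. Then for any rectifiable curve $\gamma$ joining $x,y$ in $G$, the curve $f\circ\gamma$ joins $f(x),f(y)$ in $G'$ and, by the change-of-variables formula for line integrals of densities under a conformal map,
\[
\int_{f\circ\gamma}w_{G'}(t)\,|dt|=\int_\gamma w_{G'}(f(s))\,|f'(s)|_{\text{conf}}\,|ds|=\int_\gamma w_G(s)\,|ds|.
\]
Since $\gamma\mapsto f\circ\gamma$ is a bijection between the admissible curve families for $(x,y)$ in $G$ and for $(f(x),f(y))$ in $G'$, taking infima yields $\sigma_{G'}(f(x),f(y))=\sigma_G(x,y)$.

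The main obstacle is bookkeeping rather than depth: one must phrase "M\"obius invariance'' correctly as invariance under the full group, reduce to the generators (reflections in hyperplanes, which are Euclidean isometries, and inversions in spheres, for which \eqref{hf} applies), and verify that the boundary $\partial G$ is carried to $\partial G'$ so that the suprema match up. For $\sigma_G$ one additionally needs the conformality of M\"obius maps to justify the change of variables in the length integral; for $\delta_G$ no differentiability is needed since the cross-ratio identity is exact. None of these steps is hard, but care with the $\infty$ cases and with the fact that $f$ permutes $\partial G$ is essential.
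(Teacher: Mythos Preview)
Your proof is correct in all three parts: the direct computation for $q$, the cross-ratio argument for $\delta_G$, and the density/change-of-variables argument for $\sigma_G$ are each standard and properly executed. Note, however, that the paper does not actually supply a proof of this proposition---it is stated as a known fact (the M\"obius invariance of $\delta_G$ and $\sigma_G$ is part of why these metrics were introduced, cf.\ Seittenranta \cite{s2} and Ferrand), so there is nothing in the paper to compare your argument against; your write-up fills in details the authors deliberately omitted.
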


\begin{lemma}\cite[Theorem 3.16]{s2}\label{delta}
Let $G\,,G'\subsetneq\RN$ be open sets and $f: G\rightarrow G'$ an
$L$-bi-Lipschitz map w.r.t. the Euclidean metric. Then $f$ is an
$L^4$-bi-Lipschitz map  w.r.t. the M\"obius  metric.
\end{lemma}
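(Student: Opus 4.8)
The plan is to reduce the statement to the elementary behaviour of the absolute ratio under maps that distort Euclidean distances by at most a fixed factor, together with a boundary‑extension step.

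\medskip

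\noindent\textbf{Step 1: Extend $f$ to the closures.}
Since $f$ is $L$-Lipschitz it is uniformly continuous, hence extends to a continuous map $\bar f\colon\overline G\to\overline{G'}$; by continuity of the Euclidean metric the two inequalities in \eqref{l2} persist for $\bar f$, so $\bar f$ is again $L$-bi-Lipschitz and in particular injective. Surjectivity of $f\colon G\to G'$ together with the lower bound in \eqref{l2} (which makes Cauchy sequences of $G'$-points pull back to Cauchy sequences in $\overline G$) gives $\bar f(\overline G)=\overline{G'}$. Thus $\bar f$ is a bi-Lipschitz bijection of the closures restricting to the homeomorphism $f$ on $G$, so it carries $\partial G=\overline G\setminus G$ bijectively onto $\partial G'=\overline{G'}\setminus G'$ (in particular $\mathrm{card}\,\partial G'=\mathrm{card}\,\partial G\ge 2$, so $\delta_{G'}$ is defined). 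If $G$ is unbounded one also sets $\bar f(\infty)=\infty$, which is consistent since $|f(x)|\to\infty$ as $|x|\to\infty$ by the lower bound in \eqref{l2}.

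\medskip

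\noindent\textbf{Step 2: Estimate the absolute ratio.}
Fix $x,y\in G$ and $a,b\in\partial G$, and put $a'=\bar f(a)$, $b'=\bar f(b)$, $x'=f(x)$, $y'=f(y)$. Estimating the four Euclidean distances in the absolute ratio separately via \eqref{l2}, namely $|a'-b'|\le L|a-b|$, $|x'-y'|\le L|x-y|$, $|a'-x'|\ge |a-x|/L$ and $|b'-y'|\ge |b-y|/L$, and multiplying, we get
\[
|a',x',b',y'|=\frac{|a'-b'|\,|x'-y'|}{|a'-x'|\,|b'-y'|}\le L^{4}\,|a,x,b,y|.
\]
Since $\bar f$ maps $\partial G$ onto $\partial G'$ bijectively, taking the supremum over $a,b\in\partial G$ yields $\sup_{a',b'\in\partial G'}|a',x',b',y'|\le L^{4}\sup_{a,b\in\partial G}|a,x,b,y|$. (When $a$ or $b$ equals $\infty$ the corresponding degenerate ratio obeys the same chain of inequalities with factor $L^{2}\le L^{4}$.)

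\medskip

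\noindent\textbf{Step 3: Pass to $\delta$ and symmetrize.}
Use the elementary inequality $\log(1+ct)\le c\log(1+t)$ for $t\ge 0$ and $c\ge 1$ (both sides vanish at $t=0$, and $c/(1+ct)\le c/(1+t)$ since $c\ge 1$). Applying it with $c=L^{4}$ and $t=\sup_{a,b\in\partial G}|a,x,b,y|$ gives $\delta_{G'}(f(x),f(y))\le L^{4}\delta_G(x,y)$. Applying the same argument to $f^{-1}$, which is again $L$-bi-Lipschitz, gives $\delta_G(x,y)\le L^{4}\delta_{G'}(f(x),f(y))$, which is the missing lower bound. The only genuinely delicate point is Step 1 — in particular verifying that $\bar f$ sends $\partial G$ onto $\partial G'$, and the bookkeeping for ratios involving $\infty$ when one works in $\overline{\R}^n$; the distance comparisons in Steps 2 and 3 are routine.
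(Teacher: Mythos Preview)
The paper does not prove this lemma; it is quoted verbatim from Seittenranta \cite[Theorem~3.16]{s2} and used as a black box. Your argument is correct and is exactly the natural one: extend the bi-Lipschitz bijection to $\overline G$ so that it carries $\partial G$ onto $\partial G'$, estimate each of the four Euclidean distances in the absolute ratio to pick up the factor $L^4$, and then use $\log(1+ct)\le c\log(1+t)$ for $c\ge 1$ to pass to $\delta$. Your bookkeeping in Step~1 (injectivity from the lower Lipschitz bound, surjectivity onto $\overline{G'}$ via pulled-back Cauchy sequences, hence $\bar f(\partial G)=\partial G'$, and $\bar f(\infty)=\infty$ in the unbounded case) is the only nontrivial part and you have handled it correctly.
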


\begin{lemma}\cite[Theorem 3.18]{s2}\label{sigma}
Let $G\,,G'\subsetneq\RN$ be domains and $f: G\rightarrow G'$ an
$L$-bi-Lipschitz map w.r.t. the Euclidean metric. Then $f$ is an
$L^4$-bi-Lipschitz map  w.r.t. Ferrand's metric.
\end{lemma}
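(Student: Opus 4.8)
\emph{Sketch of proof.} The plan is to use the standard two-step scheme for metrics given by a conformal-type density: first compare the densities $w_{G'}\circ f$ and $w_G$ up to a power of $L$, then compare the $w$-lengths of curves with the $w$-lengths of their images. A preliminary observation is that an $L$-bi-Lipschitz homeomorphism $f\colon G\to G'$ is uniformly continuous, hence extends to a map $\overline f\colon\overline G\to\overline{G'}$ which (by passing to limits in the two-sided Lipschitz inequalities) is again $L$-bi-Lipschitz, in particular injective, surjective onto $\overline{G'}$, and a homeomorphism; since it carries $G$ onto $G'$ it carries $\partial G=\overline G\setminus G$ bijectively onto $\partial G'=\overline{G'}\setminus G'$. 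When $G$ is unbounded one checks surjectivity of $\overline f$ onto $\overline{G'}$ directly: for $y\in\partial G'$ any sequence $y_k\in G'$ with $y_k\to y$ has preimages $f^{-1}(y_k)$ forming a Cauchy sequence whose limit lies in $\partial G$ and is mapped by $\overline f$ to $y$. Below I write $a'=f(a)$, $b'=f(b)$ for $a,b\in\partial G$.

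The first step is the density estimate $w_{G'}(f(x))\le L^{3}\,w_G(x)$ for $x\in G$. For $a,b\in\partial G$ the bi-Lipschitz inequalities give $|a'-b'|\le L|a-b|$, $|f(x)-a'|\ge|x-a|/L$ and $|f(x)-b'|\ge|x-b|/L$, so that
\[
\frac{|a'-b'|}{|f(x)-a'|\,|f(x)-b'|}\ \le\ L^{3}\,\frac{|a-b|}{|x-a|\,|x-b|}\ \le\ L^{3}\,w_G(x).
\]
As $(a,b)$ ranges over $\partial G\times\partial G$, the pair $(a',b')$ ranges over all of $\partial G'\times\partial G'$ by the boundary correspondence above, so taking the supremum on the left yields the estimate.

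The second step is the curve comparison. Let $\gamma$ join $x$ and $y$ in $G$, parametrized by arc length on $[0,\ell]$. Then $f\circ\gamma$ joins $f(x)$ and $f(y)$ in $G'$ and is rectifiable (its length is at most $L\ell$), and since $f$ is $L$-Lipschitz the a.e.-defined derivative satisfies $|(f\circ\gamma)'(s)|\le L$. Combining this with the density estimate,
\[
\int_{f\circ\gamma} w_{G'}(t)\,|dt|\ \le\ L\int_0^\ell w_{G'}(f(\gamma(s)))\,ds\ \le\ L^{4}\int_0^\ell w_G(\gamma(s))\,ds\ =\ L^{4}\int_\gamma w_G(t)\,|dt|.
\]
Taking the infimum over all such $\gamma$ gives $\sigma_{G'}(f(x),f(y))\le L^{4}\sigma_G(x,y)$. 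Since $f^{-1}\colon G'\to G$ is again $L$-bi-Lipschitz, the same argument applied to $f^{-1}$ gives $\sigma_G(x,y)\le L^{4}\sigma_{G'}(f(x),f(y))$, and the two inequalities together say exactly that $f$ is $L^{4}$-bi-Lipschitz with respect to Ferrand's metric.

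The computation is routine; the step needing the most care is the curve comparison, namely the two facts that a rectifiable curve maps under $f$ to a rectifiable curve and that $|(f\circ\gamma)'|\le L|\gamma'|$ a.e. These rest on the a.e.\ differentiability of Lipschitz maps (Rademacher-Stepanov) and the chain rule along $\gamma$, the only subtlety being the bookkeeping of exceptional null sets. The boundary correspondence $\partial G\leftrightarrow\partial G'$ is the other point one should not skip, since it is precisely what makes the supremum defining $w_{G'}(f(x))$ transform correctly under $f$.
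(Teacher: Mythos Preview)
The paper does not supply its own proof of this lemma; it is quoted verbatim from Seittenranta \cite[Theorem~3.18]{s2}. Your two-step argument (density comparison via the boundary correspondence, then curve-length comparison using $|(f\circ\gamma)'|\le L$ a.e.) is the standard route and is correct in outline.

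There is one point you should tighten. In the paper's definition of $w_G$, the boundary $\partial G$ is taken in $\overline{\R^n}$, so if $G$ is unbounded then $\infty\in\partial G$; since a bi-Lipschitz map preserves unboundedness, the same holds for $G'$. Your boundary correspondence, however, is set up only between the \emph{Euclidean} closures $\overline G$ and $\overline{G'}$, so the supremum you bound on the left is a priori only over finite pairs $a',b'\in\partial G'\cap\R^n$, which may be strictly smaller than $w_{G'}(f(x))$. The missing case $a'=\infty$, $b'$ finite is easy---interpreting the quotient as the limit $1/|f(x)-b'|$ one gets $1/|f(x)-b'|\le L/|x-b|\le L\,w_G(x)$ by taking $a=\infty$ on the right---but it does need to be said for the density estimate to be complete.
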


Without a proof, the following lemma is given in \cite[1.34]{s1}.

\begin{lemma}\label{lees}
Let $f: \overline{\R}^n\rightarrow\overline{\R}^n$ with $f(0)=0$ and $f(\infty)=\infty$.

(1) If $f|_{\RN}$ is an $L$-bi-Lipschitz map w.r.t.
the Euclidean metric, then $f$ is an $L^3$-bi-Lipschitz
map w.r.t. the chordal metric.

(2) If $f$ is an $L$-bi-Lipschitz map w.r.t.
the chordal metric, then $f|_{\RN}$ is an
$L^3$-bi-Lipschitz map w.r.t. the Euclidean metric.
\end{lemma}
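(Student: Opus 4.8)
The crux of both parts is the same: one needs to compare the ``chordal weight'' $\sqrt{1+|f(x)|^2}$ with $\sqrt{1+|x|^2}$. Once this comparison is in hand, the defining formula \eqref{q} expresses each chordal distance as a ratio (numerator $|x-y|$, two denominator factors), and the desired bi-Lipschitz estimate splits into a product of three one-sided estimates of this type --- which is exactly where the exponent $3$ appears. In part (1) the weight comparison will be extracted from the normalization $f(0)=0$ and the Euclidean bi-Lipschitz bound; in part (2) it will be extracted from the normalization $f(\infty)=\infty$ and the chordal bi-Lipschitz bound applied to pairs of the form $(x,\infty)$.

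For part (1), from $f(0)=0$ and \eqref{l2} we get $|x|/L\le|f(x)|\le L|x|$ for all $x\in\RN$, and since $L\ge1$ this yields
\[
\tfrac{1}{L}\sqrt{1+|x|^2}\le\sqrt{1+|f(x)|^2}\le L\sqrt{1+|x|^2},\qquad x\in\RN,
\]
using $1+|x|^2/L^2\ge(1+|x|^2)/L^2$ and $1+L^2|x|^2\le L^2(1+|x|^2)$. For two finite points I would then substitute the Euclidean two-sided bound for $|f(x)-f(y)|$ into the numerator of $q(f(x),f(y))$ and the displayed inequalities into the two denominator factors; multiplying the three estimates gives $q(x,y)/L^3\le q(f(x),f(y))\le L^3q(x,y)$. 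For a pair $(x,\infty)$ the displayed inequalities give directly $q(x,\infty)/L\le q(f(x),\infty)\le Lq(x,\infty)$, which is even stronger, and the pair $(\infty,\infty)$ is trivial. One also observes that $|f(x)|\to\infty$ as $|x|\to\infty$, so $f$ is indeed a homeomorphism of $\overline{\R}^n$ for the chordal metric.

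For part (2), I would begin by applying the chordal bi-Lipschitz hypothesis to the pair $(x,\infty)$, using $f(\infty)=\infty$ and \eqref{q}:
\[
\frac{1}{L}\cdot\frac{1}{\sqrt{1+|x|^2}}\le\frac{1}{\sqrt{1+|f(x)|^2}}\le\frac{L}{\sqrt{1+|x|^2}},
\]
which after taking reciprocals is exactly the same weight comparison $\tfrac{1}{L}\sqrt{1+|x|^2}\le\sqrt{1+|f(x)|^2}\le L\sqrt{1+|x|^2}$ as in part (1). Then, for finite $x,y$, I would use the identity $|f(x)-f(y)|=q(f(x),f(y))\sqrt{1+|f(x)|^2}\sqrt{1+|f(y)|^2}$ coming from \eqref{q}, together with the analogous expression for $|x-y|$, and bound the three factors of the first product against the corresponding factors of the second --- the chordal bi-Lipschitz bound handles the $q$-factor, the weight comparison handles the other two. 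This gives $|x-y|/L^3\le|f(x)-f(y)|\le L^3|x-y|$, and since $f(\infty)=\infty$ forces $f(\RN)=\RN$, this is the claim.

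I expect no serious obstacle: the argument is essentially bookkeeping with \eqref{q}. The two points that need a little care are (i) verifying that the weight comparison costs only a single factor $L$ --- this uses $L\ge1$ in an essential way to absorb the additive $1$ in $\sqrt{1+t^2}$ --- and (ii) handling the point $\infty$ symmetrically in both directions, so that $f$ (resp.\ $f|_{\RN}$) is seen to be a homeomorphism of the relevant space before the metric inequalities are invoked.
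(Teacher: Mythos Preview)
Your proposal is correct and follows essentially the same route as the paper's proof: in both parts the ratio $q(f(x),f(y))/q(x,y)$ (respectively $|f(x)-f(y)|/|x-y|$) is written as a product of three factors, the weight comparison $\sqrt{1+|x|^2}/L\le\sqrt{1+|f(x)|^2}\le L\sqrt{1+|x|^2}$ is obtained from the normalization $f(0)=0$ (via $|f(x)|\le L|x|$) in part~(1) and from the normalization $f(\infty)=\infty$ (via the chordal bound on the pair $(x,\infty)$) in part~(2), and each factor contributes one power of $L$. The paper phrases part~(2) using the identity $q(x,\infty)=1/\sqrt{1+|x|^2}$ rather than the weight notation, and obtains the lower bounds by applying the upper-bound argument to $f^{-1}$, but these are cosmetic differences only.
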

\begin{proof}
(1) If $f:\R^n\rightarrow\R^n$ is an $L$-bi-Lipschitz mapping w.r.t.
the Euclidean metric and $f(0)=0$, then
\beq\label{fL} \frac 1L |x-y|\leq |f(x)-f(y)|\leq L|x-y| \eeq and
\beq\label{fLinfty} \frac 1L |x|\leq |f(x)|\leq L|x|.
\eeq
For $x,y\in\R^n$ and $x\neq y$, we have
$$\frac{q(f(x),f(y))}{q(x,y)}=
\frac{|f(x)-f(y)|}{|x-y|}\cdot\sqrt{\frac{1+|x|^2}{1+|f(x)|^2}}
\cdot\sqrt{\frac{1+|y|^2}{1+|f(y)|^2}},$$
by (\ref{fL}) and (\ref{fLinfty}), and hence
$$\frac{q(f(x),f(y))}{q(x,y)}\leq L^3.$$
If $y=\infty$ and $x\neq y$, then
$$\frac{q(f(x),f(\infty))}{q(x,\infty)}=\sqrt{\frac{1+|x|^2}{1+|f(x)|^2}}\leq L.$$
Applying the above argument to $f^{-1}$, we easily get
$$\frac{q(f^{-1}(x),f^{-1}(y))}{q(x,y)}\le {L^3}$$
and hence
$$\frac{q(f(x),f(y))}{q(x,y)}\ge \frac 1{L^3}.$$

(2) If $f: \overline\R^n\rightarrow\overline\R^n$ is an $L$-bi-Lipschitz mapping w.r.t.
the chordal metric and $f(\infty)=\infty$, then \beq\label{qfL}
\frac 1L q(x,y)\leq q(f(x),f(y))\leq L q(x,y) \eeq
and
\beq\label{qfLinfty}
\frac 1L q(x,\infty)\leq q(f(x),f(\infty))\leq
L q(x,\infty).
\eeq
For $x,y\in\R^n$ and $x\neq y$, we have
$$\frac{|f(x)-f(y)|}{|x-y|}=\frac{q(f(x),f(y))}{q(x,y)}\cdot\frac{q(x,\infty)}{q(f(x),f(\infty))}\cdot\frac{q(y,\infty)}{q(f(y),f(\infty))},$$
by (\ref{qfL}) and (\ref{qfLinfty}), and hence
$$\frac{|f(x)-f(y)|}{|x-y|}\leq L^3.$$
Applying the above argument to $f^{-1}$, we finally get
$$\frac{|f(x)-f(y)|}{|x-y|}\geq\frac 1{L^3}.$$
This completes the proof.
\end{proof}



\section{The radial projection to the unit sphere}
Let $\mathrm{dist}(\mathcal{A},a)$ be the distance from the point $a$ to the set $\mathcal{A}$. The radial projection in \eqref{rpx} maps a point $z\in \RN\setminus\{0\}$ to
$$\Pi(z)=z^*; \,\,\,z^*=z/|z|.$$
In this section, we mainly study the Lipschitz properties of this projection.

J. Luukkainen and J. V\"ais\"al\"a obtained
$$|\Pi(x)-\Pi(y)|\le |x-y|/\sqrt{|x||y|},$$
for $x, y\in \RN\setminus\{0\}$(\cite[Lemma 2.12]{lv}). The following lemma improves this inequality.

\begin{lemma}\label{pole}
For two distinct points $x, y\in \RN\setminus\{0\}$, there holds
\begin{equation}\label{bine}
\frac{|x-y|}{|x^*-y^*|}\ge \frac{|x|+|y|}{2},
\end{equation}
 the equality is attained if and only if $|x|=|y|$.
\end{lemma}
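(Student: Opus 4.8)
The plan is to restrict attention to the $2$-plane spanned by $x$ and $y$ and reduce the claim to an elementary one-variable inequality. Set $r=|x|$, $s=|y|$ and let $\theta\in[0,\pi]$ be the angle between the vectors $x$ and $y$, so that $x^{*}\cdot y^{*}=\cos\theta$. If $\theta=0$ then $x^{*}=y^{*}$, the quotient in \eqref{bine} is to be read as $+\infty$, and there is nothing to prove; hence we may assume $\theta\in(0,\pi]$. The law of cosines gives
\[
|x-y|^2=r^2+s^2-2rs\cos\theta,\qquad |x^{*}-y^{*}|^2=2-2\cos\theta .
\]
Consequently \eqref{bine} is equivalent to $2\bigl(r^2+s^2-2rs\cos\theta\bigr)\ge (r+s)^2\bigl(1-\cos\theta\bigr)$.

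Next I would simply expand and collect terms. Writing $c=\cos\theta$, a direct computation shows that the difference of the two sides equals
\[
2r^2+2s^2-4rsc-(r+s)^2(1-c)=(r-s)^2+c\,(r-s)^2=(1+c)(r-s)^2 .
\]
Since $c=\cos\theta\ge -1$, this quantity is nonnegative, which proves \eqref{bine}. (A slightly more conceptual variant of the same computation: for fixed value of $r+s$ and of $c$, the numerator $r^2+s^2-2rs\,c=(r+s)^2-2rs(1+c)$ is, because $1+c\ge 0$, minimal exactly when the product $rs$ is maximal, i.e.\ when $r=s$; substituting $r=s=(r+s)/2$ into the quotient yields precisely $\bigl((r+s)/2\bigr)^2$, which explains the role of the condition $|x|=|y|$.)

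For the equality statement, \eqref{bine} becomes an equality precisely when $(1+\cos\theta)(|x|-|y|)^2=0$; unless $x$ and $y$ point in opposite directions (i.e.\ $\cos\theta=-1$) this forces $|x|=|y|$, and conversely $|x|=|y|$ gives equality for every $\theta\in(0,\pi]$. There is no genuine obstacle in this argument; the only points requiring a word of care are the degenerate case $x^{*}=y^{*}$ (disposed of at the outset) and keeping track of the equality discussion.
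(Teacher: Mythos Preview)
Your proof is correct and follows essentially the same route as the paper: both reduce to the plane through $0,x,y$ and compute directly. The paper writes $x=pe^{is}$, $y=qe^{it}$ and obtains
\[
\left(\frac{|x-y|}{|x^*-y^*|}\right)^2-\left(\frac{|x|+|y|}{2}\right)^2=\frac14(p-q)^2\cot^2\!\Bigl(\frac{t-s}{2}\Bigr),
\]
which is the same identity as your $(1+\cos\theta)(r-s)^2$ after dividing by $|x^*-y^*|^2=2(1-\cos\theta)$ and using $\frac{1+\cos\theta}{1-\cos\theta}=\cot^2(\theta/2)$.

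Your equality discussion is in fact more careful than the paper's: you correctly observe that equality in \eqref{bine} also occurs when $x$ and $y$ point in opposite directions ($\theta=\pi$), regardless of whether $|x|=|y|$. The paper's own formula confirms this (since $\cot(\pi/2)=0$), so the stated ``if and only if $|x|=|y|$'' is slightly incomplete; the full characterization is $|x|=|y|$ \emph{or} $x^*=-y^*$.
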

\begin{proof} Without loss of generality we may assume that $0$, $x$ and $y$ are three distinct complex numbers.
Let $x=pe^{is}$ and $y=qe^{it}$ where $p,q>0$, $s,t\in[0,2\pi]$, and $s\neq t$. Then
$$
\left(\frac{|x-y|}{|x^*-y^*|}\right)^2-\left(\frac{|x|+|y|}{2}\right)^2=\frac{1}{4} (p - q)^2 \cot^2\left(\frac{t-s}{2}\right)\ge 0,
$$
which implies the inequality \eqref{bine}. The equality case is clear.
\end{proof}

By Lemma~\ref{pole} we immediately have the following corollaries.
\begin{corollary}\label{co111}
Let $\mathcal{A}$ be any nonempty subset of $\RN$ with $\mathrm{dist}(\mathcal{A},0)>0$. For two distinct points $x, y\in \mathcal{A}$ we have
$$\frac{|x-y|}{|x^*-y^*|}\ge \mathrm{dist}(\mathcal{A},0).$$
\end{corollary}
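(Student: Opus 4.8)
The statement to prove is Corollary~\ref{co111}, which asserts that for a nonempty set $\mathcal{A}\subset\RN$ with $\mathrm{dist}(\mathcal{A},0)>0$ and two distinct points $x,y\in\mathcal{A}$, one has $|x-y|/|x^*-y^*|\ge \mathrm{dist}(\mathcal{A},0)$.

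\medskip

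The plan is to deduce this directly from Lemma~\ref{pole}. That lemma gives, for any two distinct points $x,y\in\RN\setminus\{0\}$, the sharp inequality
\[
\frac{|x-y|}{|x^*-y^*|}\ge \frac{|x|+|y|}{2}.
\]
So the first step is simply to observe that if $x,y\in\mathcal{A}$, then $|x|\ge \mathrm{dist}(\mathcal{A},0)$ and $|y|\ge \mathrm{dist}(\mathcal{A},0)$, since $\mathrm{dist}(\mathcal{A},0)=\inf_{z\in\mathcal{A}}|z|$ and both $x$ and $y$ lie in $\mathcal{A}$. In particular $\mathrm{dist}(\mathcal{A},0)>0$ guarantees $x,y\neq 0$, so that $x^*=x/|x|$ and $y^*=y/|y|$ are well defined and Lemma~\ref{pole} applies.

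\medskip

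The second step is to combine these: from the averaging bound,
\[
\frac{|x-y|}{|x^*-y^*|}\ge \frac{|x|+|y|}{2}\ge \frac{\mathrm{dist}(\mathcal{A},0)+\mathrm{dist}(\mathcal{A},0)}{2}=\mathrm{dist}(\mathcal{A},0),
\]
which is exactly the asserted inequality. There is no real obstacle here — the content is entirely in Lemma~\ref{pole}, and the corollary is just the specialization obtained by bounding the arithmetic mean $(|x|+|y|)/2$ from below by the common lower bound $\mathrm{dist}(\mathcal{A},0)$ on $|x|$ and $|y|$. If one wants, one can remark that the inequality in the corollary is in general not sharp unless $\mathcal{A}$ is, say, a sphere centered at $0$ containing both $x$ and $y$ (so that equality in Lemma~\ref{pole} forces $|x|=|y|=\mathrm{dist}(\mathcal{A},0)$), but establishing that refinement is not needed for the statement as given.
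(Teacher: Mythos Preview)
Your proof is correct and follows essentially the same approach as the paper, which simply states that the corollary follows immediately from Lemma~\ref{pole}. You have merely made explicit the trivial step of bounding $(|x|+|y|)/2$ below by $\mathrm{dist}(\mathcal{A},0)$.
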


\begin{corollary}\label{co1}
Let $\mathcal{A}$ be any nonempty subset of $\RN$ with $\mathrm{dist}(\mathcal{A},0)>0$. Let $\Pi:\mathcal{A}\rightarrow S^{n-1}$ be the radial projection. Then $\Pi$ is Lipschitz continuous and
\begin{equation}\label{clos}\mathrm{Lip}(\Pi)\le  \frac{1}{\mathrm{dist}(\mathcal{A},0)}.
\end{equation}
\end{corollary}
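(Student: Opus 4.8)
The plan is to deduce Corollary~\ref{co1} directly from Corollary~\ref{co111}. The radial projection $\Pi:\mathcal{A}\to S^{n-1}$ sends $x\mapsto x^*=x/|x|$, and the Lipschitz estimate \eqref{l1} for $\Pi$ with respect to the Euclidean metric on both $\mathcal{A}\subset\RN$ and $S^{n-1}\subset\RN$ is precisely the assertion that $|x^*-y^*|\le L|x-y|$ for all $x,y\in\mathcal{A}$ and some constant $L$.

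First I would fix two distinct points $x,y\in\mathcal{A}$; since $\mathrm{dist}(\mathcal{A},0)>0$ we have $x,y\in\RN\setminus\{0\}$, so Lemma~\ref{pole} and hence Corollary~\ref{co111} apply and give
\[
\frac{|x-y|}{|x^*-y^*|}\ge \mathrm{dist}(\mathcal{A},0)>0.
\]
Rearranging, $|x^*-y^*|\le \dfrac{1}{\mathrm{dist}(\mathcal{A},0)}\,|x-y|$. (If $x=y$ the inequality $|x^*-y^*|\le L|x-y|$ is trivial.) This shows $\Pi$ satisfies \eqref{l1} with $L=1/\mathrm{dist}(\mathcal{A},0)$, so $\Pi$ is Lipschitz continuous; in particular it is continuous, as required in the definition of a Lipschitz map. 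Taking the infimum over admissible constants $L$ then yields $\mathrm{Lip}(\Pi)\le 1/\mathrm{dist}(\mathcal{A},0)$, which is exactly \eqref{clos}.

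There is no real obstacle here: the statement is an immediate reformulation of Corollary~\ref{co111} in the language of Lipschitz constants, and the only thing to be mildly careful about is the degenerate case $x=y$ (handled trivially) and noting that the hypothesis $\mathrm{dist}(\mathcal{A},0)>0$ guarantees both that $0\notin\mathcal{A}$ (so $\Pi$ is well defined on all of $\mathcal{A}$) and that the constant $1/\mathrm{dist}(\mathcal{A},0)$ is finite.
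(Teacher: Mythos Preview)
Your proof is correct and is essentially the same as the paper's approach: the paper simply states that Corollary~\ref{co1} (together with Corollary~\ref{co111}) follows immediately from Lemma~\ref{pole}, and your argument just spells out that immediate deduction.
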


Our next aim is to study the equality case in \eqref{clos}, under suitable
conditions on $\mathcal{A}$. Fix a connected closed set
$\mathcal{A} \subset \mathbb{R}^n \setminus \{ 0\}$ with ${\rm dist}(\mathcal{A},0)>0\,.$
Let $N(\mathcal{A})=\{h\in\mathcal{A}: {\rm dist}(\mathcal{A},0)=|h|\}$.
We say that
 $\mathcal{A}$ is \textbf{admissible}
if there exist two sequences of  points $\{x_k\}, \{y_k\}\subset\mathcal{A}$, such that $x_k\neq y_k$, tending to two points $h$ and $h'$, respectively, $h,\,h'\in N(\mathcal{A})$ and satisfying one of the following conditions:\\
(i) $h\neq h'$;\\
(ii) $h=h'$ and
 \begin{equation}\label{app}
 |x_k|-|y_k|={\rm o}(\angle(x_k,0,y_k)),\,\,\text{ $k\to \infty$}.
\end{equation}

\begin{example}\label{eg}
The important examples of admissible subsets are the boundaries of domains in $\RN$ strictly starlike w.r.t. the origin. Let $\mathcal{M}$ be one of these boundaries. Because $\mathcal{M}$ is compact, we may assume that
$h\in \mathcal{M}$ which is one of the closest points from the origin. In fact there exist two sequences of points $\{x_k\}, \{y_k\}\subset\mathcal{M}$ , such that for $k\ge 1$, $x_k\neq y_k$ and $|x_k|=|y_k|$, which converge to the same point $h$ and satisfy \eqref{app}. We consider $\mathcal{M}$ in two cases.
The existence of such sequences is trivial if $\mathcal{M}$ coincides with a sphere centered at origin. Otherwise we consider the function $\phi:\mathcal{M}\to\mathbb{R}^+$ by $\phi(x)=|x|$. Then there is a $\delta>0$ such that $[|h|,|h|+\delta]\subset \phi(\mathcal{M})$. This means that  for $t\in(|h|,|h|+\delta)$ there exists $x\in \mathcal{M}$ such that $\phi(x)=t$. The subset $\phi^{-1}({t})=\mathcal{M}\cap S^{n-1}(0,t)$ of $\mathcal{M}$ contains an element $y$ different from $x$. This case is trivial for $n\ge 3$, since $n-1\ge 2$, and deleting a point, we cannot ruin the connectivity of the set. If $n-1=1$, then we use an additional argument that $\mathcal{M}$ is a closed curve, which means that it cannot be separated by deleting only one point. Thus for all dimensions $n\ge2$ and all $\mathcal{M}\subset \mathbb{R}^n$ as above,
there exist different sequences of points $\{x_k\}, \{y_k\}\subset\mathcal{M}$ satisfying $|x_k|=|y_k|$. This means that they satisfy \eqref{app}.
\end{example}

We offer a counterexample.
\begin{example}
Let $\mathcal{A}\subset\R^2$ be the union of the unit circle $S^1$ and the interval $[1/2,1]$.
Then $h=1/2\in N(\mathcal{A})$, and if two sequences of distinct points $\{x_k\}, \{y_k\}\subset \mathcal{A}$ converge to $1/2$, then $\angle(x_k,0,y_k)=0$ as $k\rightarrow \infty$, which means that they do not satisfy \eqref{app}.
Further, we consider the radial projection $\Pi: \mathcal{A}\rightarrow S^1$. For two distinct points $x ,y\in \mathcal{A}$, we have
\begin{eqnarray*}
\frac{|\Pi(x)-\Pi(y)|}{|x-y|}=\left\{\begin{array}{ll}1,&\,\,\, x, y\in S^1,\\
0,&\,\,\, x,y\in[1/2,1].
\end{array}\right.
\end{eqnarray*}
If $x\in[1/2,1]$ and $y\in S^1\backslash\{1\}$, by symmetry we may assume that $y=e^{i \theta}(0<\theta\le\pi)$. Then
\begin{eqnarray*}
\max_x\left\{\frac{|\Pi(x)-\Pi(y)|}{|x-y|}\right\}=\left\{\begin{array}{ll}\frac{|e^{i \theta}-1|}{\sin\theta},&\,\,\, \theta\in(0,\pi/3],\vspace{1mm}\\
\frac{|e^{i \theta}-1|}{|e^{i \theta}-1/2|},&\,\,\, \theta\in(\pi/3,\pi].
\end{array}\right.
\end{eqnarray*}
By calculation, we have
$$\frac{|e^{i \theta}-1|}{\sin\theta}=\sqrt{\frac{2}{1+\cos\theta}}\le 2/\sqrt 3 \,\,\, {\rm if} \,\,\,\theta\in(0,\pi/3]$$
and
$$\frac{|e^{i \theta}-1|}{|e^{i \theta}-1/2|}=\sqrt{2-\frac{2}{5-4\cos\theta}}\le 4/3\,\,\, {\rm if}\,\,\,\theta\in(\pi/3,\pi].$$
Therefore, $${\rm Lip}(\Pi)=4/3<2=1/{\rm dist}(\mathcal{A}, 0).$$
\end{example}

\begin{lemma}\label{le113}
Let $\mathcal{A}$ be a nonempty subset of $\RN$ with $\mathrm{dist}(\mathcal{A},0)>0$. Let $\Pi:\mathcal{A}\rightarrow S^{n-1}$ be the radial projection. Then ${\rm Lip}(\Pi)=1/\rm{dist}(\mathcal{A},0)$ if and only if $\mathcal{A}$ is admissible.
\end{lemma}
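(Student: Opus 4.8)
The statement is an ``if and only if'', so I would prove the two implications separately. By Corollary~\ref{co1} we already know ${\rm Lip}(\Pi)\le 1/{\rm dist}(\mathcal{A},0)$ always holds, so in both directions the real content is about when this bound is sharp. For the ``if'' direction, I would assume $\mathcal{A}$ is admissible and exhibit sequences along which the ratio $|\Pi(x_k)-\Pi(y_k)|/|x_k-y_k|$ tends to $1/{\rm dist}(\mathcal{A},0)$; for the ``only if'' direction, I would assume no admissible configuration exists and show the supremum is strictly smaller, or rather that it is \emph{attained} away from the boundary set $N(\mathcal{A})$ and is therefore strictly below the bound (this requires some care since ``not admissible'' still allows the supremum to be approached).

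\textbf{The ``if'' direction.} Suppose $\mathcal{A}$ is admissible, with sequences $\{x_k\},\{y_k\}$ converging to $h,h'\in N(\mathcal{A})$, so $|h|=|h'|={\rm dist}(\mathcal{A},0)=:d$. I would use the exact two-dimensional identity behind Lemma~\ref{pole}: working in the plane spanned by $x_k$ and $y_k$ and writing $x_k=p_ke^{is_k}$, $y_k=q_ke^{it_k}$, one has
\begin{equation*}
\left(\frac{|x_k-y_k|}{|x_k^*-y_k^*|}\right)^2=\left(\frac{p_k+q_k}{2}\right)^2+\frac{1}{4}(p_k-q_k)^2\cot^2\!\left(\frac{t_k-s_k}{2}\right).
\end{equation*}
In case (i), $h\neq h'$ forces the angle $\angle(x_k,0,y_k)=|t_k-s_k|$ to stay bounded away from $0$, so $\cot^2$ of half that angle stays bounded, while $p_k-q_k\to 0$ and $p_k+q_k\to 2d$; hence the ratio $\to d$, i.e.\ $1/{\rm dist}(\mathcal{A},0)$. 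In case (ii), $p_k+q_k\to 2d$ again, and the error term is $\frac14(p_k-q_k)^2\cot^2(\ldots)$; since $\cot(\theta/2)\sim 2/\theta$ as $\theta\to 0$, this term behaves like $(p_k-q_k)^2/(t_k-s_k)^2=(|x_k|-|y_k|)^2/\angle(x_k,0,y_k)^2$, which $\to 0$ precisely by hypothesis~\eqref{app}. Either way $|\Pi(x_k)-\Pi(y_k)|/|x_k-y_k|\to 1/d$, so ${\rm Lip}(\Pi)\ge 1/d$, and combined with Corollary~\ref{co1} we get equality.

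\textbf{The ``only if'' direction.} I would prove the contrapositive: if $\mathcal{A}$ is not admissible, then ${\rm Lip}(\Pi)<1/d$. Take any sequences of distinct pairs $\{x_k\},\{y_k\}$ in $\mathcal{A}$ along which the ratio approaches ${\rm Lip}(\Pi)$; by passing to subsequences and using that $\mathcal{A}$ is closed (but possibly unbounded, so one must also handle $|x_k|\to\infty$, where the ratio $\to 0$ harmlessly), assume $x_k\to a$, $y_k\to b$. From the identity above, if $\liminf |x_k|+|y_k|>2d$ the limiting ratio is strictly less than $1/d$ unless the $\cot^2$ term blows up; and if $a=b$ with the angle going to $0$, boundedness of the ratio forces $|x_k|-|y_k|=O(\angle(x_k,0,y_k))$ — and if moreover the ratio tends to exactly $1/d$ one needs $|x_k|+|y_k|\to 2d$ (so $a,b\in N(\mathcal{A})$) together with the $o$-estimate \eqref{app}, i.e.\ admissibility; similarly $a\neq b$ with the ratio $\to 1/d$ forces $a,b\in N(\mathcal{A})$, again admissibility. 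The non-admissible hypothesis rules out every way the ratio could reach $1/d$, so ${\rm Lip}(\Pi)<1/d$. The main obstacle here is the bookkeeping: one must enumerate the limiting regimes (both $|x_k|,|y_k|$ bounded vs.\ unbounded; $a=b$ vs.\ $a\neq b$; angle $\to 0$ vs.\ bounded below) and check that in each regime reaching the extremal ratio $1/d$ forces exactly the admissibility configuration. A clean way to package this is to observe that the ratio equals $\sqrt{\big(\tfrac{|x|+|y|}{2}\big)^2+\tfrac14(|x|-|y|)^2\cot^2(\theta/2)}$ with $\theta=\angle(x,0,y)$, note this is $\ge \tfrac{|x|+|y|}{2}\ge d$ with near-equality only when $|x|,|y|$ are both near $d$ and $(|x|-|y|)/\theta$ is small, and then feed that back into the definition of admissibility.
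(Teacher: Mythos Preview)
Your proposal is correct and uses essentially the same engine as the paper: the explicit two-dimensional identity from Lemma~\ref{pole} drives both directions, and the case split $h\neq h'$ versus $h=h'$ is identical. The only difference is organizational: for the ``only if'' part the paper argues \emph{directly}---assume ${\rm Lip}(\Pi)=1/d$, pick sequences realizing the supremum, use \eqref{bine} to force $|x_k|+|y_k|\to 2d$, extract subsequential limits $h,h'\in N(\mathcal{A})$, and read off \eqref{app} from the identity when $h=h'$---which sidesteps the regime-by-regime bookkeeping you anticipate in the contrapositive.
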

\begin{proof}
Assume that $\mathcal{A}$ is admissible and we consider two cases.

Case 1. There exist two sequences of distinct points $\{x_k\}, \{y_k\}\subset\mathcal{A}$ converging to two different points $h$, $h'$, respectively, with $|h|=|h'|=\mathrm{dist}(\mathcal{A},0)$, i.e. $\lim_{k\to \infty}x_k=h$ and $\lim_{k\to \infty}y_k=h'$. Then we have $$\lim_{k\to \infty}\frac{|x^*_k-y^*_k|}{|x_k-y_k|}=\frac{|h^*-{h'}^*|}{|h-h'|}=\frac{|h^*|}{|h|}=\frac{1}{\mathrm{dist}(\mathcal{A},0)}.$$

Case 2. There exist two sequences of distinct points $\{x_k\}, \{y_k\}\subset\mathcal{A}$ converging to the same point $h$, i.e.
$$\lim_{k\to \infty}x_k=h= \lim_{k\to \infty}y_k.$$
In the notation of the proof of Lemma~\ref{pole}, we assume that the points $0,x_k,y_k$ belong  to the complex plane, too.
Let $x_k=p_ke^{is_k}$, $y_k=q_ke^{it_k}$ where $p_k,q_k>0$, $s_k,t_k\in[0,2\pi]$, and $s_k\neq t_k$. By \eqref{app} we have
\begin{equation}\label{sp}
\left(\frac{|x_k-y_k|}{|x^*_k-y^*_k|}\right)^2-\left(\frac{|x_k|+|y_k|}{2}\right)^2
=\frac{1}{4} (p_k - q_k)^2 \cot^2\left(\frac{t_k-s_k}{2}\right)\rightarrow 0,\,\,\,k\to \infty,
\end{equation}
which implies
$$\lim_{k\to \infty}\frac{|x^*_k-y^*_k|}{|x_k-y_k|}=\frac{1}{\mathrm{dist}(\mathcal{A},0)}.$$
By Case 1, Case 2 and Corollary \ref{co1}, we get $\mathrm{Lip}(\Pi)=1/\mathrm{dist}(\mathcal{A},0)$.

\medskip

Assume now that $\mathrm{Lip}(\Pi)=1/\mathrm{dist}(\mathcal{A},0)$.
  Choose two sequences of distinct points
  $\{x_k\}, \{y_k\}\subset \mathcal{A}$ such that $$\mathrm{Lip}(\Pi)=\lim_{k\to \infty}\frac{|x^*_k-y^*_k|}{|x_k-y_k|}.$$
By \eqref{bine}, we have
\begin{equation}\label{jep}
\lim_{k\to \infty}\frac{|x^*_k-y^*_k|}{|x_k-y_k|}\le \limsup_{k\to \infty}\frac{2}{|x_k|+|y_k|}\le 1/\mathrm{dist}(\mathcal{A},0).
\end{equation}
and hence
\beq\label{xyk}
\limsup_{k\to \infty}\frac{2}{|x_k|+|y_k|}=1/\mathrm{dist}(\mathcal{A},0)=\lim_{k\to \infty}\frac{|x^*_k-y^*_k|}{|x_k-y_k|}.
\eeq
There exists a subsequence still denoted by $\{x_k\}$ tending to a point $h$ such that $\mathrm{dist}(\mathcal{A},0)=|h|$.
Similarly, $\{y_k\}$ tends  to a point $h'$ with $\mathrm{dist}(\mathcal{A},0)=|h'|$. If $h=h'$, by \eqref{sp} the sequences $\{x_k\}$ and $\{y_k\}$ satisfy the relation \eqref{app}. Thus $\mathcal{A}$ is admissible.
\end{proof}
\comment{
\begin{remark}\label{pams}
Let $\mathcal{A}\subset \mathbb{R}^n$ be a subset containing at least two points and $a\notin {\mathcal{A}}$. Let $\Pi_a:\mathcal{A}\to S^{n-1}$ be the mapping defined by
$$\Pi_a(x)=\frac{x-a}{|x-a|}.$$ Then $\Pi_a$ is Lipschitz if  $a\notin\overline{\mathcal{A}}$ by Corollary \ref{co1}. Moreover the Lipschitz constant of $\Pi_a$
satisfies $$\mathrm{Lip}(\Pi_a)\le \frac{1}{\mathrm{dist}(\mathcal
A,a)}.$$
In particular, if $\mathcal{A}\subset\RN$ is admissible after a translation $f(x)=x-a$, {\bf THE REST OF THE SENTENCE IS NOT CLEAR} such as $\mathcal{A}$ is the boundary of a strictly starlike domain w.r.t. the point $a$, then the previous equality holds.
\end{remark}
}

O. Martio and U. Srebro defined the so-called \emph{radial stretching} which in fact is the inverse of the radial extension $\varphi_1$ in \eqref{re}. They proved that
\begin{lemma}\label{ms1}\cite[Lemma 2.4]{mar2}
Let $\mathcal{M}$ be the boundary of a strictly starlike domain w.r.t. the origin and $\varphi$ be the inverse map of the radial projection to the unit sphere. If $\varphi$ is bi-Lipschitz, then $\varphi_1$ is bi-Lipschitz.
\end{lemma}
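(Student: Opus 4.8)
The plan is to reduce the bi-Lipschitz bound for $\varphi_1$ to the already-known bi-Lipschitz property of $\varphi$ by exploiting the homogeneity built into the radial extension. First I would recall that $\varphi_1(x)=|x|\,\varphi(x/|x|)$, so that for $x\in\R^n\setminus\{0\}$ we may write $x=|x|\,x^*$ with $x^*=x/|x|\in S^{n-1}$, and then $\varphi_1(x)=|x|\,\varphi(x^*)$. The key observation is that $\varphi_1$ is positively homogeneous of degree $1$: $\varphi_1(tx)=t\,\varphi_1(x)$ for $t>0$. Hence it suffices to estimate $|\varphi_1(x)-\varphi_1(y)|$ from above and below in terms of $|x-y|$ for arbitrary $x,y$, and the natural strategy is to normalize so that one of the two points has modulus one, or more symmetrically, to split the difference
\[
\varphi_1(x)-\varphi_1(y)=|x|\bigl(\varphi(x^*)-\varphi(y^*)\bigr)+(|x|-|y|)\,\varphi(y^*),
\]
and handle the two terms separately using (a) the bi-Lipschitz bound for $\varphi$ on $S^{n-1}$, (b) Lemma~\ref{pole} (or Corollary~\ref{co111}) which controls $|x^*-y^*|$ by $|x-y|$, and (c) the elementary bound $\bigl||x|-|y|\bigr|\le|x-y|$ together with the boundedness of $|\varphi(y^*)|=r_{y^*}$ on the compact set $\mathcal M$.

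More concretely, for the upper bound: suppose $\varphi$ is $L$-bi-Lipschitz on $S^{n-1}$, so $|\varphi(x^*)-\varphi(y^*)|\le L|x^*-y^*|$. By Lemma~\ref{pole}, $|x^*-y^*|\le \tfrac{2}{|x|+|y|}|x-y|\le \tfrac{1}{\dist(\mathcal M,0)}|x-y|$, so the first term is bounded by $\tfrac{|x|}{|x|+|y|}\cdot 2L|x-y|\le 2L|x-y|$; and since $|\varphi(y^*)|\le \max_{w\in\mathcal M}|w|=:R$, the second term is bounded by $R|x-y|$. This gives $\mathrm{Lip}(\varphi_1)\le 2L+R$ (the precise constant is not important here). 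For the lower bound one argues symmetrically on $\varphi_1^{-1}$, which is exactly the Martio–Srebro radial stretching and is the radial extension of $\varphi^{-1}$ (the inverse of the radial projection of $\mathcal M$ composed the other way), so the same decomposition applies with $\varphi$ replaced by $\varphi^{-1}$ and $\dist(\mathcal M,0)$ replaced by $1/R$; alternatively one observes $|\varphi_1(x)-\varphi_1(y)|\ge \bigl||x|-|y|\bigr|\cdot\dist(\mathcal M,0)$ directly from $|\varphi(x^*)|\ge\dist(\mathcal M,0)$ whenever $|x|\ne|y|$, and when $|x|=|y|$ one has $|\varphi_1(x)-\varphi_1(y)|=|x||\varphi(x^*)-\varphi(y^*)|\ge \tfrac{|x|}{L}|x^*-y^*|\ge \tfrac{1}{L}|x-y|$ using the trivial estimate $|x-y|\le |x|\,|x^*-y^*|$ when the moduli coincide; interpolating between these two extreme cases via the triangle inequality and the degree-one homogeneity yields a uniform lower bound $|\varphi_1(x)-\varphi_1(y)|\ge c|x-y|$.

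The main obstacle I anticipate is organizing the lower bound cleanly: unlike the upper bound, where a single decomposition plus two crude estimates suffice, the lower estimate genuinely uses both the angular separation (when $|x|\approx|y|$) and the radial separation (when $|x^*|\approx |y^*|$), and one must show these cannot simultaneously degenerate. The clean way to do this is to invoke that $\varphi^{-1}$ is also bi-Lipschitz (immediate, since $\varphi$ is a bi-Lipschitz homeomorphism of $S^{n-1}$ onto $\mathcal M$) and apply the already-proved upper-bound argument to $\varphi_1^{-1}$, noting that $\varphi_1^{-1}$ is precisely the radial extension of $\varphi^{-1}$ (with the roles of $S^{n-1}$ and $\mathcal M$ swapped and $\dist(\mathcal M,0)$ replaced by $\dist(S^{n-1},0)=1$); then $\mathrm{Lip}(\varphi_1^{-1})\le 2L+1$ and hence $\varphi_1$ is $(2L+R)$-bi-Lipschitz. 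Everything else is elementary, and the only inputs beyond the triangle inequality are Lemma~\ref{pole} and the compactness of $\mathcal M$.
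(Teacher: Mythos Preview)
The paper does not supply its own proof of this lemma; it is quoted verbatim from Martio--Srebro \cite[Lemma~2.4]{mar2}. Your decomposition
\[
\varphi_1(x)-\varphi_1(y)=|x|\bigl(\varphi(x^*)-\varphi(y^*)\bigr)+(|x|-|y|)\,\varphi(y^*)
\]
together with Lemma~\ref{pole} and the compactness of $\mathcal M$ is exactly the standard route, and your upper bound is clean (the stray inequality $\le \tfrac{1}{\dist(\mathcal M,0)}|x-y|$ in the chain is false for general $x,y\in\R^n$, but you never use it).

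There is one genuine wrinkle in your lower-bound argument that you should tighten. You propose to ``swap the roles of $S^{n-1}$ and $\mathcal M$'' and run the identical estimate on $\varphi_1^{-1}$, viewed as the radial extension of $\varphi^{-1}:\mathcal M\to S^{n-1}$. The trouble is that Lemma~\ref{pole} is specific to the \emph{sphere}: if you write $w=s_w m_w$ with $m_w\in\mathcal M$, there is no direct analog of $|m_w-m_z|\le \tfrac{2}{s_w+s_z}|w-z|$, and likewise $|s_w-s_z|\le |w-z|$ fails in general. The fix is not to change the polar decomposition at all: write $\varphi_1^{-1}(w)=|w|\,\psi(w^*)$ with $\psi(u)=u/r(u)$, where $r(u)=|\varphi(u)|$. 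Since $\varphi$ is bi-Lipschitz, $r$ is Lipschitz and bounded between $\dist(\mathcal M,0)$ and $R$, so $\psi:S^{n-1}\to\psi(S^{n-1})$ is itself Lipschitz with bounded image. Now your upper-bound argument applies verbatim to $\varphi_1^{-1}$ (still decomposing relative to $S^{n-1}$), and you are done. This gives a constant like $2\,\mathrm{Lip}(\psi)+\tfrac{1}{\dist(\mathcal M,0)}$ rather than your claimed $2L+1$, but as you say the exact constant is irrelevant here.
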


\begin{lemma}\label{ms2}\cite[Lemma 2.7]{mar2}
If the domain bounded by $\mathcal{M}$ satisfies the cone condition for some $\beta\in(0,\pi/4]$, then $\varphi_1$ is bi-Lipschitz.
\end{lemma}

By Lemma \ref{ms1}, it is clear that $\varphi_1$ is bi-Lipschitz if and only if $\varphi$ is bi-Lipschitz.
\begin{theorem}\label{multi}
Let $\mathcal{M}$ be the boundary of a domain in $\RN$, strictly starlike w.r.t. the origin. Let $\varphi: S^{n-1}\to \mathcal{M}$ be a homeomorphism which sends $R\cap S^{n-1}$ to $R\cap \mathcal{M}$, where $R$ is the ray from $0$. Then
\begin{equation}\label{terza}
{{\mathrm{Lip}(\varphi^{-1})}}=\frac{1}{\mathrm{dist}(\mathcal{M},0)}=
\limsup_{r\to 0}\sup_{|z-\zeta|<r}\frac{|z-\zeta|}{|\varphi(z)-\varphi(\zeta)|}.
\end{equation}
Moreover, $\varphi$ is bi-Lipschitz if and only if the domain bounded by $\mathcal{M}$ satisfies the tangent condition for some $\alpha$ and if by $\alpha=\alpha(\mathcal{M})$ we denote the maximum of all such constants, then we have
\begin{equation}\label{prima}
{\mathrm{Lip}(\varphi)}=\sup_{\zeta}\limsup_{z\to
\zeta}\frac{|\varphi(z)-\varphi(\zeta)|}{|z-\zeta|}
\end{equation}
and for $r=|x|$,
\begin{equation}\label{qinka}
{\mathrm{Lip}(\varphi)}={\rm ess}\sup_{x\in\mathcal{M}}\frac{|x|}{\sin\alpha(x)}\in\left[\frac{r_{\mathrm{ min}}}{\sin\alpha}, \frac{r_{\mathrm{max}}}{\sin\alpha}\right].
\end{equation}
\end{theorem}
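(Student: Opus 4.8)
The plan is to establish the three displayed formulas in turn, using the radial projection results already proven and a local-to-global principle for the Lipschitz constants.

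First I would prove \eqref{terza}. The map $\varphi^{-1}:\mathcal{M}\to S^{n-1}$ is exactly the radial projection $\Pi$ restricted to the set $\mathcal{A}=\mathcal{M}$, so Corollary~\ref{co1} gives $\mathrm{Lip}(\varphi^{-1})\le 1/\mathrm{dist}(\mathcal{M},0)$. For the reverse inequality I would invoke Example~\ref{eg}: the boundary of a strictly starlike domain is always admissible, so Lemma~\ref{le113} yields equality $\mathrm{Lip}(\varphi^{-1})=1/\mathrm{dist}(\mathcal{M},0)$. For the third term in \eqref{terza}, note that the admissibility construction in Example~\ref{eg} produces sequences $\{x_k\},\{y_k\}\subset\mathcal{M}$ converging to the \emph{same} point $h$ (the nearest point) with $|x_k|=|y_k|$, hence with $z_k=\varphi^{-1}(x_k)$, $\zeta_k=\varphi^{-1}(y_k)$ we get $|z_k-\zeta_k|\to 0$ and $|z_k-\zeta_k|/|\varphi(z_k)-\varphi(\zeta_k)|\to 1/\mathrm{dist}(\mathcal{M},0)$; this shows the $\limsup_{r\to 0}$ in \eqref{terza} is at least $1/\mathrm{dist}(\mathcal{M},0)$. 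The opposite bound is immediate from $\mathrm{Lip}(\varphi^{-1})=1/\mathrm{dist}(\mathcal{M},0)$ since the local quantity is dominated by the global Lipschitz constant. This is the part I expect to be the main obstacle, because it requires carefully extracting from \eqref{sp} that the ratio tends \emph{exactly} to $1/\mathrm{dist}(\mathcal{M},0)$ along the specific sequences, and verifying that the $|x_k|=|y_k|$ sequences indeed give the claimed local behavior under $\varphi^{-1}$.

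Next I would prove the equivalence: $\varphi$ bi-Lipschitz $\iff$ the domain satisfies the $\alpha$-tangent condition. For the forward direction, suppose $\varphi$ is $L$-bi-Lipschitz. Fix $x\in\mathcal{M}$, write $x=r_x x^*$, and for $z\in\mathcal{M}$ near $x$ consider the decomposition of $z-x$ into its radial component (along $x^*$) and its spherical component. Since $\Pi=\varphi^{-1}$ is Lipschitz, the spherical displacement $|z^*-x^*|$ is controlled by $|z-x|$; conversely $\varphi$ being Lipschitz forces $|z-x|\le L|z^*-x^*|$, which prevents $z$ from approaching $x$ along a nearly radial direction and hence bounds $\alpha(z,x)$ away from $0$ — this gives a positive lower bound $\alpha$ for $\alpha(x)$ uniform in $x$. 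For the converse, if the domain satisfies the $\alpha$-tangent condition, then by Proposition~\ref{stat} it satisfies the $\beta$-cone condition, and Lemma~\ref{ms2} gives that $\varphi_1$ is bi-Lipschitz; by Lemma~\ref{ms1} (and the remark following it that the two are equivalent) $\varphi$ is then bi-Lipschitz.

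Finally I would establish \eqref{prima} and \eqref{qinka}. Formula \eqref{prima} is the local-to-global principle for $\mathrm{Lip}(\varphi)$: the inequality $\mathrm{Lip}(\varphi)\ge \sup_\zeta\limsup_{z\to\zeta}|\varphi(z)-\varphi(\zeta)|/|z-\zeta|$ is trivial, and the reverse follows because $\varphi$ is bi-Lipschitz hence (being the inverse of a Lipschitz map on $S^{n-1}$, composed suitably) differentiable a.e.\ with $\mathrm{Lip}(\varphi)=\mathrm{ess\,sup}|\varphi'|$ by \eqref{bil}-type reasoning, together with the fact that the pointwise derivative norm is realized in the limit. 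For \eqref{qinka} I would compute the derivative of $\varphi$ at a point $x^*\in S^{n-1}$ where $\mathcal{M}$ has a tangent hyperplane at $x=\varphi(x^*)$: parametrizing locally, the derivative maps the tangent space of $S^{n-1}$ into the tangent hyperplane of $\mathcal{M}$, and a direct geometric computation (using that the tangent hyperplane's normal makes angle $\theta=\pi/2-\alpha(x)$ with the radial direction) shows that the largest singular value of $\varphi'$ equals $|x|/\sin\alpha(x)$, while the smaller singular values equal $|x|$. Taking the essential supremum over $x\in\mathcal{M}$ gives the first equality in \eqref{qinka}, and since $\alpha(x)\in[\alpha,\pi/2]$ and $|x|\in[|x|_{\min},|x|_{\max}]$, with $\sin$ increasing on $(0,\pi/2]$, the containment in the displayed interval follows. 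The extraction of the exact value $|x|/\sin\alpha(x)$ for the operator norm of $\varphi'$ is the one calculation that needs genuine care, but it reduces to the planar picture in the $2$-plane spanned by $x^*$ and the chosen tangent direction.
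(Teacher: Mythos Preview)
Your approach is essentially the same as the paper's, and correct in outline. Two points where the paper's argument differs and is sharper:

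For the implication ``$\varphi$ bi-Lipschitz $\Rightarrow$ $\alpha$-tangent condition'', your sketch via radial/spherical decomposition is sound but vague. The paper instead gives a one-line Law of Sines computation: with $\theta=\angle(z,0,x)$ one has
\[
\sin\angle(z,x,0)=\frac{|z|\sin\theta}{|z-x|}
=\frac{|\varphi^{-1}(z)-\varphi^{-1}(x)|}{|z-x|}\,|z|\sin\!\left(\tfrac{\pi-\theta}{2}\right)
\ge \frac{\mathrm{dist}(\mathcal{M},0)}{\mathrm{Lip}(\varphi)}\sin\!\left(\tfrac{\pi-\theta}{2}\right),
\]
which immediately gives $\alpha(x)\ge\arcsin\bigl(\mathrm{dist}(\mathcal{M},0)/\mathrm{Lip}(\varphi)\bigr)$ uniformly.

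For \eqref{prima} and \eqref{qinka}, the paper does not argue via \eqref{bil}; note that \eqref{bil} is stated for convex $D$, and $S^{n-1}$ is not convex, so your appeal to ``\eqref{bil}-type reasoning'' has a gap. The paper instead invokes the local-to-global Lemma~\ref{ar1} (proved in \cite{comp1} via Lemma~\ref{lel}), which gives \eqref{limipo} and the derivative formula \eqref{limipo1}; combining \eqref{limipo1} with \eqref{rdot} yields $\sqrt{r'^2+r^2}=r/\sin\alpha_t=|x|/\sin\alpha(x)$, which is exactly \eqref{qinka}. Your direct singular-value computation reaches the same endpoint and is fine once you justify the local-to-global step on the sphere (e.g.\ by restricting to great-circle arcs, which reduces to Lemma~\ref{lel}).
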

\begin{proof}
By Example \ref{eg}, Lemma~\ref{le113} and \eqref{xyk}, there exist two sequences of distinct points $\{x_k\}, \{y_k\}\subset\mathcal{M}$ such that
$\lim_{k\to \infty}x_k=\lim_{k\to \infty}y_k=h$ with $|h|=\mathrm{dist}(\mathcal{M},0)$ and
$$
\frac{1}{\mathrm{dist}(\mathcal{M},0)}=\lim_{k\to \infty}\frac{|x^*_k-y^*_k|}{|x_k-y_k|}
$$
Then
$${{\mathrm{Lip}(\varphi^{-1})}}=\lim_{k\to \infty}\frac{|x^*_k-y^*_k|}{|x_k-y_k|}\le
\limsup_{r\to 0}\sup_{|z-\zeta|<r}\frac{|z-\zeta|}{|\varphi(z)-\varphi(\zeta)|}
\le{{\mathrm{Lip}(\varphi^{-1})}}.$$
Hence \eqref{terza} holds.

By Proposition \ref{stat} and Lemma \ref{ms2}, or by \cite[Eq.~5.11]{gv}, it is clear that $\varphi$ is bi-Lipschitz if the domain bounded by $\mathcal{M}$ satisfies the $\alpha$-tangent condition.

On the other hand, we suppose that $\varphi$ is bi-Lipschitz, then ${\mathrm{Lip}(\varphi)}<\infty$. For two distinct points $x, z\in\mathcal{M}$. Let $\angle(z,0,x)=\theta$, then by the Law of Sines,
\[\begin{split}\sin\angle(z,x,0)&=\frac{|z|\sin\theta}{|z-x|}\\&=\frac{|\varphi^{-1}(z)-\varphi^{-1}(x)|}{|z-x|}|z|\sin\left(\frac{\pi-\theta}{2}\right)\\&\ge \frac{{\rm dist}(\mathcal{M}, 0)}{\rm{Lip}(\varphi)}\sin\left(\frac{\pi-\theta}{2}\right).\end{split}\]
Hence $$\liminf_{z\rightarrow x}\sin\angle(z,x,0)\ge \frac{{\rm dist}(\mathcal{M}, 0)}{\rm{Lip}(\varphi)}.$$
By Corollary \ref{co111} we have $\rm{Lip}(\varphi)\ge {\rm dist}(\mathcal{M}, 0)$
which implies that there exists $\alpha\in(0, \pi/2]$ such that $\alpha(x)=\liminf_{z\rightarrow x}\angle(z,x,0)\ge\alpha$.

The equality \eqref{prima} follows from \eqref{limipo}.

The equality \eqref{qinka} follows from \eqref{limipo1} and \eqref{rdot}.
\end{proof}

\begin{lemma}\label{ar1}\cite[Theorem 3.1]{comp1}
Let $\gamma$ be the curve which is the boundary of a domain in $\R^2$ strictly starlike w.r.t. the origin.
Let $\phi(t)=r(t) e^{it}: [0,2\pi]\rightarrow \gamma$ and $\varphi(e^{it})=\phi(t)$. Then
\begin{equation}\label{limipo}
\sup_{t\neq s}\frac{|\phi(t)-\phi(s)|}{|e^{it}-e^{is}|}=\sup_s\limsup_{t\to s}
\frac{|\phi(t)-\phi(s)|}{|e^{it}-e^{is}|}.
\end{equation}
Moveover, if $\sup_s\limsup_{t\to s}\frac{|\phi(t)-\phi(s)|}{|e^{it}-e^{is}|}<\infty,$  then
\begin{equation}\label{limipo1}
{\rm Lip}(\varphi)={\rm Lip}(\phi)=\mathrm{ess}\sup_{t}\sqrt{ r'^2(t)+r^2(t)}.
\end{equation}
\end{lemma}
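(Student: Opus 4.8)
The plan is to reduce everything to the single explicit identity
\[
g(s,t)^2:=\left(\frac{|\phi(t)-\phi(s)|}{|e^{it}-e^{is}|}\right)^2=\frac{(r(t)-r(s))^2}{4\sin^2\!\big(\tfrac{t-s}{2}\big)}+r(s)r(t),
\]
obtained exactly as in the proof of Lemma~\ref{pole} by writing $|\phi(t)-\phi(s)|^2=(r(t)-r(s))^2+4r(s)r(t)\sin^2(\tfrac{t-s}2)$ and $|e^{it}-e^{is}|^2=4\sin^2(\tfrac{t-s}2)$; note that strict starlikeness together with boundedness forces $r$ to be continuous. In \eqref{limipo} the inequality $\sup_s\limsup_{t\to s}g(s,t)\le\sup_{s\ne t}g(s,t)$ is trivial, and if the right-hand side $\lambda:=\sup_s\limsup_{t\to s}g(s,t)$ is infinite there is nothing to prove, so I assume $\lambda<\infty$. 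Dropping the nonnegative term $r(s)r(t)$ and using $\sin(\tfrac{t-s}2)\sim\tfrac{t-s}2$ gives $\limsup_{t\to s}|r(t)-r(s)|/|t-s|\le\lambda$ at every $s$; since $r$ is continuous with upper Dini derivate bounded by $\lambda$ everywhere, $r$ is $\lambda$-Lipschitz, hence a.e.\ differentiable with $|r'|\le\lambda$. I then introduce $N:=\mathrm{ess}\sup_t(r'(t)^2+r(t)^2)=\mathrm{ess}\sup_t|\phi'(t)|^2<\infty$. At every differentiability point $s$ of $r$ the identity yields $\lim_{t\to s}g(s,t)^2=r'(s)^2+r(s)^2$, so $\lambda^2\ge\sup\{r'(s)^2+r(s)^2:\ r\text{ differentiable at }s\}\ge N$, i.e.\ $\lambda\ge\sqrt N$. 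It remains to prove the reverse bound $g(s,t)^2\le N$ for all $s\ne t$; this is the heart of the matter and the step I expect to be the main obstacle, because the naive arc-length estimate $|\phi(t)-\phi(s)|\le\sqrt N\,|t-s|$ overshoots (the chord $|e^{it}-e^{is}|$ is strictly shorter than $|t-s|$), so the cross term $r(s)r(t)$ must genuinely be exploited.

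The key observation is that the a.e.\ inequality $|r'|\le\sqrt{N-r^2}$ becomes linear after the substitution $\omega=\arcsin(r/\sqrt{N+\eps})$ for a fixed $\eps>0$. Since $r\le\sqrt N<\sqrt{N+\eps}$, the function $\omega$ takes values in a compact subinterval of $[0,\pi/2)$, and it is $1$-Lipschitz because $|\omega'|=|r'|/\sqrt{N+\eps-r^2}\le\sqrt{N-r^2}/\sqrt{N+\eps-r^2}<1$ a.e.; passing first to $N+\eps$ is what circumvents the lack of Lipschitz control of $\arcsin$ near the endpoint $1$.

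Writing $r=\sqrt{N+\eps}\,\sin\omega$ and, for a pair with $0<t-s\le\pi$, putting $a=\omega(s),\,b=\omega(t)\in[0,\pi/2)$ with $|b-a|\le t-s$, the identity $\sin b-\sin a=2\cos\tfrac{a+b}2\sin\tfrac{b-a}2$ and the monotonicity of $\sin$ on $[0,\pi/2]$ give
\[
g(s,t)^2=(N+\eps)\Big[\frac{(\sin b-\sin a)^2}{4\sin^2\tfrac{t-s}2}+\sin a\sin b\Big]\le(N+\eps)\Big[\cos^2\tfrac{a+b}2+\sin a\sin b\Big]=(N+\eps)\cos^2\tfrac{a-b}2\le N+\eps.
\]
Letting $\eps\to0$ yields $g(s,t)^2\le N$ for $0<t-s\le\pi$; the remaining arcs $t-s\in(\pi,2\pi)$ follow from the symmetry $g(s,t)=g(t,s+2\pi)$, valid because $r$ is $2\pi$-periodic and $\sin\tfrac{2\pi-(t-s)}2=\sin\tfrac{t-s}2$, so the shorter complementary arc is always covered.

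Combining the two bounds gives $\sqrt N\le\lambda\le\sup_{s\ne t}g(s,t)\le\sqrt N$, so all three coincide, which is precisely \eqref{limipo}. Finally $\mathrm{Lip}(\varphi)=\sup_{s\ne t}g(s,t)=\sqrt N$, while $\phi:[0,2\pi]\to\gamma$ is Lipschitz on a convex domain, so by \eqref{bil} we get $\mathrm{Lip}(\phi)=\mathrm{ess}\sup_t|\phi'(t)|=\mathrm{ess}\sup_t\sqrt{r'(t)^2+r(t)^2}=\sqrt N$, establishing \eqref{limipo1}.
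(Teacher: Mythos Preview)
Your proof is correct and gives a genuinely self-contained argument, whereas the paper's proof does not: the paper simply cites \cite[Theorem~3.1]{comp1} for the identity \eqref{limipo}, after first using Lemma~\ref{lel} (the Dini-derivative criterion) to show that the Lipschitz hypothesis assumed there is redundant, and then derives \eqref{limipo1} from \eqref{bil}. You use the same Dini step (``continuous with upper Dini derivate bounded by $\lambda$ implies $\lambda$-Lipschitz'') to get $r$ Lipschitz, but from there you take a completely different route: the explicit formula $g(s,t)^2=(r(t)-r(s))^2/(4\sin^2\tfrac{t-s}{2})+r(s)r(t)$ together with the clever substitution $\omega=\arcsin(r/\sqrt{N+\eps})$, which linearizes the a.e.\ constraint $r'^2+r^2\le N$ into $|\omega'|<1$ and reduces the global inequality to the trigonometric identity $\cos^2\tfrac{a+b}{2}+\sin a\sin b=\cos^2\tfrac{a-b}{2}$.

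What your approach buys is a transparent, elementary proof that never leaves the page: one sees exactly why the global Lipschitz ratio cannot exceed the pointwise bound $\sqrt N$, namely because the chord $|e^{it}-e^{is}|$ and the difference $r(t)-r(s)$ are controlled by the \emph{same} sine after the $\arcsin$ change of variable. The paper's route, by contrast, is shorter on the page but relies on an external reference for the substantive inequality. A minor remark: you silently invoke the Dini $\Rightarrow$ Lipschitz implication; in the paper this is isolated as Lemma~\ref{lel}, so it would be cleaner to cite that (or a standard reference) at that point.
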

In \cite[Theorem 3.1]{comp1} a condition similar to the one
in Lemma \label{ar1} was studied under the additional assumption that $\phi$ be Lipschitz. However, this assumption is redundant by Lemma \ref{lel}.
\begin{proof}
If $\sup_s\limsup_{t\to s}
\frac{|\phi(t)-\phi(s)|}{|e^{it}-e^{is}|}=\infty$, then it is trivial that \eqref{limipo} holds. We now consider that $\sup_s\limsup_{t\to s} \frac{|\phi(t)-\phi(s)|}{|e^{it}-e^{is}|}<\infty$. Then
$$\sup_s\limsup_{t\to s} \frac{|\phi(t)-\phi(s)|}{|e^{it}-e^{is}|}=\sup_s\limsup_{t\to s} \frac{|\phi(t)-\phi(s)|}{|t-s|}<\infty.$$
Let $\phi(t)=(\phi_1(t),\phi_2(t))$. By Lemma \ref{lel} $\phi_1, \phi_2$ are both Lipschitz continuous and hence $\phi$ is a Lipschitz map. Then by \cite[Theorem 3.1]{comp1}, the equality \eqref{limipo} holds. We also have
$${\rm Lip}(\phi)\le{\rm Lip}(\varphi)=\sup_s\limsup_{t\to s}\frac{|\varphi(e^{it})-\varphi(e^{is})|}{|e^{it}-e^{is}|}=\sup_s\limsup_{t\to s} \frac{|\phi(t)-\phi(s)|}{|t-s|}\le {\rm Lip}(\phi)\,.$$
This inequality together with \eqref{bil} yields \eqref{limipo1}.
\end{proof}

\begin{lemma}\label{lel}
 Assume that $g:[0,1]\to \R$ is a real function such that $$M=\sup_t\limsup_{s\to t}\frac{|g(s)-g(t)|}{|t-s|}<\infty,$$ then $g$ is $M$-Lipschitz continuous.
\end{lemma}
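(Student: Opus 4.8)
The plan is to prove directly that $|g(b)-g(a)|\le M|b-a|$ for every pair $0\le a<b\le 1$, by a connectedness (``continuous induction'') argument on $[a,b]$. First I would record two consequences of the hypothesis. For each $t\in[0,1]$ and each $\varepsilon>0$, since $\limsup_{s\to t}\frac{|g(s)-g(t)|}{|s-t|}\le M$, there is some $\delta=\delta(t,\varepsilon)>0$ with
\[
|g(s)-g(t)|\le (M+\varepsilon)\,|s-t|\qquad\text{whenever } s\in[0,1],\ |s-t|<\delta .
\]
Letting $s\to t$ in this inequality shows, in particular, that $g$ is continuous on $[0,1]$; this will be needed below.

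Next I would fix $a<b$ and $\varepsilon>0$ and consider the set
\[
S=\bigl\{\,x\in[a,b]:\ |g(y)-g(a)|\le (M+\varepsilon)(y-a)\ \text{ for all } y\in[a,x]\,\bigr\}.
\]
Then $a\in S$, and $S$ is downward closed, so it is an interval with left endpoint $a$; put $c=\sup S$. Continuity of $g$ makes $S$ closed, hence $c\in S$, and in particular $|g(c)-g(a)|\le (M+\varepsilon)(c-a)$. Suppose $c<b$. Applying the local estimate at $t=c$, for every $y$ with $c<y\le b$ and $y-c<\delta(c,\varepsilon)$ the triangle inequality gives
\[
|g(y)-g(a)|\le |g(y)-g(c)|+|g(c)-g(a)|\le (M+\varepsilon)(y-c)+(M+\varepsilon)(c-a)=(M+\varepsilon)(y-a).
\]
Since the same bound already holds on $[a,c]$ (because $c\in S$), any such $y$ lies in $S$, contradicting $c=\sup S$. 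Therefore $c=b$, i.e.\ $|g(b)-g(a)|\le (M+\varepsilon)(b-a)$; letting $\varepsilon\downarrow 0$ yields $|g(b)-g(a)|\le M|b-a|$, which is the assertion.

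The only point that needs care is the claim that $S$ is closed, and this is exactly where the (already derived) continuity of $g$ enters: if $x_n\in S$ with $x_n\uparrow c$, then for every $y<c$ we have $y\le x_n$ for large $n$, hence $|g(y)-g(a)|\le (M+\varepsilon)(y-a)$, and letting $y\to c^{-}$ and using continuity of $g$ at $c$ gives the same bound at $y=c$. I do not expect any genuine obstacle; the whole content is the observation that a pointwise $\limsup$-type derivative bound propagates to a global Lipschitz bound, once one notices that the hypothesis already forces $g$ to be continuous.
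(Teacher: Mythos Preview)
Your argument is correct. The hypothesis gives a local $(M+\varepsilon)$-Lipschitz estimate at every point, this forces continuity, and your creeping/continuous-induction argument on the set $S$ then propagates the local bound to a global one; letting $\varepsilon\downarrow 0$ finishes. The only step I would word a bit more carefully is ``any such $y$ lies in $S$'': to place some $x\in(c,c+\delta)\cap[a,b]$ in $S$ you need the bound for every $y'\in[a,x]$, and you have indeed covered both ranges $y'\in[a,c]$ (since $c\in S$) and $y'\in(c,x]$ (by the triangle inequality); just make sure the reader sees this.

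The paper takes a genuinely different route: it invokes an outer-measure estimate (essentially \cite[Lemma~3.13]{misha}) to the effect that if $|D^+ g|\le M$ everywhere on a set $E$ then $\mathrm{mes}(g(E))\le M\,\mathrm{mes}(E)$, and applies this with $E=[s,t]$ together with continuity of $g$ (so that $g([s,t])$ is an interval) to get $|g(s)-g(t)|\le \mathrm{mes}(g([s,t]))\le M|s-t|$. Your proof is more elementary and fully self-contained, avoiding any appeal to measure theory or an external reference, while the paper's argument is shorter once one accepts the cited image-measure lemma. Both hinge on the same preliminary observation that the pointwise $\limsup$ bound already forces continuity.
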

\begin{proof}
 Suppose that $E$ is a measurable subset of an interval, $g$ is a function on $E$ such that for every $x\in E$, $|D^+(g)(x)|\le M$, where $$|D^+(g)(x)|=\limsup_{y \to x}\frac{|g(x)-g(y)|}{|x-y|}.$$ Then
$$\mathrm{mes}(g(E))\le M\mathrm{mes}(E),$$
see e.g. the proof of Lemma 3.13 in \cite{misha}. Here $\mathrm{mes}$  is the (outer) Lebesgue measure.
Note that  \cite[Lemma~3.13]{misha} assumes that $g$ is differentiable, but the proof uses only the upper bound $|D^+(g)(x)|\le M$ for all $x$. In our case, $E$ is an interval $[s,t]$ in $[0,1]$ and
$$M=\sup_x\limsup_{y\to x}\frac{|g(x)-g(y)|}{|x-y|}.$$
Then for every $s,t\in[0,1]$, we have
$$|g(s)-g(t)|\le M\mathrm{mes}(g([s,t]))\le M|s-t|.$$
Hence $g$ is $M$-Lipschitz.
\end{proof}

By (\ref{terza}), we have
\begin{corollary}\label{arush}
Let $\mathcal{M}$ be the boundary of a domain in $\RN$, strictly starlike w.r.t. the origin. Let $\varphi$ be as in Theorem \ref{multi}. Then
$$\liminf_{r\to 0}\inf_{|z -\zeta|<r}\frac{|\varphi(z)-\varphi(\zeta)|}{|z-\zeta|}= {\inf_{z,\zeta\in
S^{n-1}}\frac{|\varphi(z)-\varphi(\zeta)|}{|z-\zeta|}}=\mathrm{dist}(\mathcal{M},0).$$
\end{corollary}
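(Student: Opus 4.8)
The plan is to deduce the corollary from the identity \eqref{terza} of Theorem~\ref{multi} purely by taking reciprocals. Since $\mathcal{M}$ is compact and $0$ lies in the (open) starlike domain bounded by $\mathcal{M}$, we have $\mathrm{dist}(\mathcal{M},0)>0$; and since the $\alpha$-tangent hypothesis makes $\varphi$ bi-Lipschitz by Theorem~\ref{multi}, every quotient $|\varphi(z)-\varphi(\zeta)|/|z-\zeta|$ is bounded and bounded away from $0$, so no reciprocal below is taken of $0$ or $\infty$. Because $\varphi$ sends each ray from $0$ onto the corresponding ray, $\varphi^{-1}$ is just the radial projection restricted to $\mathcal{M}$, so that $\varphi(z)^*=z$ for $z\in S^{n-1}$ and
$$\mathrm{Lip}(\varphi^{-1})=\sup_{z\neq\zeta\in S^{n-1}}\frac{|z-\zeta|}{|\varphi(z)-\varphi(\zeta)|},\qquad \sup_{|z-\zeta|<r}\frac{|z-\zeta|}{|\varphi(z)-\varphi(\zeta)|}=\Bigl(\inf_{|z-\zeta|<r}\frac{|\varphi(z)-\varphi(\zeta)|}{|z-\zeta|}\Bigr)^{-1}.$$

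First I would record the lower bound. By the first equality in \eqref{terza}, $\mathrm{Lip}(\varphi^{-1})=1/\mathrm{dist}(\mathcal{M},0)$; taking reciprocals in $\frac{|z-\zeta|}{|\varphi(z)-\varphi(\zeta)|}\le \mathrm{Lip}(\varphi^{-1})$ gives $\frac{|\varphi(z)-\varphi(\zeta)|}{|z-\zeta|}\ge \mathrm{dist}(\mathcal{M},0)$ for all distinct $z,\zeta\in S^{n-1}$, hence $\inf_{z,\zeta\in S^{n-1}}\frac{|\varphi(z)-\varphi(\zeta)|}{|z-\zeta|}\ge \mathrm{dist}(\mathcal{M},0)$ (this also follows directly from Corollary~\ref{co111} with $\mathcal{A}=\mathcal{M}$). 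Next I would handle the local quantity: using $\inf(1/f)=1/\sup f$ and the second equality in \eqref{terza},
$$\liminf_{r\to 0}\inf_{|z-\zeta|<r}\frac{|\varphi(z)-\varphi(\zeta)|}{|z-\zeta|}=\Bigl(\limsup_{r\to 0}\sup_{|z-\zeta|<r}\frac{|z-\zeta|}{|\varphi(z)-\varphi(\zeta)|}\Bigr)^{-1}=\mathrm{dist}(\mathcal{M},0).$$
Finally, since for every $r>0$ the global infimum is $\le \inf_{|z-\zeta|<r}\frac{|\varphi(z)-\varphi(\zeta)|}{|z-\zeta|}$, letting $r\to 0$ yields $\inf_{z,\zeta\in S^{n-1}}\frac{|\varphi(z)-\varphi(\zeta)|}{|z-\zeta|}\le \mathrm{dist}(\mathcal{M},0)$; combined with the lower bound, all three displayed quantities equal $\mathrm{dist}(\mathcal{M},0)$.

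There is no serious obstacle: all the mathematical substance sits in Theorem~\ref{multi}, and what remains is elementary bookkeeping with suprema, infima and reciprocals. The one point deserving a line of care is the interchange of $\liminf_{r\to 0}$ with the reciprocal; this is justified by noting that $r\mapsto \inf_{|z-\zeta|<r}\frac{|\varphi(z)-\varphi(\zeta)|}{|z-\zeta|}$ is nonincreasing in $r$ (the infimum runs over a larger set of pairs as $r$ grows), so the $\liminf$ as $r\to 0^+$ is a genuine limit and equals $\sup_{r>0}$ of that expression, which matches $\bigl(\limsup_{r\to 0}\sup_{|z-\zeta|<r}\frac{|z-\zeta|}{|\varphi(z)-\varphi(\zeta)|}\bigr)^{-1}$ supplied by \eqref{terza}.
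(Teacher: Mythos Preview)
Your argument is correct and is exactly the approach the paper takes: the corollary is stated immediately after \eqref{terza} with only the remark ``By \eqref{terza}, we have'', so the intended proof is precisely the reciprocal bookkeeping you spell out. You have simply made explicit the monotonicity justification for swapping $\liminf$ with the reciprocal, which the paper leaves implicit.
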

\medskip

\section{bi-Lipschitz and quasiconformal constants for radial extensions}

For the statement of our results we carry out some preliminary considerations.
Let $\gamma$ be the boundary of a domain in $\R^2$ strictly starlike w.r.t. the origin which satisfies the $\alpha$-tangent condition. We will recall some properties
of $\gamma$. Let $t \mapsto r(t)e^{it}$ be the polar parametrization of
$\gamma$.
If the curve $\gamma$ is smooth, following the notations in \cite{comp}, the angle $\lambda_{t}$
between $\zeta=r(t)e^{it}$ and the positive oriented tangent at $\zeta$
satisfies
\begin{equation}\label{rdot}
\cot\lambda_{t}=\frac{r'(t)}{r(t)}.
\end{equation}
Observe that $\alpha(\zeta)=\min\{\lambda_t,\pi-\lambda_t\}$. Moreover we have
$$
0<\alpha_1=\inf_{t} \lambda_t\le \frac{\pi}2\le \sup_t
\lambda_t =\alpha_2<\pi.
$$
Then
\begin{equation}\label{alal}
\alpha=\min\{\alpha_1,\pi-\alpha_2\}.
\end{equation}

Let $z=\rho e^{it}$,$\varphi(e^{it})=r(t)e^{it}$. Let ${f^g}(z)=g(\rho) \varphi(e^{it})$, for some real positive smooth increasing function $g$, with $g(0)=0$ and $g(1)=1$.  By direct calculation,
$$
|{{f^g}}_z|=\frac{1}{2|z|}|g'\cdot r\cdot \rho-i g\cdot(r'+ir)|\,\, \text{ and }\,\, |{{f^g}}_{\bar
z}|=\frac{1}{2|z|}|g'\cdot r\cdot \rho+i g\cdot(r'+ir)|,
$$
by \eqref{rdot}, we have
$$
|{{f^g}}_z|=\frac{r}{2|z|}|g'\cdot \rho+g-i g\cot\lambda_t|\,\, \text{ and }\,\, |{{f^g}}_{\bar
z}|=\frac{r}{2|z|}|g'\cdot \rho-g+i g\cot\lambda_t|.
$$
In order to minimize the constant of quasiconformality we define the function
$$\kappa(g,z):=\mu^2_{{f^g}}(z).$$ Then
$$\kappa(g,z)=\frac{|g'\cdot \rho-g+i g\cot\lambda_t|^2}{
|g'\cdot \rho+g-i g\cot\lambda_t|^2}=\frac{(h-1)^2+\cot^2\lambda_t}{(h+1)^2+\cot^2\lambda_t}, $$
where
$$h(\rho)=\frac{g'(\rho)\rho}{g(\rho)}.$$
Since
$$\kappa(g,z)\le \frac{(h-1)^2+\cot^2\alpha}{(h+1)^2+\cot^2\alpha},$$
we easily see that the derivative of the last expression w.r.t. $h$ is
$$\frac{4 (h - \csc \alpha) (h + \csc \alpha)}{((h+1)^2 + \cot^2\alpha)^2}, $$
which means that the minimum of the expression $$\frac{(h-1)^2+\cot\alpha}{(h+1)^2+\cot\alpha}$$
is attained for $h=\csc\alpha$. Further the unique solution of differential equation
$$\frac{g'(\rho)\rho}{g(\rho)}=\csc\alpha$$
with $g(0)=0$ and $g(1)=1$ is $g(\rho)=\rho^{\csc\alpha}$.
This means that the minimal constant of quasiconformality for radial stretching mappings is attained by the mapping $${\varphi^\circ}(z)=|z|^{\csc\alpha}\varphi(z/|z|).$$
Further
\begin{equation}\label{expl}
\Lambda_{\varphi^\circ}(z)=|\varphi^\circ_z| +|\varphi^\circ_{\bar z}|=\frac{r(t)\rho^{\csc \alpha-1}}{2}(|\csc \alpha +1-i \cot \lambda_t|+|\csc \alpha -1+i \cot \lambda_t|)
\end{equation}
and
\begin{equation}\label{expl2}
\lambda_{\varphi^\circ}(z)=|\varphi^\circ_z| -|\varphi^\circ_{\bar z}|=\frac{r(t)\rho^{\csc \alpha-1}}{2}(|\csc \alpha +1-i \cot \lambda_t|-|\csc \alpha -1+i \cot \lambda_t|)
\end{equation}
or
\begin{equation}\label{david}
\frac{1}{\lambda_{\varphi^\circ}(z)}=\frac{\rho^{1-\csc \alpha}}
{2r(t)\csc \alpha}(|\csc \alpha +1-i \cot \lambda_t|+|\csc \alpha -1+i \cot \lambda_t|).
\end{equation}
This implies that ${\varphi^\circ}$ is quasiconformal with $k=\tan(\frac{\pi}{4}-\frac{\alpha}{2})$ and $K=\cot\frac{\alpha}{2}$.

In general for $a>0$ we define $\varphi_a(z)=|z|^{a}\varphi(e^{it})$ and obtain
\begin{equation}\label{expl0}
\Lambda_{\varphi_a}(z)=\frac{r(t)\rho^{a-1}}{2}(|a +1-i \cot \lambda_t|+|a -1+i \cot \lambda_t|)
\end{equation}
and
\begin{equation}\label{drita}
\frac{1}{\lambda_{\varphi_a}(z)}=\frac{\rho^{1-a}}{2r(t)a}(|a +1-i \cot \lambda_t|+|a -1+i \cot \lambda_t|).
\end{equation}
\medskip

We now formulate the main result of this section.
\begin{theorem}\label{star}
Let $\gamma$ be the boundary of a domain in $\R^2$ strictly starlike w.r.t. the
origin, and with a polar parametrization by a
homeomorphism $\varphi(e^{it})=r(t)e^{it} :S^1\to \gamma$. Let $a>0$ and
$\varphi_a: {\R} ^2\to {\R}^2$ be the radial extension of $\varphi$ with $\varphi_a(z)=|z|^a\varphi(z/|z|)$ and $\varphi_a(0)=0$.
Then
\begin{itemize}
    \item[a)] $\varphi_a$ is bi-Lipschitz if and only if the domain bounded by $\gamma$ satisfies the tangent condition for some $\alpha$ and $a=1$. Moreover,
\begin{eqnarray}\label{L2}
{\rm Lip}(\varphi_1)=L_1=\frac{r_{\mathrm{ max}}}{2}\left(\sqrt{\csc^2\alpha+3}+\sqrt{\csc^2\alpha-1}\right)
\end{eqnarray}
 and
\begin{eqnarray}\label{L1}
{\rm Lip}(\varphi^{-1}_1)=L_2 =\frac{1}{2r_{\mathrm{min}}}\left(\sqrt{\csc^2\alpha+3}+\sqrt{\csc^2\alpha-1}\right).
\end{eqnarray}
Here $r_{\mathrm{ max}}$ and $r_{\mathrm{ min}}$ are the maximum and the minimum of the function $r$, respectively.
If $\gamma$ satisfies the tangent condition for some $\alpha$, and if by $\alpha=\alpha(\gamma)$ we denote the supremum of all such constants, then for the bi-Lipschitz constant we have $L(\alpha)=\max\{L_1,L_2\}$.

We say that $\gamma_n$ tends to a circle $S^1(0,d)$ if $\alpha(\gamma_n)$ tends to $\pi/2$. Then
$$
\lim_{n\to \infty}L(\alpha)=\max\left\{d,1/d \right\}.
$$
    \item[b)] $\varphi_a$ is quasiconformal if and only if the domain bounded by $\gamma$ satisfies the tangent condition for some $\alpha\in(0,\pi/2]$. In this case the constant of quasiconformality is
         \begin{equation}\label{ka}K_a=\frac{1}{4a}\left(\sqrt{(a-1)^2 + \cot^2\alpha}+\sqrt{(1 + a)^2 + \cot^2\alpha}\right)^2.
         \end{equation} The minimal constant of quasiconformality is attained by ${\varphi^\circ}(z)=|z|^{\csc \alpha}\varphi(z/|z|)$ with ${\varphi^\circ}(0)=0$ and for this mapping we have $$k=\tan(\frac{\pi}{4}-\frac{\alpha}{2}),\ \
K=\cot\frac{\alpha}{2}.$$
\end{itemize}
\end{theorem}
\begin{proof}
 a). If $\varphi_a$ is bi-Lipschitz  then we have $a=1$ by \eqref{bil} and \eqref{expl0}. By Lemma \ref{ms1} and Theorem \ref{multi} we conclude that $\varphi_1$ is bi-Lipschitz if and only if the domain bounded by $\gamma$ satisfies the $\alpha$-tangent condition.

 The Lipschitz constants $L_1, L_2$ follow from \eqref{lamd1}, \eqref{lamd2},\eqref{expl0} \eqref{drita}, and \eqref{bil}.

To show $\lim_{n\to \infty} L=\lim_{\alpha\to \frac{\pi}{2}}L=\max\left\{d,{d^{-1}}\right\}$, we use (\ref{rdot}). Then
$$\log(r_n(t))-\log(r_n(0))=\int_{0}^t \cot \lambda_s\, ds.$$
Without loss of generality we may assume that $r_n(0)=\mathrm{dist}(\gamma_n, 0)$.
Then $$r_n(t)\le \mathrm{dist}(\gamma_n, 0)\exp(t\cot \alpha),$$
and therefore
$$\mathrm{dist}(\gamma_n, 0)\le r(t)\le\mathrm{dist}(\gamma_n, 0)\exp(2\pi \cot \alpha)\rightarrow \mathrm{dist}(S^1(0,r), 0)=r,\,\,\, \alpha\rightarrow \frac{\pi}{2}.$$

Thus by \eqref{L2} and \eqref{L1} we have
$$\lim_{\alpha\to \frac{\pi}{2}}L=\max\left\{d,{d^{-1}}\right\}.$$
\comment{\begin{equation}\label{L}
1\le L\le \frac{\exp(\pi \cot
\alpha)}{2\sin\alpha}\left(\sqrt{1-\sin^2\alpha}+\sqrt{1+3\sin^2\alpha}\right).
\end{equation}}

b). If the domain bounded by $\gamma$ satisfies the $\alpha$-tangent condition, then $\varphi_1$ is bi-Lipschitz by a). Let $R(1/r,r)=B^2(r)\setminus\overline B^2(1/r)$, $r>1$. The function $g(z)=|z|^{a-1}$ is locally Lipschitz in $\overline{R(1/r,r)}$ and hence $g$ is ACL and a.e. differentiable in $R(1/r,r)$. Therefore we have that $\varphi_a=g\cdot \varphi_1$ is ACL and a.e. differentiable in $R(1/r,r)$. Moreover, by \eqref{expl0} and \eqref{drita}, for $x\in R(1/r,r)$
$$H(\varphi'_a(x))\le K_a=\frac{1}{4a}\left(\sqrt{(a-1)^2 + \cot^2\alpha}+\sqrt{(1 + a)^2 + \cot^2\alpha}\right)^2,$$
which implies that $\varphi_a$ is
$K_a$--quasiconformal in $R(1/r,r)$. Letting $r\to \infty$, we see that $\varphi_a$ is $K_a$-quasiconformal in $\R^2\setminus\{0\}$. Since an isolated boundary point is removable singularity, we obtain that $\varphi_a$ is $K_a$-quasiconformal in $\R^2$.

We now prove the reverse implication.
We have to  show that if $\varphi_a$ is $K_a$-quasiconformal then the domain bounded by $\gamma$ satisfies the $\alpha$-tangent condition.
We know that if $\varphi_a$ is $K_a$-quasiconformal, then $\varphi_a$ is ACL and differentiable a.e. and hence $\phi(t)=\varphi(e^{it})$ is absolutely continuous and a.e. differentiable. By virtue of \eqref{expl0} and \eqref{drita}, the domain  bounded by $\gamma$ satisfies the $\alpha$-tangent condition a.e., i.e. for a.e. $t$
\begin{eqnarray*}
\frac{1}{4a}\left(\sqrt{(a-1)^2 + \cot^2\lambda_t}\right.&+&\left.\sqrt{(1 + a)^2 + \cot^2\lambda_t}\right)^2\\
&\le& K_a= \frac{1}{4a}\left(\sqrt{(a-1)^2 + \cot^2\alpha}+\sqrt{(1 + a)^2 + \cot^2\alpha}\right)^2
\end{eqnarray*}
implying that $\lambda_t\ge \alpha$ for a.e. $t$. Here $\alpha$ is chosen so that formula \eqref{ka} holds. Such a value $\alpha>0$ exists because $$\lim_{\alpha\to 0}\frac{1}{4a}\left(\sqrt{(a-1)^2 + \cot^2\alpha}+\sqrt{(1 + a)^2 + \cot^2\alpha}\right)^2=\infty$$
and
$$\lim_{\alpha\to \pi/2}\frac{1}{4a}\left(\sqrt{(a-1)^2 + \cot^2\alpha}+\sqrt{(1 + a)^2 + \cot^2\alpha}\right)^2=\frac{1}{4a}(|1-a|+|1+a|)^2.$$ At this stage we can end the proof by invoking a result of Sugawa \cite[Theorem~1]{su}, however we include detailed self-contained proof. So we prove that $\lambda_t\ge \alpha$ everywhere.
Without loss of generality, we may assume that $\phi(0)=1$ is the non-smooth point and let $\beta=\liminf_{z\to 1}\alpha(z,1)$ (recall that $\alpha(z,1)$ is the acute angle between $z-1$ and $1$). Since $\phi$ is absolutely continuous, we have
$$\phi(t)=1+\int_0^t\psi(s) ds,\,\,\,\,\psi\in L^1([0,2\pi])$$
and
$$\phi'(t)=\psi(t)\,\, a.e.$$
Let $\phi(s)\in\gamma$ with $s\in(0,t)$ be the smooth point and $t\in (0,\pi/2)$. Let $\theta$ be the acute angle between $[0,1]$ and $[\phi(t),1]$. Let $\beta(s)={\rm arg}\, \psi(s)$ and $\lambda_s$ be the acute angle between $[0, \phi(s)]$ and tangent line of $\gamma$ at $\phi(s)$. Then
\begin{equation}\label{oro}
\beta(s)=s+\lambda_s\,\,\,\,\,\,{\rm or}\,\,\,\,\,\,\beta(s)=s+\pi-\lambda_s
\end{equation}
and
\begin{equation}\label{cost}
\cos\theta=\frac{|\Re(\phi(t)-\phi(0))|}{|\phi(t)-\phi(0)|}=
\frac{|\int_0^t\cos\beta(s)|\psi(s)|ds|}{\sqrt{\left(\int_0^t\cos\beta(s)|\psi(s)|ds\right)^2+\left(\int_0^t\sin\beta(s)|\psi(s)|ds\right)^2}}.
\end{equation}
Now we consider the quantity
$$A(t)=\frac{|\int_0^t\sin\beta(s)|\psi(s)|ds|}{|\int_0^t\cos\beta(s)|\psi(s)|ds|}.$$
By \eqref{oro} we have  $\sin\beta(s)=\sin(\lambda_s+s)$ or $\sin\beta(s)=\sin(\lambda_s-s)$ and $\cos\beta(s)=\cos(\lambda_s+s)$ or $\cos\beta(s)=-\cos(\lambda_s-s)$. Then we have
$$A(t)=\frac{|\int_0^t\sin(\lambda_s+\epsilon(s)s)|\psi(s)|ds|}{|\int_0^t\cos(\lambda_s+\varepsilon(s)s)|\psi(s)|ds|},$$
where $\epsilon(s),\varepsilon(s)\in \{-1,1\}$.

We now use the fact that $\lambda_s\ge\alpha$. Then for small enough $t$ (depending on $\alpha$) we obtain
$$A(t)\ge B(t)=\frac{|\int_0^t\sin(\alpha-s)|\psi(s)|ds|}{|\int_0^t\cos(\alpha- s)|\psi(s)|ds|}\ge \frac{|\sin(\alpha-t)|\int_0^t|\psi(s)|ds}{|\cos(\alpha-t)|\int_0^t|\psi(s)|ds}=\tan(\alpha-t).$$
Since
$$\cos\theta=\frac{1}{\sqrt{1+A(t)^2}},$$
we obtain that
$$\liminf_{t\to 0} \cos\theta= \liminf_{t\to 0} \frac{1}{\sqrt{1+A(t)^2}}\le \liminf_{t\to 0} \frac{1}{\sqrt{1+B(t)^2}}=\cos\alpha.$$
Thus $\beta\ge \alpha$ as desired. The proof of the last statement of b) follows from the considerations carried out before the formulation of the theorem.
\end{proof}

We now generalize Theorem~\ref{star} to the $n$-dimensional case.

\begin{theorem}\label{star1}
Let $\mathcal{M}$ be the boundary of a domain in $\R^n$ ($n\ge 3$) strictly starlike w.r.t. the
origin and with a polar parametrization by a
homeomorphism $\varphi(x)=r(x)x :S^{n-1}\to \mathcal{M}$. Let
$\varphi_a: {\R}^n\to {\R}^n$ be the radial extension of $\varphi$, defined by $\varphi_a(x)=|x|^a\varphi(x/|x|)$ i.e. $\varphi_a(x)=|x|^{a-1}R(x)x$, and $\varphi_a(0)=0$, where $R(x)=r(x/|x|)$ is a positive real function. Then

\begin{itemize}
    \item[a)] $\varphi_a$ is bi-Lipschitz if and only if the domain bounded by $\mathcal{M}$ satisfies the tangent condition for some $\alpha\in(0,\pi/2]$,  and $a=1$. Moveover,
\begin{eqnarray}\label{L}
{\rm Lip}(\varphi_1)=L_1=\frac{r_{max}}{2}\left(\sqrt{\csc^2\alpha-1}+\sqrt{\csc^2\alpha+3}\right)
\end{eqnarray}
and
\begin{equation}\label{l1m}
{\rm Lip}(\varphi^{-1}_1)=L_2=\frac{1}{r_{min}}\left(\sqrt{\csc^2\alpha-1}+\sqrt{\csc^2\alpha+3}\right).
\end{equation}
If $\alpha=\alpha(\mathcal{M})$ is the maximum of all positive numbers for which $\mathcal{M}$ satisfies the $\alpha$-tangent condition, then the bi-Lipschitz constant is given by
\begin{equation}\label{qinl}
L(\alpha)=\max\{L_1,L_2\}.
\end{equation}
We say that the sequence $\mathcal{M}_n$ tends to a sphere $S(0,d)$ if $\alpha(\mathcal{M}_n)$ tends to $\pi/2$. In this case
$$\lim_{n\to \infty }L(\alpha)=\max\left\{d,1/d \right\}.$$
\end{itemize}

   \begin{itemize}
    \item[b)]
 $\varphi_a$ is $K_a$-quasiconformal if and only if the domain bounded by $\mathcal{M}$ satisfies the $\alpha$-tangent condition. The constant of quasiconformality is
 $$K_a=\frac{1}{4a}\left(\sqrt{(a-1)^2 + \cot^2\alpha}+\sqrt{(1 + a)^2 + \cot^2\alpha}\right)^2.$$
 The minimal constant of quasiconformality is attained for $a=\csc \alpha$ and the mapping ${\varphi^\circ}(z)=|z|^{\csc \alpha}\varphi(z/|z|)$ with ${\varphi^\circ}(0)=0$ is $K$-quasiconformal with
 $$K=\cot\frac{\alpha}{2}.$$
\end{itemize}
\end{theorem}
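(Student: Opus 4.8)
The plan is to reduce the whole statement to the two‑dimensional formulas \eqref{expl0}--\eqref{drita} by computing the singular values of $\varphi_a'(x)$ at a single point, in coordinates adapted to that point. Fix $x=\rho\omega$ with $\rho=|x|>0$, $\omega\in S^{n-1}$. Under the $\alpha$‑tangent condition $\varphi$ is bi‑Lipschitz by Theorem \ref{multi}, so $r$ is Lipschitz on $S^{n-1}$ and differentiable at a.e.\ $\omega$; fix such an $\omega$ and let $g=\nabla_{S^{n-1}}r(\omega)$ be the spherical gradient (if $g=0$ the planar block below is trivial). Split $\R^n=\R\omega\oplus T_\omega S^{n-1}$ and, inside $T_\omega S^{n-1}$, split off $\R\,g$. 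Differentiating $\varphi_a(\rho\omega)=\rho^{a}r(\omega)\omega$ along $s\mapsto\rho(s)\omega(s)$ with $\dot\rho=u$, $\rho\dot\omega=w$ gives
\[\varphi_a'(x)(u\omega+w)=\rho^{a-1}\bigl[(a\,r\,u+\langle g,w\rangle)\,\omega+r\,w\bigr],\]
so $\varphi_a'(x)$ is block diagonal with respect to $\R\omega\oplus\R g\oplus(g^{\perp}\cap T_\omega S^{n-1})$: on the $(n-2)$‑dimensional block it is the scalar $\rho^{a-1}r(x)$, and on $\mathrm{span}(\omega,g/|g|)$ it is $\rho^{a-1}r(x)M$ with $M=\begin{pmatrix}a&\cot\alpha(x)\\0&1\end{pmatrix}$, where we use the identity $|g(\omega)|/r(\omega)=\cot\alpha(x)$: the outer normal of $\mathcal M$ at $x=r(\omega)\omega$ is proportional to $\omega-g/r$, so it makes an angle $\theta$ with the ray $0x$ having $\tan\theta=|g|/r$, whence $\cot\alpha(x)=\cot(\pi/2-\theta)=|g|/r$ by the definition of $\alpha(x)$.

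Since $\det M=a$, the singular values of $M$ are $\Lambda_M=\tfrac12\bigl(\sqrt{(a+1)^2+\cot^2\alpha(x)}+\sqrt{(a-1)^2+\cot^2\alpha(x)}\bigr)$ and $\lambda_M=a/\Lambda_M$, and $\Lambda_M\ge\max(a,1)\ge1\ge\lambda_M$; because $1\in[\lambda_M,\Lambda_M]$, the $(n-2)$ remaining singular values $\rho^{a-1}r(x)$ are sandwiched between the other two, so $\Lambda_{\varphi_a}(x)=\rho^{a-1}r(x)\Lambda_M$ and $\lambda_{\varphi_a}(x)=\rho^{a-1}r(x)\lambda_M$ — these are precisely \eqref{expl0} and \eqref{drita}, and $H(\varphi_a'(x))=\Lambda_M/\lambda_M=\tfrac1{4a}\bigl(\sqrt{(a+1)^2+\cot^2\alpha(x)}+\sqrt{(a-1)^2+\cot^2\alpha(x)}\bigr)^2$. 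Now part a) follows as in Theorem \ref{star}: $a=1$ is forced by \eqref{bil}, since for $a\neq1$ the norm $\Lambda_{\varphi_a}(x)=|x|^{a-1}r(x/|x|)\Lambda_M$ is unbounded (as $|x|\to\infty$ if $a>1$, as $|x|\to0$ if $a<1$); conversely the $\alpha$‑tangent condition gives $\varphi$ bi‑Lipschitz (Theorem \ref{multi}), hence $\varphi_1$ bi‑Lipschitz (Lemma \ref{ms1}), and \eqref{L}, \eqref{l1m} follow from \eqref{bil} and the formulas for $\Lambda_{\varphi_1},\lambda_{\varphi_1}$, each being monotone in $\alpha(x)$ with $\alpha(x)\ge\alpha$. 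The limit as $\alpha\to\pi/2$ uses $|\nabla_{S^{n-1}}\log r|=\cot\alpha(x)\le\cot\alpha$ a.e., so $\log|r|_{\max}-\log|r|_{\min}\le\pi\cot\alpha\to0$ and both $|r|_{\max},|r|_{\min}\to\mathrm{dist}(\mathcal M,0)$.

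For part b), if the $\alpha$‑tangent condition holds then $\varphi_a=|x|^{a-1}\varphi_1$ is, on each annulus centred at $0$, a product of the bi‑Lipschitz $\varphi_1$ and the locally Lipschitz $|x|^{a-1}$, hence ACL and a.e.\ differentiable there, with $H(\varphi_a'(x))\le K_a$ (monotonicity in $\alpha(x)$ again); letting the annuli exhaust $\R^n\setminus\{0\}$ and removing the isolated singularity at $0$ shows $\varphi_a$ is $K_a$‑quasiconformal. For the minimal constant: at $a=\csc\alpha=\sqrt{1+\cot^2\alpha}$ one has $(a\pm1)^2+\cot^2\alpha=2a(a\pm1)$, so $K_a=\tfrac12(\sqrt{a+1}+\sqrt{a-1})^2=a+\sqrt{a^2-1}=\csc\alpha+\cot\alpha=\cot\tfrac\alpha2$; that $a=\csc\alpha$ is the unique minimizer follows by writing $K_a=\frac{p+q}{p-q}$ with $p=\sqrt{(a+1)^2+\cot^2\alpha}$, $q=\sqrt{(a-1)^2+\cot^2\alpha}$, which is increasing in $q^2/p^2=\frac{(a-1)^2+\cot^2\alpha}{(a+1)^2+\cot^2\alpha}$, and differentiating the latter: its only critical point is $a^2=1+\cot^2\alpha$, a minimum.

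The main obstacle is the converse in b): $\varphi_a$ being $K_a$‑quasiconformal gives, via the formula for $H$, only $\alpha(x)\ge\alpha$ for a.e.\ $x\in\mathcal M$, and by the Cantor‑type example in Remark \ref{alfabeta} this pointwise a.e.\ statement alone does not imply the $\alpha$‑tangent condition. One must therefore upgrade it to every $x\in\mathcal M$, and the plan is to adapt the integral estimate at the end of the proof of Theorem \ref{star}: quasiconformality puts $\varphi_a$ in $W^{1,n}_{\mathrm{loc}}$, so, in polar coordinates, $\varphi$ is absolutely continuous along a.e.\ curve of $S^{n-1}$, and along a.e.\ such curve $\alpha(\cdot)\ge\alpha$ holds a.e.; given a non‑smooth $x_0\in\mathcal M$ and a sequence $z_k\to x_0$ realizing $\alpha(x_0)$, join $\varphi^{-1}(x_0)$ to $\varphi^{-1}(z_k)$ by short arcs $\omega_k$ of this good type and express, in the two‑plane through $0$, $x_0$, $z_k$, the cosine of the angle between $[z_k,x_0]$ and the ray $0x_0$ as a ratio of integrals of the sine and cosine of the argument of $\tfrac{d}{ds}\varphi(\omega_k(s))$, as in \eqref{oro}--\eqref{cost}; using $\alpha_s\ge\alpha$ a.e.\ along $\omega_k$ together with the fact that the spherical displacement of a short arc is negligible yields $\liminf_k\alpha(z_k,x_0)\ge\alpha$, hence $\alpha(x_0)\ge\alpha$. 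The ``only if'' in a) is then immediate, since a bi‑Lipschitz $\varphi_1$ is $K_1$‑quasiconformal.
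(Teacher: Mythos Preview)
Your proof is correct and takes a genuinely different route in the core computation.

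\textbf{Derivative computation.} The paper reduces to two dimensions dynamically: it picks a unit vector $h$ realizing $\Lambda_{\varphi_a}(x)$, observes that the formula $\varphi'_a(x)h=|x|^{a-3}\bigl(\langle\nabla R,h\rangle|x|^2+(a-1)\langle x,h\rangle R\bigr)x+|x|^{a-1}Rh$ forces $\varphi'_a(x)h$ into $\mathrm{span}(x,h)$, conjugates by an orthogonal $T$ sending $\C$ onto this plane, and then quotes \eqref{expl0}--\eqref{drita}; a separate (possibly different) plane and angle $\alpha_{x,h'}$ are used for $\lambda_{\varphi_a}$. Your block decomposition $\R\omega\oplus\R g\oplus g^{\perp}$ is more intrinsic: it exhibits the full singular spectrum $\rho^{a-1}r\{\Lambda_M,1,\ldots,1,\lambda_M\}$ at once, shows that the extremal two-plane is the one containing $\nabla_{S^{n-1}}r$, and makes the identity $|g|/r=\cot\alpha(x)$ explicit via the normal $\omega-g/r$. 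This buys you the $(n-2)$ intermediate singular values for free (useful if one wants $K_I$, $K_O$ separately, as in Remark~\ref{RR}) and makes transparent why $n\ge3$ changes nothing in the final formula for $H$. The paper's rotation argument, by contrast, emphasizes that every pair $0,x,y,\varphi_a(x),\varphi_a(y)$ is coplanar, which is what it also invokes for part a).

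\textbf{Converse in b).} Here the paper is terse: it simply writes ``by Theorem~\ref{star}~b)''. Your sketch is more explicit and rightly isolates the obstacle (a.e.\ $\alpha$-tangent does not imply everywhere $\alpha$-tangent, cf.\ Remark~\ref{alfabeta}). Your plan is sound; note however that it simplifies if you take each $\omega_k$ to be the great-circle arc from $\varphi^{-1}(x_0)$ to $\varphi^{-1}(z_k)$: then $\varphi\circ\omega_k$ stays in the two-plane $\Sigma_k=\mathrm{span}(x_0,z_k)$, so the integral estimate \eqref{oro}--\eqref{cost} applies verbatim with no need to control transverse displacement. One must still perturb $z_k$ slightly so that $\Sigma_k$ lies in the full-measure family of planes on which $\varphi$ is absolutely continuous with $\alpha_s\ge\alpha$ a.e.; this is a routine Fubini/density argument. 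Read this way, your sketch and the paper's one-line citation are the same idea, but you have unpacked it.
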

\begin{proof}
a) For every $x,y\in \R^n$, the points $0,x,y,\varphi_a(x),\varphi_a(y)$ are in the same plane. Therefore by using the similar argument as in Theorem~\ref{star} a),  we see that the $\alpha$-tangent condition of the domain bounded by $\mathcal{M}$ is equivalent to the bi-Lipschitz continuity of the radial extension $\varphi_1$.

b) 
Assume now that the domain bounded by $\mathcal{M}$ satisfies the $\alpha$-tangent condition. By an argument similar to Theorem~\ref{star} b) we have that $\varphi_a$ is quasiconformal in $R(1/r,r)=B^n(r)\setminus\overline B^n(1/r)\supset \mathcal{M}$. Then it is differentiable a.e. in $R(1/r,r)$.
Let $x=(x_1,\dots,x_n)\in R(1/r,r)\setminus E$, where the Lebesgue measure $|E|=0$, and let $e_k$, $k=1,\dots,n$ denote the standard orthonormal basis  and let us find $\lambda_{\varphi_a}(x)$ and $\Lambda_{\varphi_a}(x)$.
Since $$\Lambda_{\varphi_a}(x)=\sup_{|h|=1}|\varphi'_a(x)h|$$ and $S^{n-1}$ is compact, there exists $h\in S^{n-1}$ such that $\Lambda_{\varphi_a}(x)=|\varphi'_a(x)h|$. Let $\Sigma$ be the $2$-dimensional plane passing through $0,x,h$ and let $\Sigma'$ be another $2$-dimensional plane passing through $0,\varphi_a(x),\varphi'_a(x)h$. Since $\varphi_a(x)=|x|^{a-1}R(x)x$ , we get
$$\varphi'_a(x)h=|x|^{a-3}\left(\left<\mathrm{grad} R(x)|h\right>|x|^2+(a-1)\left<x|h\right>R(x)\right)x+|x|^{a-1}R(x)h,$$
 which implies that $\Sigma'$ can be chosen to be equal to $\Sigma$. Let $T$ be an orthogonal transformation which maps the plane $\C'=\{(x_1,x_2,0,\dots,0):x_1+ix_2\in\mathbb{C}\}\cong\mathbb{C}$ onto $\Sigma$ such that $T(|x|e_1)=x$ and $T(\cos \theta e_1+\sin\theta e_2)=h$. Here $\theta$ satisfies $|x|\cos\theta=\left<x| h\right>$. The mapping $T$ is a linear isometry of $\mathbb{R}^n$. Define
$$\tilde{\varphi_a}(y_1,y_2)=P(T^{-1}\varphi_a(T(y_1,y_2,0,\dots,0))),$$
where $P:\C'\to\mathbb{C}$ is the isometry $P((x_1,x_2,0,\dots,0))=x_1+ix_2$.

Then $\tilde \varphi_a(|x|,0)=P T^{-1}(\varphi_a(x))$ and
 \[\begin{split}
 |\tilde \varphi'_a(|x|,0)|&=\sup_{\beta}|\tilde \varphi'_a(|x|,0)(\cos \beta,\sin\beta )|\\
 &=\sup_{\beta}|(PT^{-1}\cdot \varphi'_a(x)\cdot T)(\cos \beta e_1+\sin\beta e_2)|\le |\varphi'_a(x)|.
 \end{split}\]
By choosing $\beta=\theta$, we see that
$$|(PT^{-1}\cdot \varphi'_a(x)\cdot T)(\cos \beta e_1+\sin\beta e_2)|=|\varphi'_a(x)h|=|\varphi'_a(x)|,$$
which implies $|\tilde \varphi'_a(|x|,0)|=|\varphi'_a(x)|$.
By making use of the proof of two dimensional case \eqref{expl0}, we obtain that
$$\Lambda_{\varphi_a}(x)=\Lambda_{\tilde \varphi_a}((|x|,0))=\frac{r(x)|x|^{a-1}}{2}\left(\sqrt{(a-1)^2 + \cot^2\alpha_{x,h}}+\sqrt{(1 + a)^2 + \cot^2\alpha_{x,h}}\right).$$
Here $\alpha_{x,h}$ is the acute angle between the tangent line on $\mathcal{M}\cap\Sigma$ at $\varphi(x/|x|)$ and the vector $\varphi_a(x)$.

Similar arguments and \eqref{drita} yield that
$$\lambda_{\varphi_a}(x)=\lambda_{\tilde \varphi_a}(|x|,0)=\frac{2ar(x)}{|x|^{1-a}}\left(\sqrt{(a-1)^2 + \cot^2\alpha_{x,h'}}+\sqrt{(1 + a)^2 + \cot^2\alpha_{x,h'}}\right)^{-1},$$
where the angle $\alpha_{x,h'}$ is possibly different from $\alpha_{x,h}$.
Thus we have
\[\begin{split}
H(\varphi'_a(x))=\frac{\Lambda_f(x)}{\lambda_f(x)}&=\frac{1}{4a}\left(\sqrt{(a-1)^2 + \cot^2\alpha_{x,h}}+\sqrt{(1 + a)^2 + \cot^2\alpha_{x,h}}\right)\\
&\cdot\left(\sqrt{(a-1)^2 + \cot^2\alpha_{x,h'}}+\sqrt{(1 + a)^2 + \cot^2\alpha_{x,h'}}\right).
\end{split}\]
Hence $f$ is $K_a$-quasiconformal in $R(1/r,r)$ with  $$K_a=\frac{1}{4a}\left(\sqrt{(a-1)^2 + \cot^2\alpha}+\sqrt{(1 + a)^2 + \cot^2\alpha}\right)^2.$$
Letting $r\to \infty$ , we have that $\varphi_a$ is $K_a$-quasiconformal in $\R^n\setminus\{0\}$ and hence in $\R^n$.

On the other hand, if $\varphi_a$ is $K_a$-quasiconformal in $\R^n$, then by Theorem \ref{star} b) we obtain that the domain bounded by  $\mathcal{M}$ satisfies  the $\alpha$-tangent condition.

For $a=\csc\alpha$, applying the formula of $K_a$ to ${\varphi^\circ}(z)=|z|^{\csc \alpha}\varphi(z/|z|)$, we obtain $K=\cot\alpha/2$, and this is the minimal constant of quasiconformality.

 This completes the proof.
\end{proof}

\begin{example}\label{abc}
a) Let $\mathcal{M}=\partial D$, where $D$ is the cone $\{(x,y,z):z<2-\sqrt{3(x^2+y^2)}, -1< z< 2\}$. Then $\alpha=\pi/6$ and hence ${H}(\varphi^\circ)=2+\sqrt{3} \approx 3.73$.

b) Let $\mathcal{M}=\partial D$, where $D$ is the cylinder $\{(x,y,z):x^2+y^2=1,  -1\le z\le 1\}$. Then $\alpha=\pi/4$ and hence
${H}(\varphi^\circ)=\sqrt{2}+1 \approx 2.41$.

c) Let $\mathcal{M}=\partial{D}$, where $D=[-1,1]^3$ is the cube. Then $\sin\alpha=1/\sqrt{3}$ and hence ${H}(\varphi^\circ)=\sqrt{3}+\sqrt2\approx 3.15$.

d) Let $\mathcal{M}=\partial D$, where $D$ is the ellipsoid $\{(x_1,\dots,x_n):(x_1/a_1)^2+\dots+(x_n/a_n)^2\le 1\}$, $0<a_1\le\dots\le a_n$. We first consider the case of the ellipse $\{(x,y):(x/a)^2+(y/b)^2\le1,0<a<b\}$ whose polar parametrization is
$$r(t)=\frac{ab}{\sqrt{b^2\cos^2t+a^2\sin^2t}},\,\,\,t\in[0,2\pi].$$
By using the argument of symmetry it suffices to consider $t\in[0,\pi/2]$. By \eqref{rdot}, we have
$$0\le\cot\lambda_t=\frac{r'(t)}{r(t)}=\frac{b^2-a^2}{b^2\frac{\cos t}{\sin t}+a^2\frac{\sin t}{\cos t}}\le\frac{b^2-a^2}{2ab}.$$
The equality holds if and only if $\tan t=b/a>1$. Then
$$\cot\frac{\lambda_t} 2=\sqrt{1+\cot^2\lambda_t}+\cot\lambda_t\le\frac ba.$$
Therefore for the ellipsoid the angle $\alpha$ is minimized in the ellipse $(x/a_1)^2+(y/a_n)^2\le1$ and its value is $\alpha=2\arctan(a_1/a_n)$ and ${H}(\varphi^\circ)=a_n/a_1$. Since the linear dilatation of linear mapping $L(x_1,\dots,x_n)=(a_1x_1,\dots,a_nx_n)$ is as well equal to $a_n/a_1$, one may expect that this is the best possible constant of quasiconformality for the ellipsoid ($n\ge 3$). But this is not the case. Concerning this problem we refer to the paper of Anderson \cite{ander}.
\end{example}
\begin{figure}[h]
\subfigure[]{\includegraphics[width=.2 \textwidth]{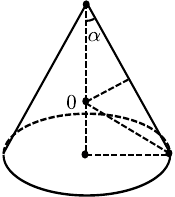}}
\hspace{.1 \textwidth}
\subfigure[]{\includegraphics[width=.15\textwidth]{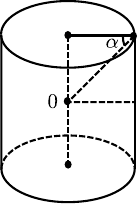}}
\hspace{.1 \textwidth}
\subfigure[]{\includegraphics[width=.2\textwidth]{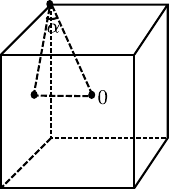}}
\caption{Example \ref{abc}: (a) cone (b) cylinder (c) cube}
\end{figure}

\begin{remark}\label{RR}
In \cite[Lemma~2.7~and~Corollary~2.8]{mar2} the authors prove that the radial stretching mapping is bi-Lipschitz and quasiconformal provided that the domain bounded by $\mathcal{M}$ satisfies the $\beta$-cone condition. Our theorem is somehow optimal, since we have concrete and approximately sharp bi-Lipschitz and quasiconformal constants. Further in \cite[Theorem~5.1]{gv} F.W. Gehring and J. V\"ais\"al\"a obtained some explicit estimates of $K_I$ and $K_O$ for a domain $D$ satisfying the $\alpha$-tangent condition in the three dimensional case. Here $K_I=\inf K_I(f)$ and $K_O=\inf K_O(f)$, where $f$  runs through q.c. mappings of the domain $D$ onto the unit ball $\B^3\subset \R^3$.   Indeed they proved that
\begin{equation}\label{inner}
K_I\le 2^{-1/2}\cot\frac{\alpha}{2}\csc\frac{\alpha}{2},
\end{equation}
\begin{equation}\label{outer}
K_O\le 2^{1/2}\cot\frac{\alpha}{2}\cos\frac{\alpha}{2}.
\end{equation}
Furthermore \eqref{inner} and \eqref{outer} are obtained by making use of a mapping which is in fact the inverse of our mapping ${\varphi^\circ}$ in Theorem~\ref{star1}. This implies that a part of the statement of Theorem~\ref{star1} is not new, at least for three dimensional case. Let $H(f)={\rm ess}\sup_x H(f'(x))$ and $H=\inf H(f)$, where $f$  runs through q.c. mappings of the unit ball $\B^n\subset \R^n$ onto the domain $D$, then we have that
\begin{equation}\label{lineare}
H\le H({\varphi^\circ})=\cot\frac{\alpha}{2}.
\end{equation}
\end{remark}

By \eqref{lineare}, we obtain that
\begin{corollary}
 Let $D\subset\RN$ be the convex domain with $B^n(0,a)\subset D\subset B^n(0,b)$, $0<a<b$. Then $$H\le H(\varphi^\circ)\le\frac{b+\sqrt{b^2-a^2}}{a}<2\cdot\frac{b}{a}.$$
\end{corollary}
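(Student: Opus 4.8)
The plan is to show that $D$ satisfies the $\alpha$-tangent condition with $\alpha=\arcsin(a/b)\in(0,\pi/2)$ and then to invoke \eqref{lineare}; the stated value of $H({\varphi^\circ})$ and the final strict inequality are then elementary. Since $D$ is open, convex, bounded and contains $0\in B^n(0,a)$, it is strictly starlike w.r.t.\ $0$: every ray from $0$ meets $\partial D$ in exactly one point, because $[0,w)\subset D$ for each $w\in\overline D$ and $D$ is bounded.

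Fix $x\in\partial D$; recall $\alpha(x)=\liminf_{z\to x,\ z\in\partial D}\alpha(z,x)$. For $z\in\partial D$ close to $x$ with $z\neq x$, strict starlikeness shows $z\notin\R_+x$, so $0,x,z$ span a plane, and an elementary right-triangle computation there gives
$$
\sin\alpha(z,x)=\frac{\dist(0,L_{x,z})}{|x|}=\frac{1}{|x|}\sqrt{|x|^{2}-\langle x,u\rangle^{2}},\qquad u=\frac{z-x}{|z-x|},
$$
where $L_{x,z}$ is the line through $x$ and $z$. Choose $z_k\to x$ in $\partial D$ with $\alpha(z_k,x)\to\alpha(x)$ and, after passing to a subsequence, $u_k:=(z_k-x)/|z_k-x|\to u\in S^{n-1}$. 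By continuity $\sin\alpha(x)=|x|^{-1}\sqrt{|x|^{2}-\langle x,u\rangle^{2}}$, so it remains to prove $|x|^{2}-\langle x,u\rangle^{2}\ge a^{2}$.

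The heart of the matter is to show that $u$ lies on the boundary of the tangent cone $T:=\mathrm{cl}\{t(y-x):t\ge0,\ y\in D\}$, a closed convex cone with nonempty interior. Certainly $u\in T$. If $u$ were an interior direction of $T$ then, since $\mathrm{int}\,T$ consists of vectors $t_{0}(y_{0}-x)$ with $y_{0}\in D$, $t_{0}>0$, we could write $u=t_{0}(y_{0}-x)$ and then $x+su=(1-st_{0})x+st_{0}y_{0}\in D$ for all small $s>0$, being a convex combination of $x\in\overline D$ and $y_{0}\in D$; since $D$ is open the same holds with $u$ replaced by every sufficiently near unit vector, so $z_k=x+|z_k-x|\,u_k\in D$ for large $k$, contradicting $z_k\in\partial D$. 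Hence $u\in\partial T$, and $T$ has a supporting hyperplane there: a unit vector $\nu$ with $\langle v,\nu\rangle\le\langle u,\nu\rangle$ for all $v\in T$. As $0\in T$ and $2u\in T$, this forces $\langle u,\nu\rangle=0$ and then $\langle v,\nu\rangle\le0$ for all $v\in T\supset D-x$; so $\nu$ is an outward unit normal of a supporting hyperplane of $D$ at $x$, and $B^n(0,a)\subset D$ yields $\langle x,\nu\rangle\ge\sup_{|y|<a}\langle y,\nu\rangle=a$. Writing $x=\langle x,\nu\rangle\nu+x'$ with $x'\perp\nu$, and using $u\perp\nu$, $|u|=1$, we obtain
$$
\langle x,u\rangle^{2}=\langle x',u\rangle^{2}\le|x'|^{2}=|x|^{2}-\langle x,\nu\rangle^{2}\le|x|^{2}-a^{2},
$$
so $|x|^{2}-\langle x,u\rangle^{2}\ge a^{2}$. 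Hence $\sin\alpha(x)\ge a/|x|\ge a/b$, i.e.\ $\alpha(x)\ge\arcsin(a/b)$ for every $x\in\partial D$, so $D$ satisfies the $\alpha$-tangent condition with $\alpha=\arcsin(a/b)$.

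Finally, \eqref{lineare} gives $H\le H({\varphi^\circ})=\cot\frac{\alpha}{2}=\csc\alpha+\cot\alpha$, and with $\sin\alpha=a/b$, $\cos\alpha=\sqrt{b^{2}-a^{2}}/b$ this equals $(b+\sqrt{b^{2}-a^{2}})/a$; since $a>0$ forces $\sqrt{b^{2}-a^{2}}<b$, it is $<2b/a$. I expect the main obstacle to be the tangent-cone dichotomy in the third paragraph: ruling out that the limiting chord direction $u$ is an interior direction of $T_xD$, and then selecting the one supporting hyperplane of $T_xD$ containing $u$, whose normal $\nu$ then satisfies $\langle x,\nu\rangle\ge a$. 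Everything else is plane geometry and Theorem~\ref{star1} via \eqref{lineare}.
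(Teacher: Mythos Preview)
Your proof is correct and follows exactly the route the paper indicates: the paper merely says ``By \eqref{lineare}, we can also easily conclude'' the corollary, leaving the verification that a convex $D$ with $B^n(0,a)\subset D\subset B^n(0,b)$ satisfies the $\alpha$-tangent condition with $\alpha=\arcsin(a/b)$ to the reader, and you supply that verification carefully via the tangent cone at each boundary point. The only cosmetic point is that the final paragraph of self-commentary (``I expect the main obstacle\ldots'') does not belong in the proof.
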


\begin{proof}
We can assume without loss of generality that $n=2$. Assume that $$\alpha=\lim_{m\to\infty}\angle(A_m, B_m, 0), $$ where $A_m,B_m\in D$. Since $D$ is convex and contains $B^2(0,a)$, it follows that the line $l=l(A_m,B_m)$ does not intersect the disk $B^2(0,a)$. Let $A_m', B_m'$ be the points in which the line $l$ intersects $S^1(0,b)$. Thus $\alpha\ge \limsup_{m\to\infty}\angle(A'_m, B'_m, 0)$. Since $\angle(A'_m, B'_m, 0)\ge \arcsin \frac{a}{b}$, we obtain that $\cot\frac{\alpha}{2}\le \frac{b+\sqrt{b^2-a^2}}{a}$. This finishes the proof.
\end{proof}
\section{bi-Lipschitz and quasiconformal constants for quasi-inversions}

In this section, we obtain the bi-Lipschitz constants of the quasi-inversion mappings w.r.t. the chordal metric, the M\"obius metric and Ferrand's metric by using the bi-Lipschitz constants of the radial extension maps. We also obtain asymptotically sharp constants of quasiconformality of  quasi-inversions. In order to explain sharpness we make the following definition.
 \begin{definition}
For $t\in[0,1]$, let $\mathcal{M}_t$ be the boundary of a domain in $\RN$, strictly starlike w.r.t. the origin which satisfies the $\alpha_{\mathcal{M}_t}$-tangent condition. Let $\varphi_t:\mathbb{R}^n\to \mathbb{R}^n$ be the radial extension which sends the unit sphere to $\mathcal{M}_t$. We say that $\mathcal{M}_t$ smoothly converges to the sphere $S^{n-1}(r)$, when $t$ goes to $0$, if $\lim_{t\to 0}{\rm ess}\sup_{|x|=1}|\varphi'_t(x)-r\mathbf{I}|=0.$ This in particular means that $\lim_{t\to 0}\alpha_{\mathcal{M}_t}=\pi/2$. Here $\mathbf{I}$ is the identity matrix.
\end{definition}

\begin{lemma}\label{deqin}
Let $a>0$ and $\mathcal{M}$ be the boundary of a domain in $\RN$, strictly starlike w.r.t. the origin and $\varphi: S^{n-1}\to \mathcal{M}$ be the homeomorphism which sends $R\cap S^{n-1}$ to $R\cap \mathcal{M}$, $R$ is the ray from $0$. Let $\varphi_a$ be the radial extension of $\varphi$ and $f_{\mathcal{M}}$ be the quasi-inversion in $\mathcal{M}$. Then $f_\mathcal{M}=\varphi_a\circ f_{S^{n-1}} \circ
\varphi_a^{-1}$.
\end{lemma}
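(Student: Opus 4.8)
The plan is to verify the stated factorization by a direct computation, exploiting that every map involved fixes each ray through the origin setwise. Write $x^{*}=x/|x|$ for $x\in\R^{n}\setminus\{0\}$ and $r(\zeta)=|\varphi(\zeta)|$ for $\zeta\in S^{n-1}$, so that $\varphi(\zeta)=r(\zeta)\zeta$ and hence $\varphi_{a}(y)=|y|^{a}\varphi(y^{*})=|y|^{a}r(y^{*})\,y^{*}$. In particular $(\varphi_{a}(y))^{*}=y^{*}$, and $\varphi_{a}$ is strictly increasing in $|y|$ along each fixed ray; this already shows that $\varphi_{a}$ is a homeomorphism of $\overline{\R}^{n}$ and gives the explicit inverse $\varphi_{a}^{-1}(x)=(|x|/r(x^{*}))^{1/a}\,x^{*}$ for $x\neq 0,\infty$, together with $\varphi_{a}^{-1}(0)=0$ and $\varphi_{a}^{-1}(\infty)=\infty$.

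First I would substitute this expression into $f_{S^{n-1}}$, which on $\R^{n}\setminus\{0\}$ is the ordinary inversion $z\mapsto z/|z|^{2}$, to obtain $f_{S^{n-1}}(\varphi_{a}^{-1}(x))=(r(x^{*})/|x|)^{1/a}\,x^{*}$, a point lying on the same ray $L_{x}$. Applying $\varphi_{a}$ to it and using $\varphi_{a}(\sigma x^{*})=\sigma^{a}r(x^{*})\,x^{*}$ with $\sigma^{a}=r(x^{*})/|x|$ yields
\[
\varphi_{a}\big(f_{S^{n-1}}(\varphi_{a}^{-1}(x))\big)=\frac{r(x^{*})^{2}}{|x|}\,x^{*}=\frac{r(x^{*})^{2}}{|x|^{2}}\,x,
\]
so the exponent $a$ cancels and only a radial rescaling survives. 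It then remains to note that $\varphi(x^{*})$ is by definition the unique point of $\mathcal{M}$ on the ray $L_{x^{*}}=L_{x}$ — this is where strict starlikeness of the domain bounded by $\mathcal{M}$ enters — whence $r(x^{*})=|\varphi(x^{*})|=r_{x}$ in the notation of the definition of $f_{\mathcal{M}}$; hence the right-hand side equals $f_{\mathcal{M}}(x)$.

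Finally I would dispose of the two exceptional points by chasing the conventions: $\varphi_{a}^{-1}$ sends $0$ to $0$, then $f_{S^{n-1}}$ sends $0$ to $\infty$, then $\varphi_{a}$ fixes $\infty$, so the composition sends $0$ to $\infty=f_{\mathcal{M}}(0)$; symmetrically it sends $\infty$ to $0=f_{\mathcal{M}}(\infty)$. There is no genuine obstacle in this argument; the only thing requiring care is the bookkeeping of the radial and angular components of $\varphi_{a}$ and $\varphi_{a}^{-1}$, and the observation at the end that the radial factors combine to exactly $r_{x}^{2}/|x|^{2}$, independently of $a$.
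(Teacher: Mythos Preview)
Your proof is correct and follows essentially the same direct-computation approach as the paper. The only cosmetic difference is that the paper verifies the equivalent identity $f_{\mathcal{M}}\circ\varphi_{a}=\varphi_{a}\circ f_{S^{n-1}}$, thereby avoiding the explicit formula for $\varphi_{a}^{-1}$, but the substance is identical.
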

\begin{proof}
It suffices to show that
$f_{\mathcal{M}} \circ \varphi_a=\varphi_a \circ f_{S^{n-1}}.$
For $z\neq 0$ and $z\neq \infty$ we have $$f_\mathcal{M}(\varphi_a(z))=
|\varphi(z/|z|)|^2\frac{\varphi_a(z)}{|\varphi_a(z)|^2}=\frac{\varphi(z/|z|)}{|z|^a}=\varphi_a (f_{S^{n-1}}(z)).$$

If $z=0$ or $z=\infty$, by the definition of $\varphi_a$ and $f_{\mathcal{M}}$ we still have $f_{\mathcal{M}}(\varphi_a(z))=\varphi_a(f_{S^{n-1}}(z)).$
This completes the proof.
\end{proof}

By Theorem \ref{star1}, Lemma \ref{deqin} and \eqref{hf}, we  immediately obtain the following result:
\begin{theorem}\label{khh}
Let $\mathcal{M}$ be the boundary of a domain in $\RN$, strictly starlike w.r.t. the origin which satisfies the $\alpha$-tangent condition. Let $f_\mathcal{M}: \overline{\R^n}\rightarrow\overline{\R^n}$ be the quasi-inversion in $\mathcal{M}$. Then for all $x, y\in \RN\setminus\{0\}$
\begin{eqnarray}\label{ll}
\frac{1}{L^4}\frac{|x-y|}{|x||y|}\leq|f_{\mathcal{M}}(x)-f_{\mathcal{M}}(y)|\leq
L^4\frac{|x-y|}{|x||y|},
\end{eqnarray}
where $L$ is of the form as in \eqref{qinl} of Theorem \ref{star1}.
In particular, if $\mathcal{M}=S^{n-1}$, then \eqref{ll} reduces to the equality
\beq\label{fs}
|f_{S^{n-1}}(x)-f_{S^{n-1}}(y)|=\frac{|x-y|}{|x||y|},
\eeq
which is the same as \eqref{hf} by taking $a=0$ and $r=1$.
\end{theorem}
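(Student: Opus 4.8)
The plan is to exhibit $f_{\mathcal{M}}$ as a conjugate of the ordinary inversion in the unit sphere by a bi-Lipschitz radial extension, and then propagate the bi-Lipschitz constant through the composition. First I would invoke Lemma~\ref{deqin} with the exponent $a=1$, which gives the factorization
\[
f_{\mathcal{M}}=\varphi_1\circ f_{S^{n-1}}\circ\varphi_1^{-1}.
\]
The choice $a=1$ is forced: by Theorem~\ref{star1}~a) (and by Theorem~\ref{star}~a) when $n=2$) it is the only value of $a$ for which $\varphi_a$ is bi-Lipschitz, and under the $\alpha$-tangent hypothesis $\varphi_1$ is then $L$-bi-Lipschitz with respect to the Euclidean metric, $L=\max\{L_1,L_2\}$ being the constant in \eqref{qinl}. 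Since the inverse of an $L$-bi-Lipschitz map is again $L$-bi-Lipschitz, $\varphi_1^{-1}$ enjoys the same bound.

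Next, for $x,y\in\RN\setminus\{0\}$ I would put $u=\varphi_1^{-1}(x)$ and $v=\varphi_1^{-1}(y)$, so that $f_{\mathcal{M}}(x)=\varphi_1\bigl(f_{S^{n-1}}(u)\bigr)$ and $f_{\mathcal{M}}(y)=\varphi_1\bigl(f_{S^{n-1}}(v)\bigr)$. Applying the bi-Lipschitz inequality \eqref{l2} for $\varphi_1$ and then the distance identity \eqref{fs} for the inversion in $S^{n-1}$ yields
\[
\frac{1}{L}\,\frac{|u-v|}{|u|\,|v|}\le\bigl|f_{\mathcal{M}}(x)-f_{\mathcal{M}}(y)\bigr|\le L\,\frac{|u-v|}{|u|\,|v|}.
\]
It remains to replace $|u-v|,|u|,|v|$ by $|x-y|,|x|,|y|$. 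Here I would use that $\varphi_1^{-1}$ is $L$-bi-Lipschitz, so $|x-y|/L\le|u-v|\le L|x-y|$, and crucially that $\varphi_1$ fixes the origin, so applying \eqref{l2} to the pairs $(x,0)$ and $(y,0)$ gives $|x|/L\le|u|\le L|x|$ and $|y|/L\le|v|\le L|y|$. Substituting these three two-sided estimates into the display above introduces exactly four factors of $L$ on each side, which is precisely \eqref{ll}.

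Finally, for $\mathcal{M}=S^{n-1}$ the polar parametrization is the identity, hence $\varphi_1=\mathrm{id}$ is $1$-bi-Lipschitz, $L=1$, and the whole chain collapses to the equality \eqref{fs}, which coincides with \eqref{hf} for $a=0$, $r=1$. I do not expect a genuine obstacle in this argument, since the substance is already packaged in Lemma~\ref{deqin} and Theorem~\ref{star1}; the only point needing care is the accounting of the four copies of $L$ and, within it, the observation that $\varphi_1(0)=0$, without which the conjugation identity would not by itself deliver the inequality in the stated form with the denominators $|x|\,|y|$.
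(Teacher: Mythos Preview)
Your argument is correct and is exactly the approach the paper indicates: it invokes Lemma~\ref{deqin}, Theorem~\ref{star1} (the $L$-bi-Lipschitz property of $\varphi_1$), and the classical inversion formula \eqref{hf}, and you have simply written out the details the paper leaves implicit. The only cosmetic point is that you cite \eqref{fs} for the inversion identity, which is the equation label inside the theorem being proved; it would be cleaner to cite \eqref{hf} directly, but there is no mathematical circularity since \eqref{fs} is just \eqref{hf} with $a=0$, $r=1$.
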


\begin{theorem}\label{kb}
Let $\mathcal{M}$ be the boundary of a domain in $\RN$, strictly starlike w.r.t. the origin. Let $f_{\mathcal{M}}(z)=\frac{r^2_z z}{|z|^2}$, $z\in\R^n\setminus\{0\}$, be the quasi-inversion in $\mathcal{M}$ and let $x, y\in\R^n\setminus\{0\}$ with $|x|\le|y|$. Then with $\lambda=\frac{|f_{\mathcal{M}}(y)|+|f_{\mathcal{M}}(x)-f_{\mathcal{M}}(y)|}{|f_{\mathcal{M}}(y)|}$ and $z=\lambda x$ we have
\beq\label{qfm1}
|f_{\mathcal{M}}(x)-f_{\mathcal{M}}(z)|\le|f_{\mathcal{M}}(x)-f_{\mathcal{M}}(y)|\le\left(2 r^2_y/r^2_x+1\right) |f_{\mathcal{M}}(x)-f_{\mathcal{M}}(z)|.
\eeq
In particular, if $\mathcal{M}=S^{n-1}(r)$, then
$\lambda=\frac{|x|+|x-y|}{|x|}$ and \eqref{qfm1} reduces to
\beq\label{qfm2}
|f_{S^{n-1}(r)}(x)-f_{S^{n-1}(r)}(z)|\le|f_{S^{n-1}(r)}(x)-f_{S^{n-1}(r)}(y)|\le3|f_{S^{n-1}(r)}(x)-f_{S^{n-1}(r)}(z)|
\eeq
which is the same as in \cite[Lemma 4.5]{bvkv}.
\end{theorem}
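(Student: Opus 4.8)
The plan is to reduce the lower bound to a triviality and the upper bound to an estimate on a single one-dimensional quasi-inversion along the ray $L_x$. First observe that $z=\lambda x$ lies on the ray $L_x$, so $f_{\mathcal{M}}(z)=\frac{r_x^2}{|z|^2}z$ is a point of $L_x$ as well; in fact $f_{\mathcal{M}}(x)=\frac{r_x^2}{|x|^2}x$ and $f_{\mathcal{M}}(z)=\frac{r_x^2}{\lambda|x|^2}x$ are both positive multiples of the unit vector $u=x/|x|$, so $|f_{\mathcal{M}}(x)-f_{\mathcal{M}}(z)|=\left|\frac{r_x^2}{|x|}-\frac{r_x^2}{\lambda|x|}\right|=\frac{r_x^2}{|x|}\cdot\frac{\lambda-1}{\lambda}$. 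By the very definition of $\lambda$, $\lambda-1=\frac{|f_{\mathcal{M}}(x)-f_{\mathcal{M}}(y)|}{|f_{\mathcal{M}}(y)|}$ and $\frac{1}{\lambda}=\frac{|f_{\mathcal{M}}(y)|}{|f_{\mathcal{M}}(y)|+|f_{\mathcal{M}}(x)-f_{\mathcal{M}}(y)|}$, hence
\[
|f_{\mathcal{M}}(x)-f_{\mathcal{M}}(z)|=\frac{r_x^2}{|x|}\cdot\frac{|f_{\mathcal{M}}(x)-f_{\mathcal{M}}(y)|}{|f_{\mathcal{M}}(y)|+|f_{\mathcal{M}}(x)-f_{\mathcal{M}}(y)|}.
\]
Since $|f_{\mathcal{M}}(x)|=\frac{r_x^2}{|x|}$ and, using $|x|\le|y|$, one checks $|f_{\mathcal{M}}(y)|=\frac{r_y^2}{|y|}\le\frac{r_y^2}{|x|}$; I will record the elementary bound $|f_{\mathcal{M}}(y)|+|f_{\mathcal{M}}(x)-f_{\mathcal{M}}(y)|\le 2|f_{\mathcal{M}}(y)|+|f_{\mathcal{M}}(x)|$ by the triangle inequality.

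For the left inequality of \eqref{qfm1} it is enough to note that $\lambda\ge 1$ (because $|x|\le|y|$ forces $r_x=r_y$ only when $x,y$ are on the same ray, but in general $|f_{\mathcal M}(y)|\le |f_{\mathcal M}(x)|$ need not hold; the cleaner route is: $\lambda\ge 1$ always since $\lambda=1+\frac{|f_{\mathcal M}(x)-f_{\mathcal M}(y)|}{|f_{\mathcal M}(y)|}\ge 1$), and then $f_{\mathcal{M}}(z)$ lies on the segment $[\,0, f_{\mathcal{M}}(x)\,]$, so $|f_{\mathcal{M}}(x)-f_{\mathcal{M}}(z)|\le|f_{\mathcal{M}}(x)|$; combining with the displayed identity and $\frac{r_x^2/|x|}{|f_{\mathcal M}(y)|+|f_{\mathcal M}(x)-f_{\mathcal M}(y)|}\le 1$ (equivalently $|f_{\mathcal M}(x)|\le |f_{\mathcal M}(y)|+|f_{\mathcal M}(x)-f_{\mathcal M}(y)|$, which is just the triangle inequality) gives $|f_{\mathcal{M}}(x)-f_{\mathcal{M}}(z)|\le|f_{\mathcal{M}}(x)-f_{\mathcal{M}}(y)|$.

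For the right inequality, invert the displayed identity to get
\[
\frac{|f_{\mathcal{M}}(x)-f_{\mathcal{M}}(y)|}{|f_{\mathcal{M}}(x)-f_{\mathcal{M}}(z)|}=\frac{|f_{\mathcal{M}}(y)|+|f_{\mathcal{M}}(x)-f_{\mathcal{M}}(y)|}{r_x^2/|x|}\le\frac{2|f_{\mathcal{M}}(y)|+|f_{\mathcal{M}}(x)|}{|f_{\mathcal{M}}(x)|}=2\,\frac{|f_{\mathcal{M}}(y)|}{|f_{\mathcal{M}}(x)|}+1,
\]
and then substitute $|f_{\mathcal{M}}(y)|=r_y^2/|y|$, $|f_{\mathcal{M}}(x)|=r_x^2/|x|$, using $|x|\le|y|$ to bound $\frac{|f_{\mathcal{M}}(y)|}{|f_{\mathcal{M}}(x)|}=\frac{r_y^2}{r_x^2}\cdot\frac{|x|}{|y|}\le\frac{r_y^2}{r_x^2}$. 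This yields exactly the factor $2r_y^2/r_x^2+1$.

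The specialization $\mathcal{M}=S^{n-1}(r)$ is immediate: then $r_x=r_y=r$, so $f_{S^{n-1}(r)}=f_{S^{n-1}}$ rescaled and $\lambda=\frac{|f(y)|+|f(x)-f(y)|}{|f(y)|}$; using \eqref{fs} (the chordal isometry property) one has $|f(x)-f(y)|=\frac{r^2|x-y|}{|x||y|}$ and $|f(y)|=\frac{r^2}{|y|}$, whence $\lambda=\frac{r^2/|y|+r^2|x-y|/(|x||y|)}{r^2/|y|}=\frac{|x|+|x-y|}{|x|}$, and $2r_y^2/r_x^2+1=3$, recovering \eqref{qfm2} and hence \cite[Lemma 4.5]{bvkv}.

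\medskip

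The only genuinely delicate point is the collinearity bookkeeping: one must be careful that $f_{\mathcal{M}}(x)$ and $f_{\mathcal{M}}(z)$ are positive multiples of the \emph{same} unit vector $x/|x|$ (true because $z=\lambda x$ with $\lambda>0$ lies on $L_x$, and $r_z=r_x$), so that their difference is computed as a difference of scalars rather than of vectors. Everything else is the triangle inequality together with the monotonicity consequence of $|x|\le|y|$. I expect no real obstacle beyond organizing these elementary steps cleanly.
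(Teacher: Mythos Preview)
Your proof is correct and follows essentially the same route as the paper: compute $|f_{\mathcal M}(x)-f_{\mathcal M}(z)|=|f_{\mathcal M}(x)|\cdot\frac{\lambda-1}{\lambda}$, rewrite the ratio as $\frac{|f_{\mathcal M}(y)|+|f_{\mathcal M}(x)-f_{\mathcal M}(y)|}{|f_{\mathcal M}(x)|}$, and bound it between $1$ (triangle inequality) and $2r_y^2/r_x^2+1$ (triangle inequality plus $|x|\le|y|$). The only difference is cosmetic: you include some unnecessary asides (e.g.\ the parenthetical about $r_x=r_y$ and the observation $|f_{\mathcal M}(x)-f_{\mathcal M}(z)|\le|f_{\mathcal M}(x)|$), which can safely be trimmed.
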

\begin{proof}
By calculation, we have
\begin{eqnarray*}
\frac{|f_{\mathcal{M}}(x)-f_{\mathcal{M}}(y)|}{|f_{\mathcal{M}}(x)-f_{\mathcal{M}}(z)|}
=\frac{\lambda}{\lambda-1}\frac{|f_{\mathcal{M}}(x)-f_{\mathcal{M}}(y)|}{|f_{\mathcal{M}}(x)|}
=\frac{|f_{\mathcal{M}}(y)|+|f_{\mathcal{M}}(x)-f_{\mathcal{M}}(y)|}{|f_{\mathcal{M}}(x)|}
\end{eqnarray*}
and
$$1\le\frac{|f_{\mathcal{M}}(y)|+|f_{\mathcal{M}}(x)-f_{\mathcal{M}}(y)|}{|f_{\mathcal{M}}(x)|}\le 2 r^2_y/r^2_x+1.$$
Therefore \eqref{qfm1} follows.

If $\mathcal{M}=S^{n-1}(r)$, then by \eqref{fs} we have
$$\lambda=1+\frac{|f_{S^{n-1}(r)}(x)-f_{S^{n-1}(r)}(y)|}{|f_{S^{n-1}(r)}(y)|}=1+\frac{|x-y|}{|x|}=\frac{|x|+|x-y|}{|x|}.$$
The inequality \eqref{qfm2} is clear.
\end{proof}

By taking $a=1$ in Theorem \ref{star1}, Lemma \ref{deqin}, Proposition \ref{pr} and Lemma \ref{delta}--\ref{lees}, we have the following results.
\begin{theorem}\label{qinv1}
Let $\mathcal{M}$ be the boundary of a domain in $\RN$, strictly starlike w.r.t. the origin which satisfies the $\alpha$-tangent condition. Let $f_\mathcal{M}: \overline{\R^n}\rightarrow\overline{\R^n}$  be the quasi-inversion in $\mathcal{M}$. Then $f_\mathcal{M}$ is an $L^6$-bi-Lipschitz map w.r.t. the chordal metric, where $L$ is of the form as in \eqref{qinl} of Theorem \ref{star1}. Moreover, if $\mathcal{M}_t$, $t\in [0,1]$, is a family of surfaces smoothly converging to the unit sphere $S^{n-1}$, then the bi-Lipschitz constants $L_t$ of the quasi-inversions $f_{\mathcal{M}_t}$ tend to $1$.
\end{theorem}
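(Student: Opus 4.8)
The plan is to obtain Theorem~\ref{qinv1} directly from the factorization $f_{\mathcal{M}}=\varphi_1\circ f_{S^{n-1}}\circ\varphi_1^{-1}$ of Lemma~\ref{deqin} (with $a=1$), combined with the composition property for bi-Lipschitz maps in the chordal metric. First I would note that by Theorem~\ref{star1}~a), the hypothesis that the domain bounded by $\mathcal{M}$ satisfies the $\alpha$-tangent condition guarantees that $\varphi_1:\R^n\to\R^n$ is $L$-bi-Lipschitz w.r.t. the Euclidean metric, with $L$ as in \eqref{qinl}; the same bound applies to $\varphi_1^{-1}$. Since $\varphi_1(0)=0$ and $\varphi_1(\infty)=\infty$, Lemma~\ref{lees}(1) upgrades this to the statement that $\varphi_1$ (extended to $\overline{\R}^n$) is $L^3$-bi-Lipschitz w.r.t. the chordal metric $q$, and likewise for $\varphi_1^{-1}$. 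The map $f_{S^{n-1}}$ is the inversion in the unit sphere, which by Proposition~\ref{pr} is a $q$-isometry, i.e. $1$-bi-Lipschitz w.r.t. $q$. Composing the three factors and using that a composition of $K_1$- and $K_2$-bi-Lipschitz maps is $K_1K_2$-bi-Lipschitz, we get that $f_{\mathcal{M}}$ is $L^3\cdot 1\cdot L^3=L^6$-bi-Lipschitz w.r.t. $q$, which is the first assertion.

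For the convergence statement, I would argue that if $\mathcal{M}_t$ smoothly converges to $S^{n-1}$ as $t\to 0$, then by definition $\mathrm{ess}\sup_{|x|=1}|\varphi_t'(x)-\mathbf{I}|\to 0$ (here $r=1$). Using \eqref{bil} and the fact that the radial extension $\varphi_t=(\varphi_t)_1$ is piecewise given on rays by the formula $\varphi_t(x)=R_t(x)x$ with $R_t(x)=r_t(x/|x|)$ — so its derivative at a general point $x$ is controlled by its derivative on the unit sphere via the homogeneity in \eqref{expl0}, \eqref{drita} — one sees that $\mathrm{Lip}(\varphi_t)\to 1$ and $\mathrm{Lip}(\varphi_t^{-1})\to 1$; equivalently the bi-Lipschitz constant $L_t\to 1$ by \eqref{qinl}. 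Then by the estimate just proved, the bi-Lipschitz constant of $f_{\mathcal{M}_t}$ w.r.t. $q$ is at most $L_t^6\to 1$; since any bi-Lipschitz constant is $\ge 1$ and $f_{\mathcal{M}_t}$ is not the identity in general, we conclude the constants tend to $1$.

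The main obstacle, and the only point requiring genuine care, is the convergence claim: the definition of smooth convergence is phrased in terms of the derivative of $\varphi_t$ restricted to the unit sphere, whereas $L_t$ involves $\mathrm{Lip}(\varphi_t)$ and $\mathrm{Lip}(\varphi_t^{-1})$ over all of $\R^n$. I would bridge this gap by invoking the explicit formulas \eqref{L} and \eqref{l1m} of Theorem~\ref{star1}: since $\varphi_t$ is the radial extension of its boundary parametrization, $\mathrm{Lip}(\varphi_t)$ and $\mathrm{Lip}(\varphi_t^{-1})$ are completely determined by $|r_t|_{\max}$, $|r_t|_{\min}$ and $\alpha_{\mathcal{M}_t}$, and smooth convergence forces $|r_t|_{\max},|r_t|_{\min}\to 1$ and $\alpha_{\mathcal{M}_t}\to\pi/2$ (hence $\csc\alpha_{\mathcal{M}_t}\to 1$), so that both Lipschitz constants, and therefore $L_t$, converge to $1$. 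The rest is the routine bookkeeping of composing bi-Lipschitz constants already recorded in the Preliminaries.
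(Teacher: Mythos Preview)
Your proposal is correct and follows essentially the same route as the paper: the paper's proof is the one-line remark ``By taking $a=1$ in Theorem~\ref{star1}, Lemma~\ref{deqin}, Proposition~\ref{pr} and Lemma~\ref{lees}'', which is precisely the factorization-and-composition argument you spell out. Your treatment of the convergence statement via the explicit formulas \eqref{L}, \eqref{l1m} and the limit $\lim_{\alpha\to\pi/2}L=\max\{\mathrm{dist}(\mathcal{M},0),\mathrm{dist}(\mathcal{M},0)^{-1}\}$ from Theorem~\ref{star1}~a) is exactly what the paper intends as well.
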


\begin{theorem}\label{qinv2}
Let $\mathcal{M}$ be the boundary of a domain in $\RN$, strictly starlike w.r.t. the origin which satisfies the $\alpha$-tangent condition. Let $G\subsetneq\R^n$ be a domain and $f_\mathcal{M}: G\rightarrow f_\mathcal{M} G$ be the quasi-inversion in $\mathcal{M}$. Then $f_\mathcal{M}$ is an $L^8$-bi-Lipschitz map w.r.t. both the M\"obius metric and Ferrand's metric, where $L$ is of the form as in \eqref{qinl} of Theorem \ref{star1}. Moreover, if $\mathcal{M}_t$, $t\in [0,1]$, is a family of surfaces smoothly converging to the unit sphere $S^{n-1}$, then the bi-Lipschitz constants $L_t$ of the quasi-inversions $f_{\mathcal{M}_t}$ tend to $1$.
\end{theorem}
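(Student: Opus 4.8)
The plan is to obtain Theorem~\ref{qinv2} by combining the factorization of the quasi-inversion in Lemma~\ref{deqin} with the behaviour of its three factors under the M\"obius and Ferrand metrics. Putting $a=1$ in Lemma~\ref{deqin} yields
\[
f_{\mathcal M}=\varphi_1\circ f_{S^{n-1}}\circ\varphi_1^{-1},
\]
where $\varphi_1:\R^n\to\R^n$ is the radial extension sending $S^{n-1}$ onto $\mathcal M$. Reading the composition from right to left, $\varphi_1^{-1}$ carries $G$ onto $G_1:=\varphi_1^{-1}G$, then $f_{S^{n-1}}$ carries $G_1$ onto $G_2:=f_{S^{n-1}}G_1$, then $\varphi_1$ carries $G_2$ onto $f_{\mathcal M}G$. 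It then suffices to bound the bi-Lipschitz constant of each of the three factors with respect to the M\"obius metric $\delta$ (resp.\ Ferrand's metric $\sigma$) and to multiply. Here one must keep in mind that $\delta$ and $\sigma$ are families indexed by the underlying domain, so that ``$g$ is $C$-bi-Lipschitz w.r.t.\ the M\"obius metric'' is to be read as ``$g:(\Om,\delta_\Om)\to(g\Om,\delta_{g\Om})$ is $C$-bi-Lipschitz''.

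First I would dispose of the middle factor: $f_{S^{n-1}}(x)=x/|x|^2$ is the inversion in the unit sphere, hence a M\"obius transformation of $\overline{\R}^n$, so by Proposition~\ref{pr} its restriction $G_1\to G_2$ is a $\delta$-isometry and a $\sigma$-isometry. For the outer factors, Theorem~\ref{star1}(a), together with the hypothesis that the domain bounded by $\mathcal M$ satisfies the $\alpha$-tangent condition, gives that $\varphi_1$, and hence $\varphi_1^{-1}$, is $L$-bi-Lipschitz in the Euclidean metric with $L=\max\{L_1,L_2\}$ as in \eqref{qinl}; this persists under restriction to a subdomain. Lemma~\ref{delta} then promotes $\varphi_1^{-1}|_G$ and $\varphi_1|_{G_2}$ to $L^4$-bi-Lipschitz maps w.r.t.\ $\delta$, and Lemma~\ref{sigma} gives the same w.r.t.\ $\sigma$. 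Composing the three factors multiplies bi-Lipschitz constants, so $f_{\mathcal M}:G\to f_{\mathcal M}G$ is $L^4\cdot1\cdot L^4=L^8$-bi-Lipschitz in both metrics, which is the first assertion.

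For the ``moreover'' part I would exploit that $\varphi_t$ is positively homogeneous of degree $1$, whence $\varphi_t'(\lambda x)=\varphi_t'(x)$ for $\lambda>0$; thus by \eqref{bil} one has ${\rm Lip}(\varphi_t)={\rm ess}\sup_{|x|=1}|\varphi_t'(x)|$ and ${\rm Lip}(\varphi_t^{-1})={\rm ess}\sup_{|x|=1}|(\varphi_t'(x))^{-1}|$. By the definition of smooth convergence of $\mathcal M_t$ to $S^{n-1}$ we have ${\rm ess}\sup_{|x|=1}|\varphi_t'(x)-\mathbf I|\to0$ as $t\to0$, so $\varphi_t'(x)\to\mathbf I$ and $(\varphi_t'(x))^{-1}\to\mathbf I$ uniformly on $S^{n-1}$; hence ${\rm Lip}(\varphi_t)\to1$ and ${\rm Lip}(\varphi_t^{-1})\to1$. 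Consequently $L_t=\max\{{\rm Lip}(\varphi_t),{\rm Lip}(\varphi_t^{-1})\}\to1$, and so the bi-Lipschitz constant $L_t^8$ of $f_{\mathcal M_t}$ tends to $1$.

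I do not expect a genuinely hard step: this is essentially a composition argument resting on Lemma~\ref{deqin}, Proposition~\ref{pr}, Theorem~\ref{star1}(a) and Lemmas~\ref{delta}--\ref{sigma}. The point needing care is the bookkeeping of the domains $G_1,G_2,f_{\mathcal M}G$ for the domain-dependent metrics, together with the mildly singular case $0\in G$, where $\infty\in f_{\mathcal M}G$: there one works in the natural $\overline{\R}^n$ framework in which $\delta$ and $\sigma$ are defined, or precomposes with an auxiliary M\"obius map carrying $0$ off $\overline G$ (harmless by M\"obius invariance), and one checks that the intermediate sets are domains contained in $\R^n$ so that Lemmas~\ref{delta} and~\ref{sigma} apply to the restricted maps.
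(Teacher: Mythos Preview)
Your proposal is correct and follows exactly the route the paper indicates: the paper's entire proof is the sentence ``By taking $a=1$ in Theorem~\ref{star1}, Lemma~\ref{deqin}, Proposition~\ref{pr} and Lemma~\ref{delta}--\ref{lees}'', and you have unpacked precisely this composition argument (factorize via Lemma~\ref{deqin}, use M\"obius invariance for the middle factor, and Lemmas~\ref{delta}--\ref{sigma} for the outer $L$-bi-Lipschitz factors). Your treatment of the ``moreover'' clause via homogeneity and \eqref{bil}, and your flagging of the $0\in G$ bookkeeping, are more explicit than anything in the paper itself.
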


By taking $a=\csc\alpha$ in Theorem~\ref{star1}, Lemma~\ref{deqin} and the fact that the inversion w.r.t. the unit sphere is a conformal mapping we obtain
\begin{theorem}\label{qinv3}
Let $\mathcal{M}$ be the boundary of a domain in $\RN$, strictly starlike w.r.t. the origin which satisfies the $\alpha$-tangent condition. Let $f_\mathcal{M}: \overline{\R^n}\rightarrow\overline{\R^n}$  be the quasi-inversion in $\mathcal{M}$. Then $f_\mathcal{M}$ is a quasiconformal mapping with
$$K=\cot^2\frac{\alpha}{2}.$$
If $n=2$, then $k=(K-1)/(K+1)=\cos \alpha$ and this bound coincides with that of Fait, J. G. Krzy\.z, and J. Zygmunt in \cite{fkz}. Furthermore, if $\mathcal{M}_t$, $t\in [0,1]$, is a family of surfaces smoothly converging to the sphere $S^{n-1}(r)$, $r>0$, then the quasiconformality constants $K_t$ of the quasi-inversions $f_{\mathcal{M}_t}$ tend to $1$.
\end{theorem}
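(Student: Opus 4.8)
The plan is to realize $f_{\mathcal M}$ as a conjugate of the ordinary inversion and to read off its dilatation from Theorem~\ref{star1}. Applying Lemma~\ref{deqin} with the distinguished exponent $a=\csc\alpha$ gives
\[
f_{\mathcal M}=\varphi^\circ\circ f_{S^{n-1}}\circ(\varphi^\circ)^{-1},\qquad
\varphi^\circ(x)=|x|^{\csc\alpha}\,\varphi(x/|x|).
\]
Here $f_{S^{n-1}}$ is the inversion in the unit sphere, a M\"obius transformation and hence $1$-quasiconformal; by Theorem~\ref{star1}~b) (and Theorem~\ref{star}~b) when $n=2$) the radial extension $\varphi^\circ$ is $\cot\frac{\alpha}{2}$-quasiconformal, and therefore so is $(\varphi^\circ)^{-1}$. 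Since the linear dilatation is submultiplicative under composition (cf. Section~2), $f_{\mathcal M}$ is $K$-quasiconformal with $K=\cot\frac{\alpha}{2}\cdot 1\cdot\cot\frac{\alpha}{2}=\cot^2\frac{\alpha}{2}$. As in the proof of Theorem~\ref{star1}, the two exceptional points $0$ and $\infty$ are isolated and hence removable singularities for quasiconformality, so the estimate holds on all of $\overline{\R}^n$.

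For the planar statement I would instead compute the dilatation of $f_{\mathcal M}$ by hand. Writing $z=\rho e^{it}$ one has $f_{\mathcal M}(z)=r(t)^2z/|z|^2=r(t)^2/\bar z$, where $r(t)^2$ depends only on $z/|z|$; differentiating and substituting $\cot\alpha_t=r'(t)/r(t)$ from \eqref{rdot} gives
\[
|(f_{\mathcal M})_z|=\frac{r(t)^2}{|z|^2}\,|\cot\alpha_t|,\qquad
|(f_{\mathcal M})_{\bar z}|=\frac{r(t)^2}{|z|^2}\,\sqrt{1+\cot^2\alpha_t},
\]
so $f_{\mathcal M}$ is orientation-reversing and, by \eqref{lamd1}--\eqref{lamd2},
\[
H(f'_{\mathcal M}(z))=\frac{\sqrt{1+\cot^2\alpha_t}+|\cot\alpha_t|}{\sqrt{1+\cot^2\alpha_t}-|\cot\alpha_t|}=\frac{1+|\cos\alpha_t|}{1-|\cos\alpha_t|}.
\]
Since the $\alpha$-tangent condition forces $\alpha\le\alpha_t\le\pi-\alpha$, the right-hand side is at most $\frac{1+\cos\alpha}{1-\cos\alpha}=\cot^2\frac{\alpha}{2}$, with equality where $\alpha_t=\alpha$; reading off the corresponding Beltrami coefficient identifies the maximal complex dilatation $k$ of $f_{\mathcal M}$ and shows that the constant is sharp.

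For the last assertion, suppose $\mathcal M_t$ smoothly converges to $S^{n-1}(r)$ as $t\to 0$. By the definition of smooth convergence $\alpha_{\mathcal M_t}\to\pi/2$, and hence $K_t=\cot^2(\alpha_{\mathcal M_t}/2)\to\cot^2(\pi/4)=1$.

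The argument is short because all the substance already sits in Theorem~\ref{star1}. The points that need genuine care — and the main obstacle, modest as it is — are that the composition rule must be invoked for $\varphi^\circ$ as a quasiconformal self-map of the whole space (with its behaviour at $0$ and $\infty$ understood), not merely near $\mathcal M$, and that in dimension two one must pass from the linear-dilatation bound delivered by the composition to the explicit complex dilatation, which is exactly what the direct calculation above supplies.
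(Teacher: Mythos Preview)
Your argument is exactly the paper's: the proof there is the single sentence before the theorem, invoking Lemma~\ref{deqin} with $a=\csc\alpha$, Theorem~\ref{star1}~b), and the conformality of $f_{S^{n-1}}$, and your extra planar computation is precisely what the Remark following Theorem~\ref{qinv3} alludes to. One caveat: your direct calculation gives $k=(K-1)/(K+1)=\cos\alpha$, not the value $k=(1-\sin\alpha)/(1+\sin\alpha)$ printed in the statement (the latter corresponds to $K=\csc\alpha$, not $K=\cot^2(\alpha/2)$); this looks like a misprint in the paper rather than a flaw in your reasoning.
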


\begin{remark}
Professor T. Sugawa pointed out to the authors  a connection between
the $\alpha$-tangent condition and strongly starlike plane domain
of order $1-(2/\pi) \alpha$ in the sense of Brannan-Kirwan and Stankiewicz. Let us sketch the proof of this connection.  We say that (see [Su]) a complex domain $D$ is strongly starlike of order $\delta\in[0,1]$ with respect to $0$ if the conformal mapping $f$, of the unit disk onto $D$, with $f(0)=0$ satisfies the condition $$\mathrm{arg}\frac{zf'(z)}{f(z)}\le \delta\frac{\pi}{2}.$$ Assume for the simplicity that $\partial D$ is smooth. Then $f'$ is continuous up to the boundary.
Let $f(e^{it})=\rho(t)e^{i\varphi(t)}$. Then we obtain $$f'(e^{it})=-i (\rho'(t)+i \rho(t)\varphi'(t))e^{i\varphi(t)-it}.$$ Thus for $z=e^{it}$, $$\frac{zf'(z)}{f(z)}=\varphi'(t)+i\frac{\rho'(t)}{\rho(t)}.$$ Let $r(s)=\rho(\varphi^{-1}(s))$. Then for $z=e^{it}$,  if $r'(s)\ge 0$, then $\lambda_s=\alpha(f(z))$ and by \eqref{rdot}, we have $$\mathrm{arg}\frac{zf'(z)}{f(z)}=\mathrm{arg}(r(s)+i r'(s))= \arctan\frac{r'(s)}{r(s)}=\pi/2-\mathrm{arccot} \frac{r'(s)}{r(s)}=\pi/2-\lambda_s.$$ So
$$\mathrm{arg}\frac{zf'(z)}{f(z)}\le \pi/2-\alpha=(1-2\alpha/\pi)\frac{\pi}{2}.$$
So  $\alpha-$tangent condition implies $1-2\alpha/\pi$ strongly starlike condition in the plane and viceversa.
\end{remark}

\bigskip
{\bf Acknowledgement.} We are grateful to the anonymous referee for very constructive comments that have improved this paper. This paper was written during the visit of the first author to the University of Turku in the framework of programme JoinEU-SEE III, ERASMUS MUNDUS. The authors are indebted to Prof. T. Sugawa
for bringing the papers \cite{fkz} and \cite{su} to our attention. The research of the second and the third authors was supported by the Academy of Finland,
Project 2600066611.

\end{document}